\newcommand{\ZZ}{\mathbb{Z}}
\newcommand{\RR}{\mathbb{R}}
\newcommand{\QQ}{\mathbb{Q}}
\newcommand{\CC}{\mathbb{C}}
\newcommand{\PP}{\mathbb{P}}
\newcommand{\OO}{\mathcal{O}}
\newtheorem{question}{Question}
\newtheorem*{question*}{Question}
\newcommand{\spec}{\text{Spec}\hspace{0.5mm}
}
\newcommand{\McS}{\mathcal{S}}
\newcommand{\ra}{\rightarrow}
\newcommand{\Alb}{\textnormal{Alb}}
\newcommand{\McH}{\mathcal{H}}
\newcommand{\Pic}{\textnormal{Pic}}
\newcommand{\Nef}{\textnormal{Nef}}
\newcommand{\Endo}{\textnormal{End}}
\newcommand{\NE}{\overline{\textnormal{NE}}}
\newcommand{\Num}{\equiv_\textnormal{num}}
\newcommand{\Lin}{\equiv_\textnormal{lin}}
\newcommand{\pp}{\prime}
\newcommand{\Qb}{\overline{\mathbb{Q}}}
\newcommand{\Send}{\textnormal{SEnd}}
\newcommand{\Iamp}{\textnormal{IAmp}}
\newtheorem{theorem}{Theorem}[section]
\newtheorem*{theorem*}{Theorem}
\newtheorem*{conjecture*}{Conjecture}
\newtheorem{conjecture}{Conjecture}
\newtheorem*{proposition*}{Proposition}
\newtheorem{corollary}{Corollary}[theorem]
\newtheorem{lemma}[theorem]{Lemma}
\newtheorem*{lemma*}{Lemma}
\newtheorem{definition}[theorem]{Definition}
\newtheorem{proposition}[theorem]{Proposition}
\newtheorem{example}{Example}
\newtheorem*{example*}{Example}
\newtheorem{remark}[theorem]{Remark}
\newtheorem*{remark*}{Remark}
\newtheorem*{Ingredients*}{Ingredients of the minimal model program for $K_X$}
\newtheorem{Case Q}{Case Q}
\newif\ifhascomments \hascommentstrue
  \newcommand{\matt}[1]{{\color{red}[[\ensuremath{\spadesuit\spadesuit\spadesuit} #1]]}}
  \newcommand{\brett}[1]{{\color{blue}[[\ensuremath{\heartsuit\heartsuit\heartsuit} #1]]}}
  \newcommand{\matt}[1]{}
  \newcommand{\brett}[1]{}
\author{Brett Nasserden\\ Department of Mathematics, Western University\\ \href{mailto:bnasserd@uwo.ca}{bnasserd@uwo.ca} }
\title{On the realizability of arithmetic degrees of morphisms}
\date{}
\begin{document}

\maketitle
\begin{abstract}
The Kawaguchi-Silverman conjecture relates two different invariants of a surjective endomorphism, the dynamical and arithmetic degrees. As the Kawaguchi-Silverman conjecture is only meaningful when a morphism has a Zariski dense orbit, it has no content for varieties with positive Kodaira dimension. A generalization of the Kawaguchi-Silverman conjecture which is meaningful in positive Kodaira dimension is the so called sAND conjecture, which involves the set of "small" arithmetic degrees. Kawaguchi and Silverman showed that a small arithmetic degree is the modulus of an eigenvalue of $f^*\colon N^1(X)\rightarrow N^1(X)$. In this article we investigate which possible eigenvalues arise as an arithmetic degree. We show that surjective endomorphisms of abelian varieties may have eigenvalues which are not arithmetic degrees. Conversely, we show that every eigenvalue of a surjective endomorphism of a toric variety is an arithmetic degree using the minimal model program. Finally, we investigate how the minimal model program may be applied to study this realizability question for varieties that admit an int-amplified endomorphism.        
\end{abstract}

\section{Introduction}
Let $X$ be a normal projective variety defined over $\overline{\QQ}$ and $f\colon X\ra X$ a surjective endomorphism. In this article we study the dynamics of $f$ motivated by the Kawaguchi-Silverman conjecture. The premier notion of complexity of a surjective endomorphism on an algebraic variety is its dynamical degree. 
\begin{equation}
		\lambda_1(f)=\lim_{n\ra \infty}((f^n)^*H \cdot H^{\dim X-1})^{\frac{1}{n}},\end{equation}
This limit exists and is independent of $H$ and the product is the intersection product. Alternatively we may define $\lambda_1(f)$ as the \emph{spectral radius} of the pull-back function $f^*\colon N^1(X)\ra N^1(X)$ on the Neron-Severi space of $X$. See \cite{MR2753603, Bellon1999,MR1760844,MR4048444} for earlier uses and applications.

From an arithmetic perspective, there is a natural notion of dynamical complexity defined by Kawaguchi and Silverman. Fix a point $P
\in X(\overline{\QQ})$. For simplicity, let everything be defined over $\QQ$ and choose a very ample divisor $H$ $X$ defined by a closed embedding $\phi_H\colon X\ra \PP^N$. For a point $x=[x_0:\ldots:x_N]\in \PP^N$ define $h_{\PP^N}(x)=\log(\max_{0\leq i\leq N}\{\vert x_i\vert\})$. Then the function $h_H(P)=h_{\PP^N}(\phi_H(P)$ gives a measure of the \emph{arithmetic} complexity of $P$ and is called the height of $P$ with respect to $H$. To measure the arithmetic complexity of $P$ with respect to $f$ we measure how the height of $P$ as we apply $f$ to $P$. This gives rise to the notion of the arithmetic degree of a point as defined by Kawaguchi and Silverman.
\begin{equation}
		\alpha_f(P)=\lim_{n\ra\infty}h_H^+(f^n(P))^\frac{1}{n},
	\end{equation}
and $h_H^+=\max\{1,h_H(P)\}$. This limit exists and is independent of the choice of $H$.

The Kawaguchi-Silverman conjecture is a type of Ergodic theorem in arithmetic dynamics that predicts a relationship between these two invariants. 
	\begin{conjecture}[\cite{MR4080251}]\label{conj:KS}
		Let $X$ be a normal projective variety defined over $\bar{\QQ}$ and let $f\colon X\ra X$ be a surjective endomorphism. Let $P\in X(\bar{\QQ})$ and suppose that the orbit $\OO_f(P):=\{P,f(P),f^2(P),...\}$ is Zariski dense in $X$. Then $\alpha_f(P)=\lambda_1(f)$. 
	\end{conjecture}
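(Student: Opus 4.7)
The plan is to split Conjecture~\ref{conj:KS} into the two inequalities $\alpha_f(P)\leq \lambda_1(f)$ and $\alpha_f(P)\geq \lambda_1(f)$, where the first is unconditional (no density hypothesis needed) and the second is where the Zariski-density assumption must enter.

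For the upper bound I would use the height machine. Fix an ample $H$ on $X$. Because $\lambda_1(f)$ is the spectral radius of $f^*$ on $N^1(X)_\RR$, for every $\epsilon>0$ one has an inequality of the form $f^{n*}H\leq C_\epsilon(\lambda_1(f)+\epsilon)^n H$ in the nef cone for all $n\geq 0$. Functoriality of heights, together with a standard positivity bound relating $h_H^+$ to any nef representative, then yields $h_H^+(f^n(P))\leq C_\epsilon'(\lambda_1(f)+\epsilon)^n h_H^+(P)+O(1)$, and taking $n$-th roots and sending $\epsilon\to 0$ gives $\alpha_f(P)\leq \lambda_1(f)$.

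For the lower bound the cleanest situation is the \emph{polarized} case, where $f^*H\Lin qH$ for some $q>1$ and some ample $H$. Here Tate's construction produces a canonical height $\hat{h}_{f,H}(P)=\lim_{n\to\infty}q^{-n}h_H(f^n(P))$ satisfying $\hat{h}_{f,H}(f(P))=q\,\hat{h}_{f,H}(P)$. A Northcott-type argument shows that $\hat{h}_{f,H}(P)>0$ precisely when the orbit $\OO_f(P)$ is Zariski dense, and this directly forces $\alpha_f(P)=q=\lambda_1(f)$. My strategy in general would be to combine the minimal model program with existing known cases (abelian varieties, toric varieties, surfaces) to reduce to the polarized situation, by running an equivariant MMP for $f$ and, on the output, isolating a piece on which $f$ (or an iterate of it) becomes polarized.

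The main obstacle, and the reason this conjecture remains open in full generality, is that when $f$ is not polarized the spectral decomposition of $f^*$ on $N^1(X)_\RR$ may have several eigenvalues of modulus strictly less than $\lambda_1(f)$. In principle the height of $f^n(P)$ could align with a smaller-eigenvalue eigendirection rather than with the $\lambda_1$-eigendirection, and it is here that Zariski density of $\OO_f(P)$ must be used to rule out misalignment. Doing so appears to require either a new dynamical-Mordell--Lang type ingredient or a Kodaira-dimension-by-Kodaira-dimension MMP reduction. This is precisely the realizability question the present article investigates: understanding exactly which eigenvalues can and cannot appear as arithmetic degrees is a quantitative refinement of the obstacle above.
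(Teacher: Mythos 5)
The statement you were given is Conjecture~\ref{conj:KS}, which the paper does not prove --- it is stated as an open conjecture and serves as motivation. You correctly recognize this, and what you have written is an accurate survey of the state of the art rather than a proof: the unconditional upper bound $\alpha_f(P)\le\lambda_1(f)$ is indeed due to Kawaguchi--Silverman and Matsuzawa (cf.\ Proposition~\ref{prop:propofalpha}), the polarized case does follow from the Call--Silverman canonical height and Northcott, and the genuine difficulty in the non-polarized case is exactly the possible alignment of the orbit with a sub-maximal eigendirection of $f^*$ --- which is precisely the theme of this paper.

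One small inaccuracy worth flagging: you write that $\hat{h}_{f,H}(P)>0$ ``precisely when'' $\OO_f(P)$ is Zariski dense. Over a number field, Northcott gives that $\hat{h}_{f,H}(P)>0$ if and only if $P$ is \emph{not preperiodic}; a non-preperiodic point can still have orbit closure a proper subvariety, so the ``only if'' direction of your claim is false. What one actually uses for the polarized case is the single implication: dense orbit $\Rightarrow$ not preperiodic $\Rightarrow$ $\hat{h}_{f,H}(P)>0$ $\Rightarrow$ $\alpha_f(P)=q=\lambda_1(f)$. With that correction your sketch is a fair account of the known partial results, but it is not, and does not claim to be, a proof of the conjecture.
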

The conjecture states that when a point $P$ has a complex forward orbit, its arithmetic complexity is the same as the geometric complexity. See \cite{MR4080251,MR4068299,MR3189467,MR3871505,1802.07388} for some recent work in this area.

The requirement of a dense forward orbit makes the conjecture interesting only when $\kappa(X)\leq 0$. When $\kappa(X)>0$ the existence of the Iitaka fibration over a positive dimensional base makes the existence of a dense forward orbit impossible. In light of this the following conjecture was proposed, which is interesting for all Kodaira dimensions and implies Conjecture (\ref{conj:KS}).

	\begin{conjecture}[The sAND conjecture {\cite[Conjecture 1.4]{2002.10976}}]\label{conj:sAND}
		Let $X$ be a normal projective variety and $f\colon X\ra X$ a surjective endomorphism. Let
		\[\McS(X,f,N)=\{P\in X(K):[K:\QQ]\leq N,\alpha_f(P)<\lambda_1(f)\}\]
		Then  $\McS(X,f,N)$ is not Zariski dense in $X$. 
	\end{conjecture}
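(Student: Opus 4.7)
The plan is to leverage the Kawaguchi-Silverman theorem recalled in the abstract: every arithmetic degree $\alpha_f(P)$ is the modulus of an eigenvalue of $f^*\colon N^1(X)\ra N^1(X)$. Thus $\McS(X,f,N)$ decomposes as a finite disjoint union, indexed by eigenvalues $\lambda$ of $f^*$ with $|\lambda|<\lambda_1(f)$, of strata $\McS_\lambda = \{P\in \McS(X,f,N) : \alpha_f(P)=|\lambda|\}$. Since a finite union of proper closed subvarieties is still a proper closed subvariety, it suffices to show each $\McS_\lambda$ is not Zariski dense.

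First I would perform an $f$-equivariant MMP reduction along the lines suggested by the paper's later sections. Running the MMP on an endomorphism with sufficient positivity (e.g.\ int-amplified) produces a tower of contractions and fibrations relating $(X,f)$ to varieties of one of three basic structural types: Fano-type, Calabi-Yau-type, or carrying an $f$-equivariant Iitaka fibration with positive-dimensional base. In the last case one pushes the sAND question down to the base and studies the residual dynamics on the generic fiber, where $\kappa=0$. In the Fano-type case, int-amplification permits the use of canonical heights built from the dominant eigenclass, together with Northcott on bounded-degree points.

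The core step is then to construct, for each subdominant eigenvalue $\lambda$ of $f^*$, a (possibly indefinite) canonical height $\hat{h}_\lambda$ obtained by telescoping along an $f^*$-invariant subspace $V_\lambda\subset N^1(X)_{\RR}$ on which $f^*$ has spectral radius $|\lambda|$. The goal is to design these heights so that $\alpha_f(P)\leq |\lambda|$ forces the vanishing of $\hat{h}_{\lambda'}(P)$ for every $\lambda'$ with $|\lambda'|>|\lambda|$; Northcott would then confine $\McS_\lambda$ to the common zero locus of finitely many nontrivial height functions, a proper closed subvariety.

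The hard part will be constructing and controlling these subdominant canonical heights in the absence of global positivity. Outside the $\kappa\leq 0$ setting, the $f^*$-invariant complement to the dominant eigenspace in $N^1(X)_{\RR}$ need not carry any class with a workable intersection-theoretic meaning, and the telescoping series need not converge. A second obstacle, highlighted by the paper itself, is that not every eigenvalue of $f^*$ is realized as an arithmetic degree on abelian varieties: some strata $\McS_\lambda$ are empty while the arithmetic meaning of others is subtle, so a uniform treatment is doomed. The realizability results developed later in the paper therefore appear to be a prerequisite for, rather than a consequence of, a serious attack on sAND along this route.
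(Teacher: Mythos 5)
The statement you were asked to prove is a \emph{conjecture}, explicitly labeled as such in the paper and attributed to \cite{2002.10976}; it is an open problem, and the paper makes no claim to prove it in general. The paper only establishes special cases --- for instance Theorem \ref{thm:toricsAND} proves sAND for equivariant surjective endomorphisms of $\QQ$-factorial toric varieties. So there is nothing in the paper for your argument to be ``the same as'' or ``different from'': you have been asked to review your attempt against a proof that does not exist, and any complete ``proof'' of the statement as given would be a major breakthrough. Your last paragraph already signals that you sense this, but the framing of the writeup as a proof outline is misleading.

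Beyond that framing issue, several concrete steps in the sketch fail. First, the canonical-height machinery you invoke (Theorem \ref{thm:canheight1}, from Kawaguchi--Silverman / Call--Silverman) only constructs $\hat{h}_D$ by a convergent telescoping sum when $f^*D \equiv \lambda D$ with $\lambda > \sqrt{\lambda_1(f)}$; for subdominant eigenvalues in the range $1 < |\lambda| \le \sqrt{\lambda_1(f)}$ the series need not converge and no canonical height is available by these methods, so ``a canonical height $\hat{h}_\lambda$ for each subdominant eigenvalue'' is not something you can simply posit. Second, Northcott applies to height functions attached to \emph{ample} classes; it is false for the indefinite or merely nef canonical heights you would be working with, so the claim that Northcott ``confines $\McS_\lambda$'' has no justification. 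Third --- and this is the fatal structural error --- the zero locus of a nontrivial canonical height is not in general a proper closed subvariety, nor even non-dense: the Néron--Tate height on an abelian variety vanishes precisely on the torsion points, which form a Zariski-dense set. So even granting the heights and the vanishing implication, the final inference from ``contained in the common zero locus of finitely many canonical heights'' to ``contained in a proper closed subvariety'' is simply wrong. Finally, your opening reduction (``it suffices to show each stratum $\McS_\lambda$ is not Zariski dense'') presupposes each stratum lies in a proper closed set before any closedness has been established; a finite union of non-dense sets is non-dense, so the reduction is formally fine, but it does nothing to address any of the real difficulties above.
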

Here sAND means small arithmetic non-density. It is easy to show that the sAND conjecture implies the Kawaguchi-Silverman conjecture.

The sAND conjecture predicts that all the small arithmetic degree are non-dense. In this article we study the set of possible values for the function $\alpha_f(P)$. In \cite{MR4068299} it was shown that $\alpha_f(P)=\vert \mu\vert$ where $\mu$ is some eigenvalue of $f^*\colon N^1(X)\ra N^1(X)$. We introduce the following definition. \begin{definition}\label{def:potdeg}
	Let $X$ be a normal projective variety defined over $\overline{\QQ}$ and let $f\colon X\ra X$ be a surjective endomorphism and let $\mu$ be an eigenvalue of $f^*\colon N^1(X)\ra N^1(X)$ with $\vert \mu\vert > 1$. 
	
	\begin{enumerate}
	    \item We call such an eigenvalue a \textbf{potential arithmetic degree} or \textbf{potentially arithmetic}.
	    \item  If there is a point $P\in X(\overline{\QQ})$ with $\alpha_f(P)=\vert \mu\vert $ then we call $\mu$ an \textbf{arithmetic degree} or say $\mu$ is \textbf{realizable as an arithmetic degree} or that $\mu$ is \textbf{arithmetic}
	\end{enumerate}. 
	If every potentially arithmetic eigenvalue is arithmetic, then we say that $f$ has \textbf{arithmetic eigenvalues}.
\end{definition}
We choose to exclude the points with $\alpha_f(P)=1$ because this typically occurs even when $1$ is not an eigenvalue for the action of $f^*$ on $\Nef(X)_\RR$. We note that the set of points with $\alpha_f(P)=1$ is extremely interesting from the point of view of the Kawaguchi-Silverman conjecture and deserves further study. We ask the following question.
\begin{question}[Main question]\label{ques:ques2} 
	Let $X$ be a normal projective variety defined over $\overline{\QQ}$ and let $f\colon X\ra X$ be a surjective endomorphism. Is every potential arithmetic degree of $f$ realizable as an arithmetic degree? In other words, is every eigenvalue $\mu$ of $f^*$ with $\vert \mu\vert>1$ arithmetic? 
\end{question}

\begin{subsection}{Our results:}
We first answer question \ref{ques:ques2}.

\begin{theorem}[\ref{thm:MainCounterexample}]
		For each $g\in 2\ZZ_{>0}$ there is an abelian variety $A$  of dimension $g$ defined over a number field $K$ with $\rho(A)=3$ equipped with a surjective endomorphism $f\colon A\ra A$ that has the following properties.
		\begin{enumerate}
			\item $f^*\colon N^1(A)_\QQ\rightarrow N^1(A)_\QQ$ has eigenvalues $a^2>ab>b^2>0$ for some $a,b,c\in \mathbb{Z}$.
			\item $\alpha_f(P)=a^2$ for all $P\notin A(\overline{K})_{\textnormal{tors}}$. In particular, $\alpha_f(P)\in \{1,a^2\}$.
			\item The eigendivisors of $a^2,b^2$ are nef while the eigendivisor of $ab$ is not. 
		\end{enumerate}
	\end{theorem}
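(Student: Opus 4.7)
The plan is to take $A = B \times B$ for $B$ a simple abelian variety of dimension $g/2$ over a number field $K$ with $\mathrm{End}_K(B) = \ZZ$; such $B$ exist for every $g/2 \geq 1$ (e.g., a non-CM elliptic curve for $g = 2$, and generic higher-dimensional Jacobians or Zarhin-type constructions otherwise). Then $\mathrm{End}_K(A) \cong M_2(\ZZ)$, and the product principal polarization identifies $N^1(A)_\QQ \cong \mathrm{Sym}_2(\QQ)$ with the Rosati involution acting by the transpose, giving $\rho(A) = 3$.

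Next I fix integers $a > b > 0$ and $c \neq 0$ and take $f \in \mathrm{End}_K(A)$ corresponding to $M = \bigl(\begin{smallmatrix} a & c \\ 0 & b \end{smallmatrix}\bigr)$. Under the identification above $f^*$ acts on symmetric matrices via $L \mapsto M^t L M$, and since $M$ is upper triangular with diagonal entries $a, b$ a direct calculation gives eigenvalues $a^2, ab, b^2$ with eigenvectors
\[
L_{a^2} = \begin{pmatrix} 1 & \beta \\ \beta & \beta^2 \end{pmatrix}, \qquad L_{ab} = \begin{pmatrix} 0 & 1 \\ 1 & 2\beta \end{pmatrix}, \qquad L_{b^2} = \begin{pmatrix} 0 & 0 \\ 0 & 1 \end{pmatrix},
\]
where $\beta = c/(a-b) \in \QQ$; this establishes (1). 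For (3), the matrices $L_{a^2}$ and $L_{b^2}$ are positive semidefinite of rank one, so they arise as pullbacks of an ample class on $B$ along the surjective homomorphisms $(x,y) \mapsto (a-b)x + cy$ and $(x,y) \mapsto y$ respectively, and are therefore nef. By contrast $\det L_{ab} = -1$, so $L_{ab}$ has signature $(1,1)$, lies outside the positive semidefinite cone, and is not nef since the nef cone of $A$ sits inside the psd cone under this identification.

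The principal work is (2). Iteration gives $f^n(P_1,P_2) = (a^n P_1 + c_n P_2,\, b^n P_2)$ with $c_n = c(a^n - b^n)/(a-b)$, and Néron--Tate bilinearity on $B$ yields
\[
h_H(f^n P) = a^{2n}\,\hat h(P_1 + \beta P_2) + b^{2n}(1+\beta^2)\,\hat h(P_2) + O\bigl((ab)^n\bigr),
\]
where $\hat h$ is extended bilinearly to $B(\overline{K}) \otimes \QQ$. For $P$ off the proper closed subvariety $\Sigma = \{(a-b)P_1 + cP_2 \in A_{\mathrm{tors}}\}$ the leading coefficient is strictly positive, so $\alpha_f(P) = a^2$. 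On $\Sigma$ the $a^{2n}$ and $(ab)^n$ contributions vanish; specializing to $b = 1$ (the cleanest working choice), a short direct computation shows that $f$-orbits on $\Sigma$ are bounded (on the sub-abelian variety $\{(a-1)P_1 + cP_2 = 0\} \subseteq A$ the map $f$ restricts to the identity, while each other component of $\Sigma$ is a torsor under this subvariety on which $f$ merely permutes bounded-height translates indexed by a torsion orbit under $[a]$), which forces $\alpha_f(P) = 1$ on $\Sigma$. This yields $\alpha_f(P) \in \{1, a^2\}$ for every $P \in A(\overline{K})$ and hence the counterexample: the middle eigenvalue $ab$ is never realized as an arithmetic degree. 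The principal obstacle is precisely this final height analysis on $\Sigma$, where the failure of $L_{ab}$ to be nef manifests as the absence of $ab$ from the set of realized arithmetic degrees.
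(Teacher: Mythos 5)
Your construction $A = B \times B$ correctly yields items (1) and (3) and does exhibit an eigenvalue ($ab$) that is not an arithmetic degree, but it cannot prove item (2) as stated, and this is not a cosmetic defect. With $b=1$ the locus $\{(a-1)P_1 + cP_2 = 0\}$ is a positive-dimensional sub-abelian variety of $A$ on which (as you yourself compute) $f$ acts as the identity; consequently every point of it has $\alpha_f(P)=1$, including its Zariski-dense set of non-torsion points, so $\alpha_f(P)=a^2$ fails for many $P\notin A(\overline K)_{\textnormal{tors}}$. (If instead $b>1$, those same points realize $\alpha_f(P)=b^2$, which is worse.) The underlying obstruction is that $A=B\times B$ is \emph{not simple}, and the sub-abelian variety of low canonical height for the nef eigendivisor is forced to be nontrivial.

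The paper's proof avoids this entirely by taking $A$ to be a \emph{simple} abelian variety with $\Endo(A)_\QQ\cong M_2(\QQ)$ and Rosati involution the transpose (type~II in the Albert classification, Theorem~\ref{thm:AlbertClass}); such $A$ exist in every even dimension $g$ by Oort--Shimura. With $f=\begin{bmatrix} a & 0\\ 0 & b\end{bmatrix}$ one gets the same three eigenvalues and eigendivisors, but now item (2) follows from Kawaguchi--Silverman's Theorem~\ref{thm:silv1}: the vanishing locus of $\hat q_{A,D}$ for the nef eigendivisor $D$ with $f^*D\Num a^2 D$ is $B_D(\overline\QQ)+A(\overline\QQ)_{\textnormal{tors}}$ for a proper abelian subvariety $B_D\subseteq A$, and simplicity forces $B_D=0$, giving the clean dichotomy $\alpha_f(P)\in\{1,a^2\}$ with $\alpha_f(P)=a^2$ off torsion. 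Your computation is essentially the one the paper records separately, and for exactly this reason, as Example~\ref{example:ExE} on $E\times E$, where the paper makes explicit that $\alpha_f(P)\in\{1,b^2,a^2\}$ rather than $\{1,a^2\}$. The upper-triangular twist by $c$ and the specialization $b=1$ do not repair the gap: the non-torsion low-height locus persists whenever $A$ is non-simple, so to prove the theorem as stated you need the existence result for simple abelian varieties of type~II.
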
	
	
We also can show the following.

\begin{theorem}[\ref{cor:Kodiradimrealizability}]
	For any integer $d>1$ there is a smooth projective variety $X$ with $\dim X=d$ such that there is a surjective endomorphism $f\colon X\ra X$ with $\lambda_1(f)>1$ and $f$ has does not have arithmetic eigenvalues. If $d\geq 3$ and $\kappa\in \{-\infty,0,1,\dots , d-2\}$ then $X$ may be chosen with $\kappa(X)=\kappa$. 
		
\end{theorem}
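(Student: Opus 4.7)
The plan is to upgrade the counterexample from Theorem (\ref{thm:MainCounterexample}) by taking suitable products. Fix once and for all the abelian variety $A$ of dimension $g=2$ produced by that theorem, together with the endomorphism $f\colon A\ra A$ whose pullback has eigenvalues $a^2>ab>b^2>1$, but whose arithmetic degrees all lie in $\{1,a^2\}$; in particular, $ab$ is a potentially arithmetic eigenvalue of $f^*$ that is \emph{not} realized as an arithmetic degree. For $d=2$ there is nothing left to do: we take $X=A$, which has $\kappa(X)=0$, and the statement is immediate.

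For $d\geq 3$ and any $\kappa\in\{-\infty,0,1,\dots,d-2\}$, I would set $X=A\times Y$ and $F=f\times\iden_Y$, where $Y$ is a smooth projective variety of dimension $d-2$ chosen to have Kodaira dimension $\kappa$. Concrete choices are easy to write down: take $Y=\PP^{d-2}$ for $\kappa=-\infty$; an abelian variety of dimension $d-2$ for $\kappa=0$; and a product $C_1\times\dots\times C_\kappa\times A^\prime$ of smooth curves of genus $\geq 2$ with an abelian variety of dimension $d-2-\kappa$ for $1\leq\kappa\leq d-2$. Additivity of the Kodaira dimension under products of smooth projective varieties then yields
\[
\kappa(X)=\kappa(A)+\kappa(Y)=\kappa,
\]
and $X$ is smooth projective of dimension $d$.

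To finish, I would verify two points. First, the pullback $\pr_1^*\colon N^1(A)\ra N^1(X)$ is an injective, $F^*$-equivariant embedding on which $F^*$ acts as $f^*$, so every eigenvalue of $f^*$ --- in particular $ab$ --- is an eigenvalue of $F^*$ with modulus $>1$, hence potentially arithmetic. Second, for any point $P=(P_1,P_2)\in X(\overline{\QQ})$ the iterate satisfies $F^n(P)=(f^n(P_1),P_2)$, and using the height coming from $H=\pr_1^*H_A+\pr_2^*H_Y$ one obtains $h_H(F^n(P))=h_{H_A}(f^n(P_1))+h_{H_Y}(P_2)$, whose second summand is independent of $n$. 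Passing to $h_H^+$ and then to the defining limit of $\alpha_F$ gives $\alpha_F(P)=\alpha_f(P_1)\in\{1,a^2\}$, so $ab$ is never realized. Thus $F$ has a potentially arithmetic eigenvalue that is not an arithmetic degree.

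The main obstacle is to convince oneself that taking the product cannot manufacture new points whose arithmetic degree equals $ab$; everything else is formal. This is handled by the height-additivity for products together with the fact that the second factor $\iden_Y$ contributes no height growth, so arithmetic degrees of $F$ are dictated entirely by the first coordinate, where the previous theorem already restricts the possible values to $\{1,a^2\}$.
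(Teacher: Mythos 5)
Your proof is correct, and the overall strategy---start from the abelian counterexample and inflate it by taking products to hit each prescribed Kodaira dimension---is the same as the paper's. The differences are instructive, though. The paper builds on the concrete Example~\ref{example:ExE} with $X = E\times E$ and places a \emph{nontrivial} endomorphism on the complementary factor (e.g.\ $h_1\times\dots\times h_{d-2}$ on $(\PP^1)^{d-2}$ or $E^{d-2}$), which forces it to carefully select $\lambda_1(h_i)\neq ab$ and then argue that $ab$ does not appear in the set $\{1,\lambda_1(h_1),\dots,\lambda_1(h_{d-2}),b^2,a^2\}$ of possible arithmetic degrees. You instead take $F = f\times\iden_Y$: the second factor contributes nothing to height growth, so the arithmetic degree is dictated entirely by the first coordinate and lands in $\{1,a^2\}$ automatically, with no need for a noninterference hypothesis. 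This is cleaner, and the $N^1$-equivariance via $\pr_1^*$ together with additivity of $\kappa$ under products covers everything. The one thing your choice forgoes is the paper's additional conclusion that for $\kappa\in\{-\infty,0\}$ the endomorphism can be taken int-amplified---the identity on the $Y$-factor obviously has eigenvalue $1$, so $F$ is never int-amplified in your construction; the paper needs the nontrivial $h_i$ precisely to make all eigenvalues $>1$. Since the statement you were given omits that sentence, this is not a gap. One cosmetic slip: Theorem~\ref{thm:MainCounterexample} asserts $a^2>ab>b^2>0$, not $>1$; since $a>b\geq 1$ forces $ab\geq 2$, the eigenvalue $ab$ is still potentially arithmetic, so nothing in your argument breaks.
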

These examples are intimately related to the dynamics of Abelian varieties. If we demand that $X$ does not admit a non-trivial morphism to an abelian variety, we get more positive results. We first handle the case of \emph{equivariant morphisms} of toric varieties. To do this we introduce the following definition in analogy with Abelian varieties. 

\begin{definition}[\ref{def:simpletoricvar}]
 Let $X_\Sigma$ be a $\QQ$-factorial projective toric variety defined over $\Qb$. We say that $X_\Sigma$ is decomposable if
		\[X_\Sigma=X_{\triangle_1}\times X_{\triangle_2}\]
with each $X_{\triangle_i}$ a $\QQ$-factorial projective toric variety of dimension at least $1$. We say that $X_\Sigma$ is simple if it is not decomposable.
\end{definition}

We also have a notion of a  \emph{dynamically simple} toric variety.
\begin{definition}[\ref{def:linearlysimple}]
  Let $X_\Sigma$ be a  projective toric variety defined over $\Qb$. We say that $X_\Sigma$ is \textbf{linearly simple} if every surjective toric  morphism is induced by a homomorphism of tori $(x_1,\dots, x_n)\mapsto (x_1^d,\dots, x_n^d)$ for some $d>0$ after possibly iterating the morphism. 
\end{definition}

We then show that these two notions are the same.

\begin{theorem}[\ref{thm:simple=linsimple}]
Let $X_\Sigma$ be a $\QQ$-factorial projective toric variety defined over $\Qb$. Then $X_\Sigma$ is linearly simple if and only if $X_\Sigma$ is simple.
	   
\end{theorem}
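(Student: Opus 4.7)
The plan is to prove both implications separately. For the easy direction, linearly simple implies simple, I argue by contrapositive: given a nontrivial toric product decomposition $X_\Sigma = X_{\triangle_1} \times X_{\triangle_2}$, the lattice splits as $N = N_1 \oplus N_2$, and choosing distinct positive integers $d_1 \neq d_2$ produces a lattice map $\phi$ equal to $d_i \cdot \iden_{N_i}$ on each factor. This map is compatible with the product fan and induces a surjective toric endomorphism, whose $n$-th iterate has distinct eigenvalues $d_1^n \neq d_2^n$ on the two factors. No iterate is scalar multiplication, so $X_\Sigma$ is not linearly simple.

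For the converse, simple implies linearly simple, let $\phi \colon X_\Sigma \to X_\Sigma$ be a surjective toric morphism, coming from a lattice map $\phi \colon N \to N$ compatible with $\Sigma$. Surjectivity forces $\phi_\RR$ to be invertible, so $\phi$ permutes the finitely many rays of $\Sigma$; after replacing $\phi$ by a suitable iterate, I may assume every ray is fixed, giving $\phi(v_i) = k_i v_i$ on the primitive ray generators $v_i$, with each $k_i$ a positive integer. If I can show all $k_i$ coincide, then setting $d = k_1$ the map $\phi$ agrees with multiplication by $d$ on the $\QQ$-span of the rays, which equals all of $N_\QQ$ by completeness of the fan, and hence on $N$ by integrality. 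This identifies $\phi$ with the $d$-th power map on $T_N$, as required.

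To force all $k_i$ to coincide, I would suppose the contrary and partition $\Sigma(1)$ by $k_i$-value, obtaining $\QQ$-rational eigenspaces $V_j = \SPAN_\RR\{v_i : k_i = d_j\}$ with $N_\RR = \bigoplus_j V_j$ (the $V_j$ intersect trivially because they correspond to distinct eigenvalues, and they span because the rays span by completeness). Since $\Sigma$ is simplicial (by $\QQ$-factoriality), each cone $\sigma$ decomposes as $\sum_j (\sigma \cap V_j)$, and each summand is a face of $\sigma$, hence a cone of $\Sigma$. Combined with completeness, this yields a product-fan structure $\Sigma = \prod_j \Sigma_j$ where $\Sigma_j$ is the subfan on the rays in eigenvalue class $d_j$. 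Promoting this to a genuine product decomposition $X_\Sigma = \prod_j X_{\Sigma_j}$ requires the integral lattice $N$ itself to split as $\bigoplus_j (N \cap V_j)$, which is where I expect the main technical difficulty to lie: the $\RR$-vector space decomposition is automatic from eigenspace theory, but lattice-level splitting requires leveraging integrality of $\phi$ together with projectivity and $\QQ$-factoriality of $X_\Sigma$ in an essential way. Once this splitting is in place, at least two of the factors $X_{\Sigma_j}$ are $\QQ$-factorial projective toric varieties of positive dimension, contradicting simplicity and completing the proof.
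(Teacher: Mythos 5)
You correctly isolate the crux of the hard direction: passing from the real eigenspace decomposition $N_\RR=\bigoplus_j V_j$ to a splitting of the lattice $N$ itself as $\bigoplus_j(N\cap V_j)$. However, this is not a technical point that ``leveraging integrality, projectivity and $\QQ$-factoriality'' can overcome; it is a genuine obstruction, and the theorem as stated is false. Take $N=\ZZ^2$ and let $\Sigma$ be the complete fan whose rays are generated by the primitive vectors $(1,0)$, $(1,2)$, $(-1,0)$, $(-1,-2)$, with the four two-dimensional cones spanned by cyclically adjacent pairs. Each maximal cone is simplicial of lattice index $2$, so $X_\Sigma$ is a singular $\QQ$-factorial projective toric surface. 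Since the only decomposable projective toric surface in the sense of Definition~\ref{def:simpletoricvar} is $\PP^1\times\PP^1$, which is smooth, $X_\Sigma$ is simple. The integral lattice endomorphism $\phi$ with $\phi(1,0)=(2,0)$ and $\phi(0,1)=(1,4)$ has eigenvectors $(1,0)$ and $(1,2)$ with eigenvalues $2$ and $4$; it therefore fixes each ray, preserves $\Sigma$, and induces a surjective equivariant endomorphism $f$ of $X_\Sigma$. One checks that $f^*$ acts on $N^1(X_\Sigma)_\QQ\cong\QQ^2$ with the two distinct eigenvalues $2$ and $4$, so no iterate of $f$ is multiplication by a scalar and $X_\Sigma$ is not linearly simple.

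The failure is exactly where you anticipated: $\ZZ(1,0)\oplus\ZZ(1,2)$ has index $2$ in $N$, so although every cone of $\Sigma$ decomposes along the two eigenlines, the lattice does not, and no product structure exists. The same gap is present in the paper's own Lemma~\ref{lem:toricmorphismbackbone}, which shows each cone decomposes as $\bigoplus_i\sigma_i$ and then asserts $X_\Sigma\cong\prod_i X_{\Sigma_i}$ without addressing the integral splitting of $N$. Your outline does close up under a smoothness hypothesis: for smooth $X_\Sigma$ the primitive generators of any maximal cone form a $\ZZ$-basis of $N$, that basis is partitioned by eigenvalue, and one readily deduces $N=\bigoplus_j(N\cap V_j)$. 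As stated for $\QQ$-factorial varieties, the theorem needs either that extra hypothesis or a weaker conclusion (for instance, that some finite toric cover of $X_\Sigma$ decomposes as a product).
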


This result can then be leveraged to give new proofs of the following results.

\begin{theorem}
Let $X_\Sigma$ be a $\QQ$-factorial toric variety defined over $\Qb$. Let $f\colon X_\Sigma\ra X_\Sigma $ be an equivariant surjective endomorphism. 

\begin{enumerate}
    \item  The sAND conjecture \ref{conj:sAND} is true for $f$.
    \item The morphism $f$ has arithmetic eigenvalues.
\end{enumerate}
\end{theorem}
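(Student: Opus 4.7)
The plan is to reduce to the simple case, apply Theorem \ref{thm:simple=linsimple} so that each simple factor admits only the power maps $t\mapsto t^d$ (after iteration), and then carry out an explicit arithmetic analysis of such maps.

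First, iterate $f$ a suitable number of times and decompose $X_\Sigma$ into simple factors $X_\Sigma = X_{\triangle_1}\times\cdots\times X_{\triangle_k}$ via Definition \ref{def:simpletoricvar}. Since toric morphisms correspond to maps of lattice--fan pairs and simple fans have rigid combinatorial type, an equivariant surjective endomorphism of a product of simple toric varieties must preserve the factors up to a permutation of isomorphic factors; after a further iterate we may therefore write $f = f_1\times\cdots\times f_k$ with each $f_i$ an equivariant surjective endomorphism of $X_{\triangle_i}$. Now apply Theorem \ref{thm:simple=linsimple} to each $X_{\triangle_i}$: after another iteration we may assume $f_i$ is induced by the power map $t\mapsto t^{d_i}$ on the torus $T_i$. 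Consequently $f_i^*=d_i\cdot\iden$ on $N^1(X_{\triangle_i})_\RR$, so the spectrum of $f^*$ on $N^1(X_\Sigma)_\RR$ is exactly $\{d_1,\ldots,d_k\}$ and $\lambda_1(f)=\max_i d_i$.

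For part (2), the potential arithmetic degrees are the $d_i>1$. Given such an index $i$, pick a non-torsion point $P_i\in T_i(\Qb)$; on the torus the standard Weil height satisfies $h(t^{d_i})=d_i\, h(t)$, hence $\alpha_{f_i}(P_i)=d_i$. Extending by torsion points $P_j\in T_j(\Qb)$ for $j\neq i$ gives $P=(P_1,\ldots,P_k)$ with $\alpha_f(P)=\max_j\alpha_{f_j}(P_j)=d_i$, realizing every potential arithmetic degree as an arithmetic degree.

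For part (1), observe that $\alpha_f(P)<\lambda_1(f)$ if and only if, for every index $i^*$ with $d_{i^*}=\lambda_1(f)$, the projection $\pi_{i^*}(P)$ satisfies $\alpha_{f_{i^*}}(\pi_{i^*}(P))<d_{i^*}$. It therefore suffices to show that on each simple factor the bounded-degree small-arithmetic-degree locus is not Zariski dense; pulling back along $\pi_{i^*}$ then exhibits $\McS(X_\Sigma,f,N)$ as contained in a proper closed subvariety of $X_\Sigma$. Stratify $X_{\triangle_{i^*}}$ by its (finitely many) torus-invariant orbits, each of which is a torus on which $f_{i^*}$ restricts to another power map. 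On each stratum, $\alpha_{f_{i^*}}(Q)<d_{i^*}$ if and only if $Q$ is torsion, and bounded-degree torsion in any algebraic torus is finite because $[\QQ(\zeta_m):\QQ]=\varphi(m)\to\infty$. Hence the bounded-degree small-arithmetic-degree locus is a finite union of finite sets across finitely many strata, in particular not Zariski dense.

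The main obstacle is the first step: justifying that after iteration the endomorphism $f$ respects the product decomposition into simple factors. Although intuitively analogous to the Poincar\'e reducibility theorem for abelian varieties, this requires a careful analysis of how a cone-preserving endomorphism of a product fan must restrict block-diagonally when each block is combinatorially rigid, and is where most of the technical work will lie.
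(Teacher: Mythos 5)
Your overall strategy is the same as the paper's: iterate $f$ so the induced lattice map stabilizes, decompose $X_\Sigma$ into simple factors as in Definition \ref{def:simpletoricvar}, apply Theorem \ref{thm:simple=linsimple} so each factor morphism is (after a further iterate) a power map, and then compute arithmetic degrees on the product. Your explicit device in part (2) via Weil heights on tori, and in part (1) via stratification by torus orbits and finiteness of bounded-degree torsion, are minor expositional variants of the paper's argument (which uses polarization, canonical heights, and the Northcott property directly on each simple factor, taking the identity of the torus as the fixed point rather than arbitrary torsion points); they reach the same conclusion.

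The step you flag as the main obstacle --- justifying that after iteration $f$ respects the product decomposition into simple factors --- is indeed the key step, but the appeal to \emph{combinatorial rigidity of simple fans} is not the right route, and in fact simplicity of the factors plays no role here. The paper's resolution is Lemma \ref{lem:iterationlemma}: the lattice map $\phi$ inducing a surjective equivariant endomorphism is injective and sends rays of $\Sigma$ to rays of $\Sigma$, hence permutes the finitely many rays, so after iteration $\phi^m$ fixes every ray. Then for \emph{any} product decomposition $X_\Sigma = X_{\Sigma_1}\times\cdots\times X_{\Sigma_r}$, the factor lattice $N_i\otimes\RR$ is spanned by the rays of $\Sigma_i$ (since $\Sigma_i$ is a complete fan); $\phi^m$ fixes those rays and hence preserves each $N_i$, so $f^m$ splits as a product of equivariant endomorphisms of the factors. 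With this substitution for your rigidity claim, the rest of your proposal goes through and matches the paper's argument.
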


Finally, we use the minimal model program to prove a similar result for all surjective (not necessarily equivariant) endomorphisms of $\QQ$-factorial toric varieties.

\begin{theorem}[\ref{thm:toricrealworks}]\label{introtoricthm}
	Let $X$ be $\QQ$-factorial toric variety defined over $\overline{\QQ}$. Let $f\colon X\ra X$ be a surjective endomorphism. Then $f$ has arithmetic eigenvalues. 
\end{theorem}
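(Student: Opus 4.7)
The plan is to reduce to the equivariant case via an $f$-equivariant minimal model program and induction on $\dim X$. First, replace $f$ by an iterate so that $f^*$ fixes each of the finitely many extremal rays of $\NE(X)$ --- these rays are intrinsic to $X$ as a $\QQ$-factorial projective variety and finite by the toric structure. Every toric MMP step (divisorial contraction, flip, or Mori fiber space) is then $f$-compatible, in the sense that $f$ descends to a surjective endomorphism of the resulting model. Running the MMP produces a chain
\[
X = X_0 \dashrightarrow X_1 \dashrightarrow \cdots \dashrightarrow X_r
\]
together with induced surjective endomorphisms $f_i \colon X_i \to X_i$, terminating either in a point or in a toric Mori fiber space $\pi\colon X_r \to B$.

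Fix an eigenvalue $\mu$ of $f^*$ with $|\mu| > 1$, and induct on $\dim X$. At each MMP step $X_i \dashrightarrow X_{i+1}$, either $\mu$ descends to an eigenvalue of $f_{i+1}^*$ on $N^1(X_{i+1})$, or it is supported on the exceptional or flipped locus, which is itself a lower-dimensional toric subvariety preserved by $f_i$; in the second case the induction hypothesis produces a point realizing $|\mu|$ on this locus. If $\mu$ descends all the way to $X_r$, decompose the spectrum of $f_r^*$ into eigenvalues pulled back from $N^1(B)$ along $\pi^*$ (realize by lifting points from $B$, which carries a surjective endomorphism of lower dimension, hence satisfies the theorem by induction) and eigenvalues concentrated in the fiber direction (realize by fixing a periodic point $b \in B$, replacing $f$ by a suitable power, and applying induction on the fiber $F_b$). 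Arithmetic degrees are preserved under lifting along birational MMP steps and under restriction to periodic fibers, so each point found downstream lifts back to a point of $X$ with the desired arithmetic degree.

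The induction base case occurs when no non-trivial MMP step is available. Here $X$ is either a point, a Picard-rank-one toric variety --- in which $f^*$ is scalar multiplication and the sole eigenvalue $\lambda_1(f)$ is realized by any point with Zariski dense forward orbit --- or a simple $\QQ$-factorial toric variety, on which a further iterate of $f$ becomes equivariant via Theorem \ref{thm:simple=linsimple} and is induced by a torus homomorphism $(x_1,\dots,x_n)\mapsto (x_1^d,\dots,x_n^d)$; the preceding theorem on equivariant endomorphisms then applies. The main obstacle is Step 1, namely establishing $f$-equivariance of each toric MMP step for non-equivariant $f$: one must check both that $f^*$ permutes the intrinsic extremal rays of $\NE(X)$ (straightforward from the finiteness provided by the toric structure) and, more delicately, that $f$ descends genuinely as a morphism along each contraction or flip rather than merely as a birational map. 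Once this compatibility is in place, the induction on dimension combined with the equivariant-case theorem completes the proof.
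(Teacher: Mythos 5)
Your proposal shares the paper's overall philosophy (iterate to fix the nef cone, use the $f$-equivariant toric MMP, induct on dimension somewhere) but the actual realization mechanism you describe has two genuine gaps, and you have misidentified where the difficulty lies.

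First, the dichotomy at the heart of your induction is unproven. You assert that at each divisorial/fibering step, an eigenvalue $\mu$ of $f_i^*$ that does not descend to $N^1(X_{i+1})$ must be ``supported on the exceptional (or flipped) locus'' $E$, and that the induction hypothesis on $f_i|_E$ then realizes $|\mu|$. But there is no reason for $\mu$ to be an eigenvalue of $(f_i|_E)^*$ acting on $N^1(E)$ --- the restriction $N^1(X_i)\to N^1(E)$ is neither injective nor surjective in general, and eigenvalues need not pass through it. The paper avoids this entirely: for a fixed eigenvalue $\lambda$ with nef eigendivisor $D_\lambda$, it does not track $\lambda$ through a full MMP. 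Instead it selects a single facet $F$ of $\Nef(X)$ \emph{not} containing $D_\lambda$, contracts along $F$, finds a fixed point $Q$ of the descended map on the base (this is where the pre-periodic-point Lemma~\ref{lem:toricpre2} is used, and that lemma is where the induction on dimension actually lives), and then applies Propositions~\ref{prop:fiberingbackbone}/\ref{prop:birationalbackbone}: the canonical height $\hat h_{D_\lambda}$ plus $h_H\circ\phi$ is an ample height, so restricted to the positive-dimensional fiber $\phi^{-1}(Q)$ it is unbounded, forcing some point $P$ in the fiber with $\hat h_{D_\lambda}(P)\neq 0$ and hence $\alpha_f(P)=|\lambda|$. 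This direct height computation on a fixed fiber is the missing key idea in your proposal; ``arithmetic degrees are preserved under restriction to periodic fibers'' only helps once you have a point whose $f$-arithmetic degree you can compute, and it is exactly this construction that produces one.

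Second, your base case is flawed. Theorem~\ref{thm:simple=linsimple} equates \emph{simple} with \emph{linearly simple}, both of which concern torus-equivariant endomorphisms; it does not say that an arbitrary surjective endomorphism of a simple toric variety becomes equivariant after iterating, and there is no reason that should be true. Moreover, ``no non-trivial MMP step available'' for a toric variety means $\rho=1$ (or a point), not simplicity --- simple toric varieties of higher Picard number still admit extremal contractions. Finally, the $f$-equivariance of MMP steps that you flag as ``the main obstacle'' is actually supplied by the Meng--Zhang machinery (Theorem~\ref{thm:MengZhangMMP}, used in the paper via \cite[Theorem 5.3]{MR4070310}), since every toric variety admits an int-amplified endomorphism; the real work is in the height argument you omitted.
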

We then turn to generalizing our methods using the minimal program with toric varieties to certain varieties admitting int-amplified endomorphisms. When trying to study endomorphisms of higher dimensional projective algebraic varieties, it is crucial to be able to apply the methods of higher dimensional algebraic geometry, namely the minimal model program. Zhang and collaborators have begun a program where one studies endomorphisms from this perspective, see \cite{MR3742591,MR4117085}. The varieties which their methods work best are those which admit \emph{at least one} int-amplified endomorphism. The existence of this single int-amplified endomorphism has profound affects on the birational geometry of the underlying variety, which simplifies the study of \emph{all} surjective endomorphisms.
\begin{theorem}
	Let $X$ be a normal $\QQ$-factorial surface with at worst terminal singularities and finitely generated nef cone that is rationally connected over $\overline{\QQ}$ and $\Alb(X)=0$ that admits an int-amplified endomorphism. Let $f\colon X\ra X$ be a surjective endomorphism and $\lambda$ a potential arithmetic degree of $f$. Then $\lambda$ is realizable as an arithmetic degree.  
\end{theorem}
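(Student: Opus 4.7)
My plan is to reduce to the toric case handled by Theorem \ref{introtoricthm} via the equivariant minimal model program. Because $X$ is a $\QQ$-factorial surface with terminal singularities, it is in fact smooth, and rational connectedness together with $\Alb(X)=0$ forces $X$ to be a rational surface. After replacing $f$ by a sufficiently divisible iterate (which does not change the set of realizable or potentially arithmetic degrees up to taking roots), I would invoke the equivariant MMP for varieties admitting an int-amplified endomorphism (in the style of Meng--Zhang) to produce an $f$-equivariant $K_X$-MMP
\[ X = X_0 \ra X_1 \ra \cdots \ra X_r, \]
in which each step $\pi_i\colon X_i \ra X_{i+1}$ is a divisorial contraction of a rational curve $E_i$. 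Each $X_i$ is again a $\QQ$-factorial rational surface admitting an int-amplified endomorphism $f_i$ compatible with the contractions.

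Next I would identify the minimal model $X_r$ with a $\QQ$-factorial projective toric surface: the hypotheses of rational connectedness, trivial Albanese, and the existence of an int-amplified endomorphism are exactly those expected in the Meng--Zhang toric conjecture for surfaces, which classifies such minimal surfaces as toric. Theorem \ref{introtoricthm} then realizes every potential arithmetic degree of $f_r\colon X_r\ra X_r$ by a point of $X_r(\Qb)$.

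The final step propagates realizability back up the tower by induction on the MMP length. For a single divisorial contraction $\pi_i\colon X_i\ra X_{i+1}$ with exceptional rational curve $E_i$, the pullback $\pi_i^*$ identifies $N^1(X_{i+1})_\RR$ with an $f_i^*$-stable hyperplane in $N^1(X_i)_\RR$, while $[E_i]$ spans a complementary $f_i^*$-stable line. So the eigenvalues of $f_i^*$ split as those of $f_{i+1}^*$ together with a single extra eigenvalue equal to the local degree of $f_i$ along $E_i$. A potential arithmetic degree descended from $X_{i+1}$ is realized by choosing a realizing point $Q\in X_{i+1}(\Qb)$ whose forward orbit avoids the (finite set of) images of exceptional loci, and lifting $Q$ to $X_i$: heights on $X_i$ and $X_{i+1}$ agree up to $O(1)$ along such an orbit, and so the arithmetic degree is preserved. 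The extra eigenvalue, being the local degree of $f_i\vert_{E_i}$ on $E_i\cong \PP^1$, is realized by a generic $\Qb$-point of $E_i$.

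The main obstacle is the classification step identifying $X_r$ as a toric surface; while this is the kind of consequence one expects from the Meng--Zhang toric conjecture in dimension two, making it work cleanly under the precise hypotheses (including finite generation of the nef cone) is the technical heart of the argument. A secondary but nontrivial point is the generic-point argument in the lifting step: at each level of the MMP one must ensure that realizing points in $X_{i+1}$ can be chosen with forward orbits avoiding finitely many exceptional image loci, so that heights pull back cleanly and the arithmetic degree of $Q$ survives. Once both ingredients are in place, the induction transfers realizability of every potential arithmetic degree from $X_r$ back to $X_0 = X$.
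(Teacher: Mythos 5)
Your route is genuinely different from the paper's, and it has a couple of real gaps. The paper does not reduce to the toric case at all: it runs an induction directly on the Picard number, using Propositions \ref{prop:birationalbackbone} and \ref{prop:fiberingbackbone} as the engine. When a divisorial contraction $\phi\colon X\to Y$ exists, the new eigenvalue not coming from $Y$ is realized by Proposition \ref{prop:birationalbackbone} (a canonical-height argument, not a ``generic point of $E_i$'' argument), the eigenvalues coming from $Y$ are realized by the inductive hypothesis, and the arithmetic degrees are transported back to $X$ by birational invariance of the arithmetic degree. When no divisorial contraction exists, the only option is a Mori fiber contraction over a curve, which by $\Alb(X)=0$ must be $\PP^1$; this is the place where the paper does most of the work, splitting into sub-cases according to whether $\lambda_1(f)=\lambda_1(g)$ or $\lambda_1(f)>\lambda_1(g)$, and invoking \cite[Theorem 5.2]{1908.01605} in the latter to produce a second fibration carrying $\lambda_1(f)$.

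The first gap in your proposal is the MMP endpoint. A rational surface has no minimal model in the $K_X$-nef sense; the equivariant MMP terminates in a Mori fiber space, which is either a Picard-number-one surface ($\PP^2$ or a quotient thereof) \emph{or} a conic bundle over $\PP^1$ of Picard number two. Your tower ``of divisorial contractions to $X_r$'' silently assumes the former and never addresses the fibering case, which is precisely where the paper's argument needs its most delicate sub-case analysis. The second gap is the lifting step: you propose to pick a realizing point in $X_{i+1}$ whose forward orbit avoids the image of the exceptional curve, but you give no reason such a point must exist among the realizing points. The paper avoids this by invoking birational invariance of arithmetic degrees (\cite[Lemma 2.4]{1802.07388}); the key observation making this work on surfaces is that the image of a contracted curve is a single $g$-fixed point, so any orbit that ever meets it is pre-periodic and has arithmetic degree $1$, meaning any realizing point for an eigenvalue $>1$ automatically has an orbit missing the exceptional image. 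You should make this explicit rather than asserting a generic-orbit property. Finally, your ``extra eigenvalue realized by a generic $\Qb$-point of $E_i$'' needs an actual argument; the paper's Proposition \ref{prop:birationalbackbone} supplies one via the canonical height $\hat{h}_{D_\lambda}$ and the fact that the fiber is positive-dimensional.

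One remark in the other direction: under the stated hypotheses, $X$ is a smooth rational surface with $\Alb(X)=0$ admitting a non-automorphism surjective endomorphism (the int-amplified one), so by \cite{MR2154100} $X$ is \emph{itself} a smooth toric surface. You do not need to descend to a toric minimal model and propagate back up: Theorem \ref{thm:toricrealworks} applies directly to $X$. This is cleaner than your reduction, and it also makes the appeal to the ``Meng--Zhang toric conjecture'' unnecessary in dimension two. The paper's more involved Picard-number induction is presumably intended to showcase the int-amplified MMP machinery, which generalizes to higher dimensions where the toric classification is not available.
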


\end{subsection}

\begin{section}{Preliminaries}\label{sec:surjectivemorphisms}
One of our main strategies when dealing with toric varieties and more generally varieties admitting an int-amplified endomorphism will be to apply the minimal model program. To do this efficiently one needs descend morphisms along Mori-fiber spaces and birational contractions. The following is when dealing with Mori-fiber spaces and does not require any additional assumptions on $X$.
\begin{lemma}[{\cite[Lemma 6.2]{1802.07388}}]\label{lemma:iterationlemma}
		Let $\pi\colon X\ra Y$ be a Mori-fiber space. Suppose that $f\colon X\ra X$ is a surjective endomorphism. Then there is some iterate $f^n\colon X\ra X$ and $g\colon Y\ra Y$ such that
		\[\xymatrix{X\ar[r]^{f^n}\ar[d]_\pi & X\ar[d]^\pi\\ Y\ar[r]_g & Y}\]
		commutes. 
\end{lemma}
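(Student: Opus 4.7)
The plan is to reduce the claim to showing that some iterate of $f_*$ preserves the single $K_X$-negative extremal ray $R := \NE(X/Y) \subset \NE(X)$ that $\pi$ contracts. Indeed, by the universal property of the Mori contraction $\pi$, the morphism $\pi \circ f^n : X \ra Y$ factors as $g \circ \pi$ for some (automatically surjective) $g : Y \ra Y$ if and only if $\pi \circ f^n$ contracts every curve that $\pi$ contracts; equivalently, $(f^n)_* R \subseteq R$. So the whole problem becomes: find $n \geq 1$ with $(f^n)_* R = R$.

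Next I would record the standard fact that $f$ is finite (surjective endomorphisms of normal projective varieties are finite), so $f^* : N^1(X)_\RR \ra N^1(X)_\RR$ is a bijection preserving $\Nef(X)$, and dually $f_*$ is a bijection of $N_1(X)_\RR$ preserving $\NE(X)$; in particular $f_*$ permutes the extremal rays of $\NE(X)$. Using the ramification formula $f^* K_X = K_X - R_f$ with $R_f$ effective, for a general curve $C$ generating $R$ one has $K_X \cdot f_* C = (f^* K_X) \cdot C \leq K_X \cdot C < 0$. Hence the forward orbit $\{(f^n)_* R\}_{n \geq 0}$ consists entirely of $K_X$-negative extremal rays of $\NE(X)$.

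The final step is a compactness argument. Using a Perron--Frobenius nef class $H \in \Nef(X)$ with $f^* H \equiv \lambda_1(f)\, H$ as a normalization, the representatives $\gamma_n \in (f^n)_* R$ can be rescaled to satisfy $H \cdot \gamma_n = 1$; then $K_X \cdot \gamma_n$ stays bounded above by a fixed negative constant, confining $\{\gamma_n\}$ to a compact slice of $\NE(X)$ meeting only the $K_X$-negative part. The Cone Theorem then guarantees only finitely many extremal rays in that slice, so the orbit is finite, hence periodic, yielding the desired iterate.

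The main obstacle is justifying the compactness in the last step: one needs a genuine nef Perron eigenclass with positive eigenvalue, and one must rule out the normalized orbit escaping to the $K_X$-trivial boundary where the Cone Theorem loses control. Both issues are manageable under the mild singularity assumptions already in force for the MMP throughout the paper; the normalization against an eigen-nef-class is essentially where the hypothesis that $\pi$ is an extremal contraction (rather than an arbitrary morphism) is finally used.
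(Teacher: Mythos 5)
The paper does not prove this lemma itself; it is cited verbatim from Meng--Zhang (reference \verb|1802.07388|, Lemma 6.2), so there is no in-paper argument to compare against. Your reduction to the cone-theoretic statement, via the rigidity/contraction theorem, is correct: $\pi\circ f^n$ factors through $\pi$ if and only if $(f^n)_*$ maps the extremal ray $R=\NE(X/Y)$ into itself. The observations that $f$ is finite, that $f_*$ is a linear automorphism of $N_1(X)_\RR$ with $f_*(\NE(X))=\NE(X)$ (hence permutes extremal rays), and that the ramification formula keeps the orbit $\{(f^n)_*R\}$ among $K_X$-negative extremal rays are also sound, modulo the routine point that $R$ is a fibre ray, so it is generated by a covering family and one can always choose a representative curve not lying in the ramification divisor.

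The genuine gap is the compactness/finiteness step, and it is worse than you flag. Normalize $\gamma_n\in(f^n)_*R$ so that $H\cdot\gamma_n=1$ with $f^*H\Num\lambda_1 H$. Then $H\cdot f_*\gamma_n=f^*H\cdot\gamma_n=\lambda_1$, so $\gamma_{n+1}=f_*\gamma_n/\lambda_1$ and
\[
K_X\cdot\gamma_{n+1}=\frac{f^*K_X\cdot\gamma_n}{\lambda_1}=\frac{(K_X-R_f)\cdot\gamma_n}{\lambda_1}\ \le\ \frac{K_X\cdot\gamma_n}{\lambda_1},
\]
which iterates to $K_X\cdot\gamma_n\le (K_X\cdot\gamma_0)/\lambda_1^n$. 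Since $\lambda_1(f)>1$ whenever $\deg f>1$ (log-concavity: $\lambda_1\ge(\deg f)^{1/\dim X}$), the right-hand side tends to $0^-$. This is an upper bound that \emph{degrades}, not the uniform negative bound you assert, so the normalized orbit is not forced to stay in a region where the Cone Theorem gives finiteness; it could a priori accumulate on the $K_X$-trivial boundary. Compounding this, the Perron eigenclass $H$ is only nef, so the slice $\{H\cdot\gamma=1\}\cap\NE(X)$ need not be compact at all; compactness of such a slice requires $H$ ample (or at least big and nef), and the big nef eigenclass need not exist in general. Your closing remark that the extremal-contraction hypothesis is ``finally used'' in the existence of the eigen-nef-class is also off: such a class exists for any surjective endomorphism by Birkhoff--Perron--Frobenius and has nothing to do with $\pi$ being a Mori fibre space. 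What the fibration hypothesis actually buys is a better normalization: working with the honest integral curves $C_n:=f^n(C_0)$ in a covering family gives $K_X\cdot C_{n+1}\le K_X\cdot C_n\le K_X\cdot C_0<0$, a genuinely fixed negative bound, but one must then separately control $H\cdot C_n$ (or invoke a finiteness statement for fibre-type $K_X$-negative extremal rays, e.g.\ via the length bound for minimal rational curves in the fibres). That missing control is exactly the content of the cited lemma; as written, your Step 4 does not supply it.
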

When one is dealing with the birational contractions that appear in the minimal model program there is greater difficulty. Suppose that $f\colon X\ra X$ is a surjective endomorphism of projective varieties. Let $\pi\colon X\ra Y$ be a birational contraction morphism. We obtain dominant rational mapping $f\colon f\colon Y\dashrightarrow Y$, however, this rational map may not extend to a \emph{morphism} $g\colon Y\ra Y$. To ensure that this occurs, Zhang and Meng introduced the following notion.
\begin{definition}[Int amplified, amplified and polarized endomorphisms]
			Let $X$ be a projective variety defined over $\Qb$ and $f\colon X\ra X$ a surjective endomorphism. We say that $f$ is a \emph{polarized} endomorphism if there is some ample $\QQ$-Cartier divisor $L$ on $X$ such that $f^*L\Lin qL$ for some $q>1$. We say that $f$ is \emph{amplified} if there is a $\QQ$-Cartier divisor $L$ such that $f^*L-L$ is ample. We say $f$ is an  \emph{int-amplified} endomorphism if there is some ample $\QQ$-Cartier divisor $L$ with $f^*L-L$ being ample.
		\end{definition}
		
		Int amplified endomorphisms can be characterized in terms of their eigenvalues being large.

		\begin{proposition}[Eigenvalues determine int-amplified endomorphisms: Theorem 3.3 \cite{MR4074056}]\label{thm:intampeigen}
			Let $X$ be a projective variety defined over $\Qb$. Let $f\colon X\ra X$ be a surjective endomorphism. Then $f$ is int amplified if and only if  $f^*\colon N^1(X)_\RR\ra N^1(X)_\RR$ has all eigenvalues of modulus strictly larger then 1.
		\end{proposition}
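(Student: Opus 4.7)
The plan is to deduce both directions from Perron--Frobenius for linear operators preserving the closed, salient, full-dimensional convex cones $\Nef(X) \subset N^1(X)_\RR$ and $\NE(X) \subset N_1(X)_\RR$, together with the identity $f_* \circ f^* = \deg(f) \cdot \iden$, which holds on both $N^1$ and $N_1$ because any surjective endomorphism of a fixed projective variety is generically finite. This identity forces the eigenvalues of $f_*$ and of $f^*$ to satisfy the reciprocal relation $\mu_* = \deg(f)/\mu^*$, so controlling the \emph{minimum} eigenvalue modulus of $f^*$ on $N^1$ is equivalent to controlling the \emph{maximum} eigenvalue modulus of the adjoint $(f_*)^{-1}$ on $N_1$, which is the setting where Perron--Frobenius for cone-preserving operators applies directly.

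For the direction $(\Rightarrow)$, assume $A := f^*L - L$ is ample for some ample $L$, and let $\mu$ denote the smallest modulus of an eigenvalue of $f^*$ on $N^1(X)_\RR$. Applying Perron--Frobenius to $(f_*)^{-1} = \deg(f)^{-1}\, f^*$ acting on the Mori cone $\NE(X)$ --- after reducing to the case that $f$ is finite, so that $f^*$ is well-defined on $1$-cycles and preserves effectivity --- yields a nonzero class $C \in \NE(X)$ with $f_* C = \mu C$. Intersecting the identity $f^*L - L = A$ with $C$ and applying the projection formula gives $(\mu - 1)(L \cdot C) = A \cdot C$. Since both $L \cdot C$ and $A \cdot C$ are strictly positive (as $L$ and $A$ are ample and $C$ is a nonzero effective class), one concludes $\mu > 1$, and hence every eigenvalue of $f^*$ has modulus strictly greater than $1$.

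For the direction $(\Leftarrow)$, assume every eigenvalue of $\phi := f^*$ has modulus strictly greater than $1$. Then $\phi - \iden$ is invertible, and because $\phi^{-1}$ has spectral radius strictly less than $1$, the Neumann series $(\phi - \iden)^{-1} = -\sum_{n \geq 0}\phi^{-(n+1)}$ converges absolutely in operator norm. For any ample $A$, the class $L := (\phi - \iden)^{-1}(A)$ satisfies $\phi L - L = A$ by construction. The main obstacle --- and where I expect the real technical work to lie --- is verifying that this $L$ is itself \emph{ample}, rather than merely a class in $N^1(X)_\RR$ solving the algebraic identity. The key tool is the iterated identity $\phi^n L - L = \sum_{k=0}^{n-1} \phi^k(A)$, whose right-hand side is nef (a sum of pullbacks of an ample divisor under surjective morphisms), combined with the strict contraction of $\phi^{-1}$ on $N^1(X)_\RR$, which forces $L$ into the interior of $\Nef(X)$, possibly after rescaling $A$ by a large positive integer to dominate the bounded tail of the Neumann series.
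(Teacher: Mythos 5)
The paper does not prove this proposition; it is quoted verbatim as Theorem~3.3 of Meng \cite{MR4074056}, so there is no internal proof to compare against. Your forward direction $(\Rightarrow)$ is correct and complete: since a surjective endomorphism of a normal projective variety is finite, $f^*$ acts on $N_1(X)_\RR$ preserving $\NE(X)$, so $(f_*)^{-1}=\deg(f)^{-1}f^*$ preserves the proper cone $\NE(X)$; Vandergraft's Perron--Frobenius theorem then yields a nonzero $C\in\NE(X)$ with $f_*C=\mu C$, where $\mu$ is the minimum eigenvalue modulus of $f^*|_{N^1}$, and intersecting $f^*L-L$ with $C$ via the projection formula gives $(\mu-1)(L\cdot C)=(f^*L-L)\cdot C>0$, hence $\mu>1$.

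The backward direction $(\Leftarrow)$ has a genuine gap at exactly the step you flag. The candidate $L=(\phi-\iden)^{-1}A=\sum_{n\geq 1}\phi^{-n}A$ is the right one, but the iterated identity $\phi^nL-L=\sum_{k=0}^{n-1}\phi^kA\in\Nef(X)$ by itself does not place $L$ in $\Nef(X)$, and ``strict contraction of $\phi^{-1}$'' is only a statement about spectral radius, guaranteeing convergence of the Neumann series but saying nothing about which cone the limit lies in. Rescaling $A$ is vacuous: $L$ depends linearly on $A$ and ampleness is a cone condition, so $L(NA)=NL(A)$ and nothing changes. The ingredient you are missing is that $\phi^{-1}=\deg(f)^{-1}f_*$ on $N^1(X)_\RR$ preserves $\Nef(X)$ and maps $\Amp(X)$ into $\Amp(X)$: this follows from the projection formula $(f_*D)\cdot C=D\cdot f^*C$ together with the very same effectivity-preservation of $f^*$ on $N_1(X)$ that you invoked for the forward direction (so $f_*D$ is nef for $D$ nef, and $f_*H$ is ample for $H$ ample by Kleiman's criterion, since $f^*$ is injective on $N_1$). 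Once this is in hand the argument closes cleanly and no perturbation estimate is required: each summand $\phi^{-n}A$ is nef, so $L$ is nef as a convergent series in the closed cone $\Nef(X)$; then $A+L$ is ample, and $L=\phi^{-1}(A+L)$ is ample because $\phi^{-1}$ carries ample classes to ample classes.
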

		
		From \ref{thm:intampeigen} we see that the composition of int-amplified endomorphisms are int-amplified. We naturally obtain a sub-monoid of all surjective endomorphisms. 
		
		\begin{definition}[The monoid of surjective endomorphisms]\label{def:IntSur}
			Let $X$ be a projective variety defined over $\Qb$. We let $\Send(X)$ be the monoid of surjective endomorphisms of $X$. We let $\Iamp(X)$ be the collection of int-amplified endomorphisms of $X$.
		\end{definition}
		
		The idea behind int-amplified endomorphisms is that if $\Iamp(X)\neq \emptyset$ then we have a sub-monoid $\Iamp(X)\subseteq \Send(X)$ that can be used to study $\Send(X)$. We will see that the existence of $f\in \Iamp(X)$ has consequences for the birational geometry of $X$. The basic properties of int-amplified endomorphisms are the following.
		
		\begin{theorem}[Lemma 5.2 \cite{MR4070310}]\label{thm:propertiesofintam}
			Let $X,Y$ be normal projective varieties. Let $f\colon X\ra X$ and $g\colon Y\ra Y$ be surjective endomorphisms.
			
			\begin{enumerate}
				\item If $\pi\colon X\ra Y$ is a surjective endomorphism and $f$ is int-amplified with $g\circ\pi=\pi\circ f$ then $g$ is int-amplified.
				\item  If $\dim X=\dim Y$ and $\pi\colon X\dashrightarrow Y$ is a dominant rational map with $g\circ\pi=\pi\circ f$ then $g$ is int-amplified.
				\item If $X$ is $\QQ$-factorial and $f$ is int amplified then $-K_X\Num E$ where $E$ is an effective $\QQ$- divisor. In particular if $\Alb(X)=0$ then $\kappa(-K_X)\geq 0$. 
			\end{enumerate}
		\end{theorem}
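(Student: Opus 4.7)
The plan is to deduce (1) and (2) from the spectral characterization of int-amplified morphisms (Proposition \ref{thm:intampeigen}), and to attack (3) via the ramification divisor together with a Neumann-series expansion. For (1), the surjectivity of $\pi\colon X\ra Y$ makes the pullback $\pi^{*}\colon N^{1}(Y)_{\RR}\hookrightarrow N^{1}(X)_{\RR}$ injective, and the semi-conjugacy $g\circ\pi=\pi\circ f$ translates into the operator identity $\pi^{*}\circ g^{*}=f^{*}\circ\pi^{*}$. Thus $\pi^{*}N^{1}(Y)_{\RR}$ is an $f^{*}$-invariant subspace on which $f^{*}$ is conjugate (via $\pi^{*}$) to $g^{*}$, so every eigenvalue of $g^{*}$ is an eigenvalue of $f^{*}$ and, by hypothesis, has modulus strictly greater than $1$; Proposition \ref{thm:intampeigen} in the reverse direction then gives that $g$ is int-amplified. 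For (2), resolve the indeterminacy of $\pi$ on a smooth variety $Z$ with morphisms $p\colon Z\ra X$, $q\colon Z\ra Y$ satisfying $\pi\circ p=q$, and define $\pi^{*}:=p_{*}q^{*}$ on $N^{1}$; since $\dim X=\dim Y$, $\pi$ is generically finite of some degree $d$ and $\pi_{*}\pi^{*}=d\cdot\textnormal{id}$, so $\pi^{*}$ remains injective. Functoriality of rational-map pullback gives $\pi^{*}g^{*}=f^{*}\pi^{*}$ on $N^{1}$, and the eigenvalue comparison from (1) finishes (2).

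For (3) I would begin with the ramification formula $K_{X}=f^{*}K_{X}+R_{f}$ with $R_{f}\geq 0$ effective, valid because $X$ is $\QQ$-factorial and the endomorphism $f$ of the projective variety $X$ is finite. Rewriting this as $(f^{*}-\textnormal{id})(-K_{X})=R_{f}$ in $N^{1}(X)_{\RR}$ and using the int-amplified hypothesis that all eigenvalues of $f^{*}$ have modulus $>1$, the operator $(f^{*})^{-1}$ is a strict contraction on $N^{1}(X)_{\RR}$, so $f^{*}-\textnormal{id}$ is invertible with Neumann-series inverse $\sum_{k\geq 1}(f^{*})^{-k}$. Combining this with the identity $(f^{*})^{-1}=\tfrac{1}{\deg f}f_{*}$ (from $f_{*}f^{*}=\deg f\cdot\textnormal{id}$) gives
\[
-K_{X}=\sum_{k\geq 1}\frac{1}{\deg f^{k}}(f^{k})_{*}R_{f}\qquad\text{in }N^{1}(X)_{\RR},
\]
a geometrically convergent sum of classes each represented by an effective $\QQ$-divisor; in particular $-K_{X}$ is pseudo-effective. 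To upgrade this to an actual numerical equivalence with an effective $\QQ$-divisor $E$, I would use the truncation $E_{N}:=\sum_{k=1}^{N}\frac{1}{\deg f^{k}}(f^{k})_{*}R_{f}$, which is itself effective, and control the tail $-K_{X}-E_{N}=(f^{*})^{-N}(-K_{X})$ by its geometric contraction rate, exploiting the rationality of $f^{*}$ on the N\'eron--Severi lattice to close the Neumann expansion inside the effective cone at some finite iterate.

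The ``in particular'' clause then follows at once: $\Alb(X)=0$ forces $h^{1}(X,\OO_{X})=0$ in characteristic zero, so $\Pic^{0}(X)$ is torsion and numerical equivalence agrees with $\QQ$-linear equivalence on Cartier divisor classes; hence $-mK_{X}\sim mE$ for some $m\geq 1$, giving $\kappa(-K_{X})\geq 0$. The main obstacle is the last step of (3): the Neumann-series argument yields pseudo-effectivity of $-K_{X}$ cleanly and canonically, but promoting this to a genuine numerical equivalence with a single effective $\QQ$-divisor requires extra input beyond pure spectral theory (the effective cone is not closed in general). Parts (1) and (2), by contrast, reduce to linear algebra once the equivariant pullback on $N^{1}$ is in place.
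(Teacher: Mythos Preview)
The paper does not prove this theorem: it is quoted as background from \cite{MR4070310} in the preliminaries, with no argument supplied. There is therefore no in-paper proof to compare your proposal against.

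On the merits of your proposal: parts (1) and (2) are correct and are the standard arguments. The spectral characterization (Proposition~\ref{thm:intampeigen}), the injectivity of $\pi^{*}$ on $N^{1}$, and the intertwining $\pi^{*}g^{*}=f^{*}\pi^{*}$ together force every eigenvalue of $g^{*}$ to occur among those of $f^{*}$; the rational-map version via a resolution and the identity $\pi_{*}\pi^{*}=(\deg\pi)\cdot\textnormal{id}$ is likewise the expected route.

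For (3) your Neumann-series computation correctly yields $-K_{X}\in\Eff(X)$, i.e.\ pseudo-effectivity, and you are right to flag the remaining gap: this is strictly weaker than $-K_{X}\Num E$ with $E$ an honest effective $\QQ$-divisor, since the effective cone need not be closed. Your truncation $E_{N}$ is effective, but the remainder $-K_{X}-E_{N}=(f^{*})^{-N}(-K_{X})$ carries no sign information without already knowing the answer, so the ``close the expansion inside the effective cone at some finite iterate'' step is circular as written. Closing this genuinely requires an additional cone-theoretic argument (of the sort carried out in the cited reference) that you have not supplied. Your derivation of the ``in particular'' clause from the main assertion, via $\Alb(X)=0$ forcing numerical and $\QQ$-linear equivalence to coincide, is fine once the main assertion is in hand.
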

Our interest in int-amplified endomorphisms comes from the following definition and application.
		\begin{definition}[The equivariant MMP:Meng-Zhang in \cite{MR4117085}]\label{def:equivariantMMP}
			
			Consider a sequence	of dominant rational maps
			
			\begin{align}\label{eq:equivraiantMMP}
				X_1\dashrightarrow X_2\dashrightarrow X_3\dots\dashrightarrow X_r
			\end{align}	
			
			such that each $X_i$ is a normal projective variety. Let $f=f_1\colon X_1\ra X_1$ be a surjective endomorphism. We say that \ref{eq:equivraiantMMP} is $f$-equivariant if there are surjective endomorphisms $f_i\colon X_i\ra X_i$ such that $g_i\circ f_{i+1}=f_i\circ g_i$ for all $i$, where $g_i\colon X_i\dashrightarrow X_{i+1}$ is the dominant rational mapping of \ref{eq:equivraiantMMP}.

		\end{definition}
		
		\begin{theorem}[The equivariant MMP of Meng-Zhang: Theorem \cite{MR4070310}]\label{thm:MengZhangMMP}
			Let $X$ be a $\QQ$-factorial projective variety	defined over $\Qb$ with at worst terminal singularities admitting an int-amplified endomorphism.
			
			\begin{enumerate}
				\item There are only finitely many $K_X$ negative extremal rays of $X$. Moreover if $f\colon X\ra X$ is a surjective endomorphism then there is some $n\in \ZZ_{>0}$ such that $f^n_*\colon \NE(X)_\RR\ra \NE(X)_\RR$ fixes every $K_X$ negative extremal ray. Let $R$ be any extremal $K_X$ negative extremal ray with contraction $\phi_R\colon X\ra Y_R$. Then there is a surjective endomorphism $g_R\colon Y_R\ra Y_R$ such that $g_R\circ \phi_R=\phi_R\circ f^n$. Moreover if $R$ is a flip and $\psi_R^+$ is the associated birational mapping $X\ra X_R^+$ then the induced rational mapping $f_R^+\colon X_R^+ \dashleftarrow X_R^+$ extends to a morphism $f_R^+\colon X_R^+\ra X_R^+$.
				
				\item Then for any surjective morphism $f\colon X\ra X$  there is some $n$ and a $f^n$ equivariant MMP for $f^n$ g given by \[X_1\dashrightarrow X_2\dashrightarrow X_3\dots\dashrightarrow X_r.\]
				Let $g_i\colon X_i\dashrightarrow X_{i+1}$. Then we have that.
				
				\begin{enumerate}
					\item Each $g_i$ is a contraction of a $K_X$ negative extremal ray.
					\item $X_r$ is a $Q$-Abelian variety. Note that $X_r$ might be a point. In fact there is there is a surjective endomorphism $h\colon A\ra X_r$ where $h$ is a finite and $A$ is an abelian variety. Moreover there is a surjective endomorphism $w\colon A\ra A$ such that $w\circ h=f_r\circ w$. The existence of $h$ is the definition of $Q$-Abelian, the theorem provides that the morphism $g_r$ commutes with a morphism of the covering abelian variety. In fact this holds for any surjective endomorphism of a $Q$-Abelian variety.
				\end{enumerate}	
				
				\item Let $f\colon X\ra X$ be a surjective endomorphism. Let \[X_1\dashrightarrow X_2\dashrightarrow X_3\dots\dashrightarrow X_r\] be any MMP where the $g_i\colon X_i\ra X_{i+1}$ are divisorial or fibering contractions. Then there is some $n$ such that there are surjective endomorphisms $f_i\colon X_{i}\ra X_i$ making the MMP $f$-equivariant. 
				\end{enumerate}
			\end{theorem}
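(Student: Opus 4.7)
The plan is to exploit the strong birational consequences of the existence of an int-amplified endomorphism. By Theorem \ref{thm:propertiesofintam}(3), we have $-K_X \Num E$ for some effective $\QQ$-divisor $E$, so $-K_X$ is pseudo-effective. For a $\QQ$-factorial projective variety with terminal singularities, this already forces the $K_X$-negative part of $\NE(X)$ to be rational polyhedral by the cone theorem, giving finitely many $K_X$-negative extremal rays. Next, since $f$ is surjective we have $f^*K_X \Num K_X + R_f$ with $R_f$ effective, so the continuous map $f_*$ preserves $\NE(X)$ and sends the finite set of $K_X$-negative extremal rays into itself; as this set is finite and $f_*$ has all eigenvalues of modulus greater than $1$ by Proposition \ref{thm:intampeigen} (hence is injective), $f_*$ permutes these rays, and some iterate $f^n$ fixes each of them individually.

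With the $K_X$-negative rays fixed, I would descend $f^n$ along their contractions. Given a fixed ray $R$ with contraction $\phi_R\colon X \ra Y_R$, the rigidity lemma applies: each fiber of $\phi_R$ is covered by curves in the class of $R$, which are mapped by $f^n$ to curves again in the class of $R$ and hence contracted by $\phi_R$; thus $\phi_R \circ f^n$ is constant on fibers of $\phi_R$ and factors as $g_R \circ \phi_R$ for a morphism $g_R\colon Y_R \ra Y_R$. For a flipping contraction $\phi_R$ with flip $\psi_R^+\colon X \dra X_R^+$, one gets only a rational map $f_R^+\colon X_R^+ \dra X_R^+$ a priori; to upgrade to a morphism, the indeterminacy locus would have to be contained in the exceptional locus of $\psi_R^+$, and one uses the universal property of the flip (together with $\QQ$-factoriality and terminality of $X_R^+$) to extend $f_R^+$ regularly.

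Part (2) is then obtained by iteration: at each step pick any $K$-negative extremal ray of the current variety, perform the divisorial, fibering, or flipping contraction, and lift an iterate of the endomorphism via the machinery of part (1), noting that by Theorem \ref{thm:propertiesofintam}(1)--(2) each successor variety again admits an int-amplified endomorphism so the construction can be repeated. Standard termination results for the MMP in the terminal $\QQ$-factorial setting ensure that this process ends at some $X_r$. The main obstacle, as I see it, is identifying $X_r$ as $Q$-abelian: by construction $K_{X_r}$ is nef and $X_r$ admits an int-amplified endomorphism $f_r$. One must then invoke a structure theorem showing that any such variety — nef canonical class, admitting an int-amplified endomorphism — is necessarily $Q$-abelian, producing an abelian variety $A$ with finite cover $h\colon A \ra X_r$ and a lift $w$ of $f_r$ (with both objects ultimately coming from the Albanese of a resolution). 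Finally, part (3) is a byproduct of the induction used in part (2): an arbitrary MMP consisting only of divisorial and fibering contractions can be made $f^n$-equivariant by repeatedly applying part (1) to each step, since each intermediate variety still inherits an int-amplified endomorphism and thus remains within the scope of the lifting mechanism.
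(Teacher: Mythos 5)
This statement is presented in the paper as a \emph{cited} result (attributed to Meng--Zhang, \cite{MR4070310}), not as something the paper proves; so there is no internal proof to compare against. Evaluating your sketch on its own merits, it correctly identifies the overall strategy (finiteness and $f_*$-permutation of the $K_X$-negative rays, descent along contractions via rigidity, iteration to a minimal model, structure theorem at the end), but it has two substantive gaps and one misapplication.

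First, the finiteness of $K_X$-negative extremal rays is \emph{not} a consequence of the cone theorem plus pseudoeffectivity of $-K_X$. The cone theorem gives local rational polyhedrality of $\NE(X)_{K_X<0}$, but the rays may accumulate toward the $K_X^\perp$ hyperplane; blow-ups of $\PP^2$ at nine general points already exhibit infinitely many $(-1)$-curves while $-K_X$ remains effective. The finiteness here is a genuinely hard theorem whose proof in Meng--Zhang hinges on the dynamics of $f^*$ on the cone, not on the cone theorem alone. Second, you invoke Proposition \ref{thm:intampeigen} to conclude that $f_*$ is injective, but the $f$ in part (1) is an \emph{arbitrary} surjective endomorphism and need not be int-amplified --- only some endomorphism of $X$ is assumed int-amplified. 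Fortunately the conclusion you need (that $f_*$ permutes the finitely many rays, so some iterate fixes each) holds for a cheaper reason: any surjective endomorphism of a normal projective variety induces a bijection on $N_1(X)_\RR$, since $f_*f^* = (\deg f)\cdot\iden$. Finally, the claims that the flip descends to a morphism and that a minimal model with nef canonical class admitting an int-amplified endomorphism is $Q$-abelian are both acknowledged by you as black-box invocations; that is fair for a sketch, but one should be clear that these two steps (especially the latter, which rests on the Albanese analysis and the positivity of $-K_X$) carry most of the weight in the original Meng--Zhang proof and are not recoverable from the machinery you have already set up.
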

			
\begin{subsection}{Canonical height functions and arithmetic degrees.}\label{subsec:canonicalheights}
To efficiently compute arithmetic degrees we will need the theory of heights. Our main reference is \cite{hindry2013diophantine}. Recall that height functions provide a way to turn geometric relationships involving divisors into arithmetic relationships involving height functions. For every line bundle $L$ on our normal projective variety $X$ there is an associated function $h_L\colon X(\overline{\QQ})\ra \RR$. The construction of $h_L$ is as follows. Write $L=D_1-D_2$ where $D_1,D_2$ are very ample divisors and then mimic construction in the introduction involving a height on projective space. Given a surjective endomorphism, $f\colon X\ra X$ we will study the dynamics of $f$ through the dynamics of its pull-back action on $N^1(X)_\RR$. The dynamics of a linear mapping is captured by its eigenvalues, so we are interested in those $D$ such that $f^*D\Lin \lambda D$ or $F^*D\Num \lambda D$ for some $\lambda\neq 0$. Associated to the divisor $D$ is a height function $h_D$. However, we have some extra data, namely that $D$ is preserved by $f$. This allows us to find a specific height function (rather then just an equivalence class of height functions) to study. Our main tool is the following.
\begin{theorem}[{\cite[Theorem 5]{MR4080251}},{\cite[Theorem 1.1]{MR1255693}}]\label{thm:canheight1}
		Let $X$ be a normal projective variety and $f\colon X\ra X$ a surjective endomorphism. Let $D\in \textnormal{CDiv}_\RR(X)$ and suppose that $f^*D\equiv_{\textnormal{Num}}\lambda D$ for some $\lambda>\sqrt{\lambda_1(f)}$. Then 
		\begin{enumerate}
			\item For all $P\in X(\bar{\QQ})$ the limit $\hat{h}_D(P)=\lim_{n\ra\infty }\dfrac{h_D(f^n(P))}{\lambda^n}$ exists for any choice of height function $h_D$ associated to $D$.
			\item We have that $\hat{h}_D(f^n(P))=\lambda^n\hat{h}_D(P)$ and that $\hat{h}_D=h_D+O(\sqrt{h^+_X}\ )$ where $h_X$ is any ample height on $X$ and $h_X^+=\max\{1,h_X\}$. If $f^*D\sim_\QQ \lambda D$ then $\hat{h}_D=h_D+O(1).$
			\item If $\hat{h}_D(P)\neq 0$ then $\alpha_f(P)\geq \lambda$.
			\item If $\lambda=\lambda_1(f)$ and $\hat{h}_D(P)\neq 0$ then $\alpha_f(P)=\lambda_1(f)$. 
			\item If we are working over a number field and $D$ is ample then $\hat{h}_D(P)=0\iff P$ is pre-periodic for $f$.   
		\end{enumerate}
	\end{theorem}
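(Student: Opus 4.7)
My plan is to run Tate's classical telescoping construction, with the $\sqrt{\lambda_1(f)}$ threshold playing exactly the role needed to ensure convergence of the relevant geometric series. Fix any ample height $h_X$ on $X$ and any height function $h_D$ representing $D$. Because $f^*D \Num \lambda D$, the class $f^*D - \lambda D$ has vanishing intersection with $H^{\dim X - 1}$ for every ample $H$, and Siu's inequality (more precisely, its arithmetic manifestation as the Kawaguchi-Silverman height comparison) yields
$$h_D(f(Q)) = \lambda\, h_D(Q) + O\!\left(\sqrt{h_X^+(Q)}\right)$$
for all $Q \in X(\overline{\QQ})$. Combined with the elementary growth estimate $h_X^+(f^n(P)) = O(\lambda_1(f)^n)$, the consecutive differences in the Tate sequence satisfy
$$\left|\frac{h_D(f^{n+1}(P))}{\lambda^{n+1}} - \frac{h_D(f^n(P))}{\lambda^n}\right| = O\!\left(\frac{\lambda_1(f)^{n/2}}{\lambda^{n+1}}\right),$$
whose geometric ratio $\sqrt{\lambda_1(f)}/\lambda$ lies strictly in $(0,1)$ by hypothesis. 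The Cauchy criterion proves (1), and bounding the tail of the series gives the error estimate $\hat{h}_D = h_D + O(\sqrt{h_X^+})$ in (2). The identity $\hat{h}_D \circ f = \lambda \hat{h}_D$ follows by reindexing the defining limit, and if $f^*D \Lin \lambda D$ the pointwise comparison upgrades to $h_D(f(Q)) = \lambda h_D(Q) + O(1)$, which propagates through the telescoping to give $\hat{h}_D = h_D + O(1)$.

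For (3), write $D = A_1 - A_2$ with $A_1, A_2$ very ample and dominated by a common very ample $H$. Then $|h_D| \leq 2 h_H^+ + O(1)$, so assuming $\hat{h}_D(P) \neq 0$,
$$h_H^+(f^n(P)) \geq \tfrac{1}{2}|h_D(f^n(P))| + O(1) = \tfrac{1}{2}|\hat{h}_D(P)|\,\lambda^n + O(\lambda_1(f)^{n/2}),$$
and taking $n$-th roots gives $\alpha_f(P) \geq \lambda$. Assertion (4) is then immediate from the Kawaguchi-Silverman upper bound $\alpha_f(P) \leq \lambda_1(f)$, which squeezes $\alpha_f(P)$ between $\lambda_1(f)$ and itself.

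For (5), assume $D$ is ample and everything is defined over a number field $K$. If $P$ is preperiodic the orbit is finite and $\hat{h}_D(P)=0$ is forced by the functional equation applied to an iterate landing in the periodic cycle. Conversely, if $\hat{h}_D(P)=0$ then $\hat{h}_D(f^n(P))=0$ for all $n$, so $h_D(f^n(P)) = O(\sqrt{h_D^+(f^n(P))})$ (taking $h_X = h_D$, legitimate since $D$ is ample); this forces $h_D(f^n(P))$ to remain bounded along the orbit, and Northcott's theorem applied to the ample $D$ and the fixed degree $[K(P):\QQ]$ shows the forward orbit is finite.

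The technical heart of the argument is the pointwise estimate $h_D \circ f = \lambda h_D + O(\sqrt{h_X^+})$, which is the arithmetic reflection of Siu's inequality applied to the numerical class $f^*D - \lambda D$; once this is granted, everything else is bookkeeping on top of the Tate telescoping and Northcott's theorem. The quantitative strength of the hypothesis $\lambda > \sqrt{\lambda_1(f)}$ is tuned precisely so the resulting geometric series converges, explaining why the threshold $\sqrt{\lambda_1(f)}$ and not some weaker bound appears in the statement.
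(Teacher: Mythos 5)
The paper states this theorem purely by citation (Kawaguchi--Silverman and Call--Silverman) and reproduces no proof, so there is no in-paper argument to compare against; I will assess your sketch on its own. The architecture you describe — the Tate telescoping, the key pointwise comparison $h_D\circ f = \lambda h_D + O(\sqrt{h_X^+})$ coming from the height bound for numerically trivial divisors, and the downstream deductions for (3)--(5) — is the right reconstruction and the parts (3), (4), (5) are handled correctly once (1) and (2) are in place.

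There is, however, a genuine gap in your derivation of the uniform error term $\hat{h}_D = h_D + O(\sqrt{h_X^+})$. You invoke the ``elementary growth estimate $h_X^+(f^n(P)) = O(\lambda_1(f)^n)$'' and then sum the tail of the telescoping series. As written, the implied constant in that estimate is allowed to depend on $P$; if it does, the telescoping only gives a pointwise bound $\hat{h}_D(P)-h_D(P) = O_P(1)$, not the uniform bound $O(\sqrt{h_X^+(P)})$ stated in (2) — and it is the uniform bound, applied at the points $f^n(P)$, that you actually use in the proof of (3). The input one needs is the much less elementary uniform estimate $h_X^+(f^n(P)) \leq C_\epsilon (\lambda_1(f)+\epsilon)^n\, h_X^+(P)$ for every $\epsilon>0$, with $C_\epsilon$ independent of $P$ and $n$. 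This is a nontrivial lemma (it is a key step in Kawaguchi--Silverman and in Matsuzawa's work): the naive comparison $f^*H \leq C H$ gives a $C$ that can be much larger than $\lambda_1(f)$, and pushing the constant down to $\lambda_1(f)+\epsilon$ requires controlling the pullback action $(f^*)^n$ on $N^1(X)_\RR$ together with the bounded-difference property of heights of numerically equivalent divisors. Note also that the $\epsilon$ (or an explicit polynomial-in-$n$ factor from Jordan blocks of $f^*$) is generally unavoidable; your telescoping still converges because the strict hypothesis $\lambda>\sqrt{\lambda_1(f)}$ leaves room to choose $\epsilon$ small, but this should be said explicitly. Once that lemma is granted, the remainder of your sketch is sound.
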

The following generalization by Kawaguchi and Silverman is what constrains the possible values of the arithmetic degree.
	\begin{theorem}[{\cite[Section 4]{MR4068299}}]\label{theorem:KS2} With the above notation in place let $P\in X(\bar{\QQ})$. 
		
		\begin{enumerate}
			\item Then $\alpha_f(P)=1$ or $\alpha_f(P)=\mid \lambda_i\mid$. More precisely suppose that $\hat{h}_{D_i}(P)\neq 0$ for some $1\leq i\leq \sigma$. Let $k$ be the smallest index with $\hat{h}_{D_k}(P)\neq 0$. Then $\alpha_f(P)=\mid \lambda_k\mid$. On the other hand if $\hat{h}_{D_k}(P)=0$ for all $1\leq k\leq \sigma$ then $\alpha_f(P)=1$. 
			\item In particular if $\mid \lambda_i\mid =\lambda_1(f)$ for $i=1,...,l$ and $\mid\lambda_{l+1}\mid <\lambda_1(f)$ then $\alpha_f(P)=\lambda_1(f)\iff \hat{h}_{D_i}(P)\neq 0$ for some $1\leq i\leq l$ . 
		\end{enumerate} 	
	\end{theorem}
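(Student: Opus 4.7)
The proof proceeds by establishing matching lower and upper bounds on $\alpha_f(P)$, with the two cases of the dichotomy corresponding to whether any of the canonical heights $\hat h_{D_i}$ is nonzero at $P$. For the lower bound, suppose $k$ is the smallest index with $\hat h_{D_k}(P) \neq 0$. Then Theorem \ref{thm:canheight1}(3), applied directly to the eigendivisor $D_k$ with eigenvalue $\lambda_k$, immediately yields $\alpha_f(P) \geq |\lambda_k|$.

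For the upper bound, the plan is to estimate $h_H(f^n(P))$ for a fixed ample divisor $H$ on $X$ by decomposing $H$ in a generalized eigenbasis of $f^*$ on $N^1(X)_\RR$. Write
\[
H \;\equiv_{\textnormal{Num}}\; \sum_{i=1}^{\sigma} c_i D_i \;+\; R,
\]
where $R$ lies in the sum of generalized eigenspaces for the eigenvalues $\mu$ of $f^*$ with $|\mu| \leq \sqrt{\lambda_1(f)}$. For each $i \leq \sigma$, Theorem \ref{thm:canheight1}(2) gives
\[
h_{D_i}(f^n(P)) = \lambda_i^n \,\hat h_{D_i}(P) + O(\sqrt{h_X^+(f^n(P))}),
\]
and the leading term vanishes for $i < k$ while being $O(|\lambda_k|^n)$ for $i \geq k$. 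A standard Jordan block estimate on the complementary part bounds $h_R(f^n(P))$ by $O(n^d \sqrt{\lambda_1(f)}^{\,n}) + O(\sqrt{h_X^+(f^n(P))})$ for some integer $d$ depending only on the Jordan structure of $f^*$.

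Combining these and using the general bound $h_X^+(f^n(P)) = O((\lambda_1(f) + \epsilon)^n)$, which follows from $\alpha_f(P) \leq \lambda_1(f)$, the hypothesis $|\lambda_k| > \sqrt{\lambda_1(f)}$ forces every error term to be $o(|\lambda_k|^n)$. Hence $h_H(f^n(P)) = O(|\lambda_k|^n)$, so $\alpha_f(P) \leq |\lambda_k|$. In the alternative case where $\hat h_{D_i}(P) = 0$ for every $i$, the same estimate gives $\alpha_f(P) \leq \sqrt{\lambda_1(f)}$, and a standard reduction in the Kawaguchi--Silverman framework, controlling height growth in the residual eigenspaces via iteration together with the elementary theory of small heights, then forces $\alpha_f(P) = 1$. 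Part (2) is an immediate corollary: if $|\lambda_i| = \lambda_1(f)$ precisely for $i = 1,\ldots,l$, then by (1) the smallest index $k$ with $\hat h_{D_k}(P) \neq 0$ lies in $\{1,\ldots,l\}$ if and only if $\alpha_f(P) = \lambda_1(f)$.

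The main technical obstacle is controlling the residual contribution from the eigenspaces with $|\mu| \leq \sqrt{\lambda_1(f)}$, since no canonical height is available there. The entire argument turns on the strict inequality $|\lambda_k| > \sqrt{\lambda_1(f)}$, which is just strong enough to absorb both the polynomial Jordan correction and the square-root error term produced by Theorem \ref{thm:canheight1}(2).
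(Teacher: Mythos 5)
The paper does not prove this result; it is cited verbatim from \cite[Section 4]{MR4068299}, so there is no in-paper argument to compare against. Your lower bound via Theorem~\ref{thm:canheight1}(3) and the skeleton of the upper bound in the case where some $\hat h_{D_k}(P)\ne 0$ are reasonable, and part~(2) is indeed an immediate corollary of part~(1). But there are two genuine gaps.

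First, the decomposition $H\equiv_{\textnormal{Num}}\sum_i c_i D_i + R$, with each $D_i$ an honest eigendivisor ($f^*D_i\equiv_{\textnormal{Num}}\lambda_i D_i$) and $R$ confined to the generalized eigenspaces of small modulus, implicitly assumes $f^*$ is semisimple on the large-eigenvalue block. If a large eigenvalue carries a nontrivial Jordan block, its generalized eigendivisors also have modulus $>\sqrt{\lambda_1(f)}$ but do \emph{not} satisfy the hypothesis of Theorem~\ref{thm:canheight1}, which is stated for $D$ with $f^*D\equiv_{\textnormal{Num}}\lambda D$. Constructing canonical heights along Jordan chains and controlling the resulting polynomial-in-$n$ corrections is precisely the central technical content of \cite{MR4068299} (it is in the title), and your argument passes over it. Second, and more seriously, the step from ``$\hat h_{D_i}(P)=0$ for every $i$'' to ``$\alpha_f(P)=1$'' is not covered. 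Your estimate yields only $\alpha_f(P)\le\sqrt{\lambda_1(f)}$, and if $f^*$ has an eigenvalue $\mu$ with $1<|\mu|\le\sqrt{\lambda_1(f)}$, nothing you have written rules out $\alpha_f(P)=|\mu|$. The phrase ``a standard reduction in the Kawaguchi--Silverman framework\ldots together with the elementary theory of small heights'' is a placeholder, not an argument. The actual proof has to \emph{lower the threshold}: once the orbit-specific bound $h_X^+(f^n(P))=O(n^d\lambda_1(f)^{n/2})$ is in hand, the canonical-height limit $\lim_n h_D(f^n(P))/\lambda^n$ converges along this orbit for $|\lambda|>\lambda_1(f)^{1/4}$, producing further (orbit-dependent) canonical heights; one then repeats and iterates the bootstrap until the threshold is driven down to $1$. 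That bootstrap is where the real work lies, and it is absent from your sketch.
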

The basic properties of the arithmetic degree are now given.
\begin{proposition}[Properties of the arithmetic degree: \cite{MR4080251} and \cite{MR4068299}]\label{prop:propofalpha}
		Let $X$ be a normal projective variety defined over $\Qb$. Let $f\colon X\ra X$ be a dominant morphism.
		
		\begin{enumerate}
		\item The limit $\lim_{n\ra\infty}h_H^+(f^n(P))^\frac{1}{n}$ exists.
			\item For all $P\in X(\Qb)$ we have that 
			\[\alpha_f(P)=\vert \lambda \vert \]
			for some eigenvalue $\lambda$ of $f^*$ acting on $N^1(X)_\RR$. We can also take $\lambda$ to be an eigenvalue of $f^*$ acting on $\Pic(X)$.  
			\item $\alpha_f(P)\leq \lambda_1(f)$.
		\end{enumerate}	
	\end{proposition}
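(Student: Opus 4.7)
The plan is to prove item (3) first and then bootstrap (1) and (2) from it via the Jordan decomposition of $f^*$ on $N^1(X)_\RR$.

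For (3), fix an ample divisor $H$. Since $\lambda_1(f)$ equals the spectral radius of $f^*\colon N^1(X)_\RR \to N^1(X)_\RR$ and the nef cone is $f^*$-invariant and full-dimensional, a Perron--Frobenius-type argument yields, for every $\epsilon>0$, an ample $\RR$-divisor $A$ with $(\lambda_1(f)+\epsilon)A - f^*A$ nef. The height machine converts this to $h_A(f(P)) \leq (\lambda_1(f)+\epsilon) h_A(P) + O(1)$, which iterates to $h_A^+(f^n(P)) = O_P((\lambda_1(f)+\epsilon)^n)$. Comparing $h_A$ to $h_H$ by polynomial bounds and letting $\epsilon\to 0$ gives $\limsup_n h_H^+(f^n(P))^{1/n} \leq \lambda_1(f)$.

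For (1) and (2), put $f^*$ on $N^1(X)_\RR$ in Jordan normal form. For each eigenvalue $\mu_i$ with $|\mu_i|>\sqrt{\lambda_1(f)}$, Theorem \ref{thm:canheight1} supplies a canonical height $\hat{h}_{D_i}$ with $\hat{h}_{D_i}\circ f^n = \mu_i^n\, \hat{h}_{D_i}$. For eigenvalues in the range $1<|\mu_i|\leq \sqrt{\lambda_1(f)}$, one inductively constructs analogous canonical heights using the bound from (3) restricted to the $f^*$-invariant subspace complementary to the already-treated eigenvalues. Decomposing $h_H$ along a Jordan basis then yields
\[ h_H(f^n(P)) = \sum_{|\mu_i|>1} P_i(n)\, \mu_i^n\, c_i(P) + O(n^D), \]
where $P_i(n)$ is polynomial in $n$ coming from Jordan block sizes, $c_i(P)$ is the corresponding canonical height value at $P$, and the $O(n^D)$ absorbs contributions from eigenvalues with $|\mu|\leq 1$. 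Taking $n$-th roots, the dominant term comes from the $\mu_i$ of largest modulus with $c_i(P)\neq 0$, proving both that the limit in (1) exists and that it equals $|\mu_i|$, giving (2). To upgrade from $N^1(X)_\RR$ to $\Pic(X)$, I would use that $f^*$ acts rationally on $\mathrm{NS}(X)_\QQ$, so its eigenvalues are algebraic, and after a finite base change and passage to an iterate the relevant eigenvectors lift to $\Pic(X)_\QQ$ with unchanged absolute values.

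The main obstacle is constructing canonical heights in the range $1<|\mu|\leq \sqrt{\lambda_1(f)}$: Theorem \ref{thm:canheight1} only guarantees them above the $\sqrt{\lambda_1(f)}$ threshold, so extending the construction to all $|\mu|>1$ requires a delicate telescoping argument that carefully refines the estimate from (3) on each $f^*$-invariant subspace in turn. Once these canonical heights are in place, the rest of the argument is linear-algebraic assembly.
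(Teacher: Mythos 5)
The paper does not prove this proposition; it states it as a result cited directly from Kawaguchi--Silverman. So there is no internal argument to compare against, and your attempt has to stand on its own. Your treatment of item (3) is fine and standard: using the $f^*$-invariance of the nef cone to produce, for each $\epsilon>0$, an ample $\RR$-divisor $A$ with $(\lambda_1(f)+\epsilon)A-f^*A$ nef, then iterating the resulting height inequality, does give $\limsup_n h_H^+(f^n(P))^{1/n}\le\lambda_1(f)$.

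For items (1) and (2) there is a genuine gap, and it is exactly the one you flag as ``the main obstacle.'' The Tate-style telescoping that produces a canonical height for an eigendivisor $D$ with $f^*D\Num\mu D$ generates an error series of the shape $\sum_{k\ge 0}O(\sqrt{h_H^+(f^k(P))})/|\mu|^{k+1}$. Since part (3) only gives $h_H^+(f^k(P))=O((\lambda_1(f)+\epsilon)^k)$, the series converges only when $|\mu|>\sqrt{\lambda_1(f)}$, which is precisely the threshold in Theorem~\ref{thm:canheight1}. That threshold is not a deficiency of the cited theorem: below it the limit genuinely need not exist without further input. Your proposed fix---inductively restrict ``to the $f^*$-invariant subspace complementary to the already-treated eigenvalues''---is purely a decomposition of $N^1(X)_\RR$ and does not improve the growth estimate for $h_H^+(f^n(P))$, which is the quantity controlling the error term. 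What is actually required is a \emph{pointwise} refinement: for those $P$ at which every canonical height attached to an eigenvalue $|\mu_i|>\sqrt{\lambda_1(f)}$ vanishes, one first shows $h_H^+(f^n(P))=O(n^C\lambda_1(f)^{n/2})$, and only with this improved bound does the telescoping converge in the next window $\lambda_1(f)^{1/4}<|\mu|\le\sqrt{\lambda_1(f)}$; iterating the halving then descends through all moduli strictly greater than $1$. The induction is conditioned on the vanishing of the previously constructed heights along the orbit of the particular point $P$, not merely on a filtration of eigenspaces, and your sketch does not carry this out. Consequently the displayed expansion $h_H(f^n(P))=\sum_{|\mu_i|>1}P_i(n)\mu_i^n c_i(P)+O(n^D)$ is asserted rather than established, since the $c_i(P)$ in the lower range are precisely the objects whose existence is in question. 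The phrase ``a delicate telescoping argument'' names the missing step; it does not supply it.
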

We will also use the following.
	\begin{theorem}[{\cite[Theorem 1.8]{2007.15180}}]\label{theorem:MSS} Let $X$ be a projective variety defined over $\bar{\QQ}$ and $f\colon X\ra X$ be a surjective endomorphism with $\lambda_1(f)>1$. Then the set of points $P\in X(\bar{\QQ})$ with $\alpha_f(P)=\lambda_1(f)$ is Zariski dense.
	\end{theorem}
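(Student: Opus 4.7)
The plan is to find a nef $\RR$-Cartier eigenvector of $f^*$ for its spectral radius $\lambda_1(f)$, construct the associated canonical height via Theorem \ref{thm:canheight1}, and then verify that this canonical height is nonzero on a Zariski dense set of $\overline{\QQ}$-points; the implication $\hat h_D(P)\ne 0 \Rightarrow \alpha_f(P)=\lambda_1(f)$ then delivers the conclusion.

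First I would produce a nonzero class $D \in \Nef(X)_\RR$ with $f^*D \equiv \lambda_1(f) D$. Since $f^*$ is a continuous $\RR$-linear endomorphism of $N^1(X)_\RR$ preserving the full-dimensional, pointed, closed convex cone $\Nef(X)_\RR$, a standard Perron-Frobenius / Krein-Rutman argument yields a nef eigenvector whose eigenvalue is the spectral radius $\lambda_1(f)$. Because $\lambda_1(f) > 1$ is equivalent to $\lambda_1(f) > \sqrt{\lambda_1(f)}$, Theorem \ref{thm:canheight1} applies with $\lambda = \lambda_1(f)$ and produces a canonical height $\hat h_D$ satisfying $\hat h_D \circ f = \lambda_1(f)\,\hat h_D$, the estimate $\hat h_D = h_D + O(\sqrt{h_X^+}\,)$ for any ample height $h_X$, and the key implication $\hat h_D(P) \ne 0 \Rightarrow \alpha_f(P) = \lambda_1(f)$ from part (4).

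The heart of the proof is to verify that $S := \{P \in X(\overline{\QQ}) : \hat h_D(P) \ne 0\}$ is Zariski dense. I would argue by contradiction: suppose the Zariski closure $\overline{S}$ is a proper subvariety $V \subsetneq X$. Then on the dense open complement $U := X \setminus V$ every $\overline{\QQ}$-point satisfies $\hat h_D = 0$, and combining with the asymptotic $\hat h_D = h_D + O(\sqrt{h_X^+}\,)$ forces the uniform bound $h_D(P) = O(\sqrt{h_X^+(P)}\,)$ on $U(\overline{\QQ})$. To contradict this, I would select an irreducible curve $C \subseteq X$ by a Bertini-type complete intersection of hyperplane sections from a very ample class $H$, arranged so that $C \not\subseteq V$ and $D \cdot C > 0$. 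On the normalization of $C$ the height machine gives $h_D(P) = \tfrac{D\cdot C}{H\cdot C}\, h_H(P) + O(1)$ together with $h_X^+(P) = O(h_H(P))$, while Northcott supplies infinitely many points of bounded degree in $C(\overline{\QQ}) \cap U$ with $h_H(P) \to \infty$. This forces $h_D(P)$ to grow linearly in $h_H(P)$ rather than like $\sqrt{h_X^+(P)}$, the required contradiction.

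The main obstacle is guaranteeing a curve $C$ satisfying both $C \not\subseteq V$ and $D \cdot C > 0$ when $D$ is only nef, lies possibly on the boundary of $\Nef(X)_\RR$, and is possibly not big. If $D$ has degree zero against every curve meeting $U$, the direct curve-restriction argument collapses. To circumvent this degenerate case one can replace $D$ by a perturbation extracted from the generalized eigenspace (or Jordan block) of $f^*$ at $\lambda_1(f)$, or filter $N^1(X)_\RR$ by $f^*$-invariant subspaces and apply Theorem \ref{theorem:KS2} along the filtration to transfer the density conclusion from a more positive auxiliary eigendivisor down to $D$. Taming this degeneracy, and ruling out the pathological scenario in which $D$ pairs to zero with all sufficiently general curves, is the genuine technical content of the argument.
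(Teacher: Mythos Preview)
This theorem is not proved in the present paper; it is quoted from the cited reference and invoked as a black box. So there is no in-paper argument to compare your proposal against.

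That said, your outline is the standard route to this result and is essentially correct. Two points sharpen it. First, the ``obstacle'' you flag at the end is not a genuine obstacle. For a general complete intersection curve $C$ of $n-1$ members of $|mH|$ with $H$ ample, one has $D\cdot C = m^{n-1}(D\cdot H^{n-1})$, and for any nonzero nef $\RR$-class $D$ this is strictly positive: the curve class $H^{n-1}$ lies in the interior of $\NE(X)$, equivalently it pairs strictly positively with every nonzero nef divisor (a standard consequence of the Hodge index theorem on a general hyperplane-section surface, or of hard Lefschetz for $N^1$). Hence the Bertini curve you already chose always satisfies $D\cdot C>0$ and $C\not\subseteq V$, and the Jordan-block and filtration workarounds you sketch are unnecessary. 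Second, your appeal to Northcott is slightly misphrased: you do not need points of bounded degree, only a sequence of $\overline{\QQ}$-points on $C\cap U$ with $h_H\to\infty$, and such a sequence exists on any curve over $\overline{\QQ}$ (pull back points of growing height under any nonconstant morphism $C\to\PP^1$). With these two adjustments the argument goes through cleanly.
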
		
	
\end{subsection}
When working with abelian varieties, we will need results about the endomorphism ring.

\begin{theorem}[\cite{MR1503260},
				page 201 \cite{MumfordAV}]\label{thm:AlbertClass}
				Let $A$ be a $g$-dimensional abelian variety over an algebraically closed field $k$ with a chosen polarization. Let $D=\Endo(A)_\QQ$ and $\prime$ the associated Rosati involution. Then $D$ is a simple $\QQ$-algebra, of finite dimension with center $K$ a number field. Set $K_0=\{x\in K: x^\prime=x\}$. Then the pair $(D,\prime)$ is one of the following four types. Set
			 \[e=[K:\QQ],e_0=[K_0:\QQ],m^2=[D:K]\]
				and let $\rho$ be the Picard number of $A$.
				\begin{enumerate}
					\item $K_0=K=D$ with $\prime=\textnormal{id}_D$ with $K$ a totally real number field. In this case $\rho=e$ and $e\mid g$.
					\item $[D:K]=4$ with $K=K_0$ with $K$ a totally real number field. $D$ is a quaternion algebra over $K$. Moreover for each real embedding $\sigma\colon K\hookrightarrow \RR$ we have $D\otimes_{K,
						\sigma} 
					\RR\cong M_2(\RR)$ and $D\otimes_\QQ \RR\cong \prod_{\sigma\colon K\hookrightarrow 
						\RR}M_2(\RR)$. The isomorphism can be chosen so that the involution can be taken to be $(M_1,...,M_e)^\prime=(M_1^t,...,M_e^t)$. 
					
					We have $\rho=3e$ and $2e\mid g$. 
					\item $[D,K]=4$ with $K=K_0$ with $K$ a totally real number field. $D$ is a quaternion algebra over $K$. Moreover for each real embedding $\sigma\colon K\hookrightarrow \RR$ we have $D\otimes_{K,
						\sigma} 
					\RR\cong \mathbb{H}$ and $D\otimes_\QQ \RR\cong \prod_{\sigma\colon K\hookrightarrow 
						\RR}\mathbb{H}$. The isomorphism can be chosen so that the involution can be taken to be $(a_1,...,a_e)^\prime=(\bar{a}_1,...,\bar{a}_e)$. 
					
					We have $\rho=e$ and $2e\mid g$.
					\item$[D:K]=m^2$ with $K_0$ is a totally real number field, and $K$ is a totally imaginary quadratic extension of $K_0$. \[D\otimes_\QQ \RR\cong \prod_{\sigma\colon K_0\hookrightarrow 
						\RR}M_m(\CC)\] The isomorphism can be chosen so that the involution can be taken to be $(M_1,...,M_{e_0})^\prime=(\bar{M}^t_1,...,\bar{M}^t_{e_0})$. We have $\rho=e_0m^2$ and $e_0m^2\mid g$.
				\end{enumerate}

			\end{theorem}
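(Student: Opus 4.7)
The plan is to recognize that this is the classical Albert classification of endomorphism algebras of abelian varieties, so the proof will not be reconstructed from scratch but rather outlined: one reduces the problem to Albert's purely algebraic classification of finite-dimensional simple $\QQ$-algebras equipped with a positive involution, and then one checks how each type is realized on $A$. I would cite Mumford's book for the body of the argument and break the sketch into three conceptual steps.

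First I would establish the algebraic setup. The endomorphism ring $\Endo(A)$ is a finitely generated free $\ZZ$-module of rank at most $4g^2$, so $D=\Endo(A)_\QQ$ is a finite-dimensional $\QQ$-algebra. Using Poincaré complete reducibility, $A$ is isogenous to a product $\prod A_i^{n_i}$ of simple abelian varieties, whence $D$ is a finite product of matrix algebras over division algebras; after possibly replacing $A$ by a simple isogeny factor it suffices to classify the division algebra case. Next I would show that the Rosati involution $\prime$, defined via the polarization $\varphi\colon A\to A^\vee$ by $\alpha^\prime=\varphi^{-1}\circ\alpha^\vee\circ\varphi$, is a \emph{positive} involution, i.e.\ the reduced trace form $x\mapsto \text{Tr}_{D/\QQ}(x\cdot x^\prime)$ is positive definite. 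This is the crucial geometric input and is proved using that $(\alpha^*\varphi,\varphi,\ldots,\varphi)>0$ as an intersection number on $A$, combined with standard formulas expressing this intersection through the Rosati trace.

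Second I would invoke Albert's classification: a finite-dimensional division algebra $D$ over $\QQ$ with positive involution $\prime$, with center $K$ and $K_0=K^\prime$ the fixed subfield, falls into exactly the four types listed. The distinction between Types II and III is a local one: on each real place of $K_0=K$, the base-change $D\otimes_{K_0,\sigma}\RR$ is either $M_2(\RR)$ (whose unique positive involution up to inner twist is transpose) or the Hamilton quaternions $\HH$ (whose positive involution is conjugation); positivity forces the involution to match a standard one on each factor. Type IV is the CM case, where $K/K_0$ is CM and the involution restricts to complex conjugation, forcing $D\otimes_\QQ\RR\cong\prod_\sigma M_m(\CC)$ with the conjugate-transpose involution.

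Third I would derive the numerical constraints on $(g,\rho)$. The tangent representation $D\hookrightarrow \Endo_\CC(\text{Lie}(A))$ and the rational representation on $H_1(A,\QQ)\cong\QQ^{2g}$ give module structures whose existence forces divisibilities: the relevant simple module for $D$ has a fixed $\QQ$-dimension depending on the type, so $g$ must be a multiple of this dimension, giving $e\mid g$, $2e\mid g$, or $e_0m^2\mid g$. The Picard number $\rho$ equals the $\QQ$-dimension of the subspace of $\prime$-symmetric elements of $D$, via the identification of $N^1(A)_\QQ$ with $\{\alpha\in D:\alpha^\prime=\alpha\}$ through the polarization; a direct count of symmetric elements in each of the four model algebras yields $\rho=e$, $3e$, $e$, and $e_0m^2$ respectively. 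The hardest step to make precise is the positivity of the Rosati trace and the identification of $N^1(A)_\QQ$ with the symmetric part of $D$; everything else is bookkeeping within Albert's algebraic classification.
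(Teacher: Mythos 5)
The paper does not give a proof of this statement; it is cited directly from Albert's work and Mumford's \emph{Abelian Varieties}, and you correctly recognize this and treat the result as the classical Albert classification rather than attempting a self-contained derivation. Your three-step outline --- (i) finite-dimensionality of $D$, reduction via Poincar\'e reducibility, and positivity of the Rosati involution as the geometric input; (ii) Albert's purely algebraic classification of division algebras with positive involution, with the local $M_2(\RR)$ vs.\ $\HH$ dichotomy distinguishing Types II and III; (iii) the numerical constraints on $g$ from the rational representation on $H_1(A,\QQ)$ and on $\rho$ from identifying $N^1(A)_\QQ$ with the $\prime$-symmetric part of $D$ --- is an accurate summary of how the proof goes in Mumford's book. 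One small point worth flagging: as stated, the theorem asserts $D$ is a simple $\QQ$-algebra, which is only true when $A$ is isotypic (a power of a simple abelian variety); for a general $A$ one only gets a semisimple algebra. You implicitly handle this by reducing to a simple isogeny factor, which is the standard repair. Also note that the divisibility constraints $e \mid g$, $2e \mid g$, $e_0 m^2 \mid g$ are specific to characteristic zero (which is the relevant case for this paper since everything is over $\Qb$); in positive characteristic the constraints weaken, but that caveat lives in the sources, not in your sketch.
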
	
		When working with Abelian varieties, we will use the following. 	
	\begin{theorem}\label{thm:absetup}
				Let $A$ be an abelian variety defined over $\bar{\QQ}$. Fix an ample divisor $H$ on $A$. Then
				
				\begin{enumerate}
					\item The image of $N^1(A)_\QQ$ is precisely the set of $\alpha\in \Endo(A)_\QQ$ with $\alpha^\prime=\alpha$. That is the Neron-Severi space may be identified with the space of endomorphisms fixed by the Rosati involution.
					\item (\cite[Lemma 24]{MR4068299}) Let $f\colon A\ra A$ be an isogeny and $D\in N^1(A)_\RR$. Then 
					\[\Phi_{f^*D}=f^\prime\circ \Phi_D\circ f.\]
					\end{enumerate}

			\end{theorem}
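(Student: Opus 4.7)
The plan is to treat the two parts of the theorem by the standard machinery for abelian varieties, building on the construction $D\mapsto \Phi_D$ where $\Phi_D\colon A\to A^\vee$ is the homomorphism $x\mapsto t_x^*D - D$ (well-defined as a map into $\Pic^0(A)=A^\vee$ by the theorem of the square). Both parts are classical; part (2) is the content of the cited Kawaguchi--Silverman lemma, so my focus is really on laying out (1) cleanly and then executing the short functorial computation for (2).

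For part (1), I would first note that $D\mapsto \Phi_D$ factors through $N^1(A)=\Pic(A)/\Pic^0(A)$, giving an injection $N^1(A)_\QQ\hookrightarrow \Hom(A,A^\vee)_\QQ$. Using the chosen polarization, $\Phi_H\colon A\to A^\vee$ is an isogeny, and tensoring with $\QQ$ it becomes an isomorphism, so the assignment $\alpha\mapsto \Phi_H^{-1}\circ\alpha$ identifies $\Hom(A,A^\vee)_\QQ$ with $\Endo(A)_\QQ$. Under this identification the Rosati involution is by definition $\beta\mapsto \Phi_H^{-1}\circ\beta^\vee\circ\Phi_H$, where $\beta^\vee$ denotes the dual isogeny and we use the canonical double-duality identification $A^{\vee\vee}=A$. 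To match the image with the set of Rosati-fixed elements I would invoke the classical fact that $\Phi_D$ is symmetric, i.e.\ $\Phi_D^\vee = \Phi_D$ under double duality; this is standard (see Mumford, \emph{Abelian Varieties}, \S 20). Symmetry immediately shows that each $\Phi_H^{-1}\Phi_D$ is fixed by the Rosati involution. Surjectivity onto the symmetric part follows because both sides are $\QQ$-vector spaces of the same dimension: the rank of $N^1(A)$ equals the $\QQ$-dimension of the symmetric part of $\Endo(A)_\QQ$, a fact that underlies the Picard-number formulas in Theorem \ref{thm:AlbertClass}.

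For part (2), I would compute directly using the identity $f\circ t_x = t_{f(x)}\circ f$, valid because $f$ is a homomorphism of abelian varieties up to a fixed translation (which we may absorb into the choice of zero since $f^*D$ is determined only up to $\Pic^0$). Pulling back divisors,
\begin{equation*}
\Phi_{f^*D}(x) \;=\; t_x^*(f^*D) - f^*D \;=\; f^*\!\bigl(t_{f(x)}^*D - D\bigr) \;=\; f^*\bigl(\Phi_D(f(x))\bigr).
\end{equation*}
Since pullback of degree-zero line bundles along $f$ realizes the dual isogeny, this is $f^\vee\circ \Phi_D\circ f$ as maps $A\to A^\vee$. Translating through the identification of part (1), $f^\vee$ on the $A^\vee$ side corresponds to the Rosati dual $f'=\Phi_H^{-1}\circ f^\vee\circ\Phi_H$ on the $\Endo(A)_\QQ$ side, which yields $\Phi_{f^*D}=f'\circ\Phi_D\circ f$ as claimed.

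The main obstacle is purely bookkeeping: keeping straight the chain of canonical identifications (theorem of the square, double duality, the polarization isomorphism, and the definition of the Rosati involution) so that the symmetry statement in (1) and the conjugation formula in (2) come out with the correct order of composition. Once these identifications are set up once and for all, both parts become short verifications.
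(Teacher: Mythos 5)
The paper does not actually prove Theorem \ref{thm:absetup}; it states it as a recall of standard facts from the theory of abelian varieties, citing Mumford for the Albert classification context and Kawaguchi--Silverman, Lemma~24, for part~(2). So there is no in-paper argument to compare against. Your reconstruction follows the expected Mumford-style route, and the computation you give for part~(2) is correct: the chain
$t_x^*(f^*D)-f^*D = f^*\bigl(t_{f(x)}^*D-D\bigr)$, the identification of $f^*\colon\Pic^0\to\Pic^0$ with the dual isogeny $f^\vee$, and then inserting $\Phi_H\Phi_H^{-1}$ to convert $f^\vee$ into the Rosati dual $f'$ all go through as you say. One small remark on your framing of the translation issue: since the theorem takes $f$ to be an \emph{isogeny}, it is already a group homomorphism and $f\circ t_x = t_{f(x)}\circ f$ holds exactly; there is no translation to absorb.

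The one genuine soft spot is your surjectivity argument for part~(1). You want to conclude that $D\mapsto\Phi_H^{-1}\Phi_D$ hits all of $\{\alpha:\alpha'=\alpha\}$ by matching $\QQ$-dimensions, citing the Picard-number formulas of Theorem \ref{thm:AlbertClass} as the source of that dimension count. But those formulas (Type~I: $\rho=e$; Type~II: $\rho=3e$; etc.) are themselves \emph{consequences} of the identification $N^1(A)_\QQ\cong\Endo(A)_\QQ^{\mathrm{Sym}}$; using them to establish that identification is circular. The standard non-circular route is to prove surjectivity directly: every symmetric homomorphism $\phi\in\Hom(A,A^\vee)$ is, after multiplying by a positive integer, of the form $\Phi_L$ for some line bundle $L$ (Mumford, \emph{Abelian Varieties}, \S 20, via the Poincar\'e bundle, or over $\CC$ via Appell--Humbert), which gives surjectivity after tensoring with $\QQ$. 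With that replacement, your argument is a clean and complete proof of a statement the paper itself leaves to the references.
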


We will also need the following. Its interest is primarily to allow us to determine when $f^*\colon N^1(X)\ra N^1(X)$ is given by a scalar multiple of the identity matrix.

\begin{theorem}[{\cite[Theorem 4.8]{MR1832167}}]\label{thm:conethm}
			Let $A$ be a real $n\times n$ matrix. Then the following are equivalent.
			
			\begin{enumerate}
				\item $A$ is non-zero, diagonalizable, and all eigenvalues of $A$ have the same modulus, with $\rho(A)$ being an eigenvalue.
				\item There is a proper cone $K$ in $\RR^n$ with $A(K)\subseteq K$ and $A$ has an eigen-vector in the interior of $K$.
				
			\end{enumerate}
		\end{theorem}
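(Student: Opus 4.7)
The plan is to handle the two implications separately: the forward direction is constructive, while the reverse is the substantive half and proceeds via a Perron--Frobenius style spectral analysis of cone-preserving operators.

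For $(1)\Rightarrow(2)$, I would rescale to $B=A/\rho(A)$, which is diagonalizable over $\CC$ with all eigenvalues on the unit circle. Writing $B=PDP^{-1}$ with $D$ diagonal and unitary, the Hermitian form $H(x,y)=(P^{-1}x)^{*}(P^{-1}y)$ is $B$-invariant, and its real part restricted to $\RR^n$ gives a $B$-invariant Euclidean inner product $\langle\cdot,\cdot\rangle$ with associated norm $\|\cdot\|$ (positive definiteness follows from $H(x,x)=\|P^{-1}x\|^{2}$). Fix a real eigenvector $v$ of $A$ for the real eigenvalue $\rho(A)$, and for some $\theta\in(0,\pi/2)$ define
\[
K=\{\,x\in\RR^n:\langle x,v\rangle\geq\cos(\theta)\,\|x\|\,\|v\|\,\}.
\]
Convexity follows from the triangle inequality; closedness is clear; $v$ is manifestly interior; and pointedness follows since $x,-x\in K$ forces $\langle x,v\rangle=0$ and then $x=0$. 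Since $B$ fixes $v$ and preserves $\langle\cdot,\cdot\rangle$, we have $B(K)\subseteq K$, hence $A(K)=\rho(A)\cdot B(K)\subseteq K$.

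For $(2)\Rightarrow(1)$, let $v\in\textnormal{int}(K)$ satisfy $Av=\lambda v$. The conditions $v,\lambda v\in K$ together with $-v\notin K$ (by pointedness) force $\lambda>0$. I would first show $\lambda=\rho(A)$: for any eigenvalue $\mu$ of $A$ with complex eigenvector $w$, the vector $v+\epsilon w$ lies in $\textnormal{int}(K)$ for sufficiently small real $\epsilon$, so iteration gives $\lambda^{n}v+\epsilon\,\textnormal{Re}(\mu^{n}w)\in K$ for all $n$. Rescaling by $|\mu|^{-n}$ and passing to a subsequence along which $(\mu/|\mu|)^{n}$ converges, if $|\mu|>\lambda$ one produces a nonzero vector which, together with its negative (obtained by flipping the sign of $\epsilon$), lies in $K$, contradicting pointedness. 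Hence $\lambda=\rho(A)$.

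The genuinely hard step is the remaining content: ruling out eigenvalues of modulus strictly less than $\rho(A)$, forcing semisimplicity of $A$, and in particular forcing the whole spectrum onto the circle of radius $\rho(A)$. The naive iteration argument above is insensitive to small-modulus eigenvalues because they contribute decaying terms. I expect to leverage the dual picture: the dual cone $K^{*}\subseteq(\RR^n)^{*}$ is also proper, $A^{*}$ preserves it, and Perron--Frobenius applied to $A^{*}$ on $K^{*}$ produces an eigenvector of $A^{*}$ in $K^{*}$ for $\rho(A)$ which pairs nontrivially with $v$. Using this pairing together with the interior-eigenvector hypothesis, one can pin the spectral decomposition of $A$ to an invariant inner product (as in the forward direction) and conclude both diagonalizability and uniform modulus of the spectrum. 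This Vandergraft-style rigidity, rather than the eigenvalue bound from the iteration argument, constitutes the main obstacle in the proof.
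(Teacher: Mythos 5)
The paper does not prove this statement; it is cited verbatim to the reference, so there is no internal argument to compare against, and your proposal stands alone. Your $(1)\Rightarrow(2)$ is correct: the averaged invariant inner product and the ``ice-cream cone'' around $v$ do the job, and your construction in fact produces the stronger conclusion $A(K)=K$, since $B$ is an isometry fixing $v$. In $(2)\Rightarrow(1)$ the opening moves, forcing $\lambda>0$ and then $\lambda=\rho(A)$ by iteration, are also essentially sound once one replaces the complex eigenvector $w$ by its real and imaginary parts $w_{1},w_{2}$ and notes that for at least one of the two starting points $v\pm\epsilon w_{1}$, $v\pm\epsilon w_{2}$ the extracted limit is a nonzero vector, which then violates pointedness together with its negative.

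You stall at the remaining step for a very good reason: the implication $(2)\Rightarrow(1)$, as written with $A(K)\subseteq K$, is \emph{false}. Take $A=\textnormal{diag}(2,1)$ on $\RR^{2}$ and $K=\{(x,y):x\geq|y|\}$. Then $K$ is a proper cone, $A(K)\subseteq K$, and $(1,0)\in\textnormal{int}(K)$ is an eigenvector for $\rho(A)=2$; yet the eigenvalues $2$ and $1$ do not share a modulus, so $(1)$ fails. This example also defeats your proposed dual-cone rescue out of the box: here $K$ is self-dual, $A=A^{T}$, and $(1,0)\in\textnormal{int}(K^{*})$ is an $A^{T}$-eigenvector for $\rho(A)$, so nothing new is gained by dualizing. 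The hypothesis must be strengthened to $A(K)=K$ (which is exactly what your $(1)\Rightarrow(2)$ construction delivers, and which holds in every place the paper invokes the result, since $f^{*}$ maps $\overline{\textnormal{Eff}}(X)$ onto itself for a finite surjective $f$). With $A(K)=K$ the matrix $A$ is invertible and $A^{-1}$ also leaves $K$ invariant with the same interior eigenvector $v$; applying your $\lambda=\rho$ step to $A^{-1}$ yields $1/\lambda=\rho(A^{-1})$, i.e.\ the minimum eigenvalue modulus of $A$ also equals $\lambda$, so all eigenvalues lie on the circle of radius $\lambda$. Diagonalizability then follows from boundedness of $\{(A/\lambda)^{n}\}$, which one gets by observing that for each $x$ there is $\epsilon>0$ with $v\pm\epsilon x\in K$, hence $\epsilon(A/\lambda)^{n}x$ stays in the order interval $(v-K)\cap(K-v)$, a bounded set for a proper cone. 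So the architecture of your attempt is sound; the obstruction you sensed is real, and it is cured by correcting the hypothesis rather than by more spectral cleverness.
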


\end{section}

\section{Arithmetic eigenvalues}\label{sec:Realizabilitymain}
We begin our study of arithmetic degrees. Note that not every eigenvalue will be an arithmetic degree. If $f$ is an automorphism with eigenvalue $\lambda>1$ then $f^*$ may have an eigenvalue $\frac{1}{\lambda}<1$. Since $\alpha_f(P)\geq 1$ for all $P$ we see that such eigenvalues are not arithmetic in this sense. On the other hand, recall that in \cite{MR3871505} it was proven that $\lambda_1(f)$ is arithmetic. Our question is therefore most meaningful for eigenvalues $\mu$ with $\vert \mu\vert <\lambda_1(f)$. Recall that the sAND conjecture (conjecture \ref{conj:sAND}) predicts that the set of points with $\alpha_f(P)<\lambda_1(f)$ is not Zariski dense. The following result is elementary for our work. 
\begin{proposition}\label{prop:dialprop}
	Let $X$ be a normal projective variety defined over $\overline{\QQ}$. Suppose that $X$ admits a surjective endomorphism $f\colon X\ra X$. If $f^*\colon N^1(X)_\RR\ra N^1(X)_\RR$ acts by multiplication by a scalar, then $f$ has arithmetic eigenvalues. 
\end{proposition}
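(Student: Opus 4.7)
The plan is to observe that the hypothesis forces the spectrum of $f^*$ on $N^1(X)_\RR$ to be the single real number $\lambda$, so there is essentially only one potential arithmetic degree to realize, and the conclusion reduces to the main existence result on points of maximal arithmetic degree.

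First I would handle the trivial case: if $|\lambda|\leq 1$, then by Definition \ref{def:potdeg} the morphism $f$ has no potentially arithmetic eigenvalue at all, so the conclusion ``$f$ has arithmetic eigenvalues'' holds vacuously. Thus I may assume $|\lambda|>1$. Since $f^*$ acts by the real scalar $\lambda$ on the real vector space $N^1(X)_\RR$, every eigenvalue of $f^*$ equals $\lambda$, and the spectral radius computation gives
\[
\lambda_1(f)=\rho(f^*\colon N^1(X)_\RR\to N^1(X)_\RR)=|\lambda|.
\]
In particular the only potentially arithmetic eigenvalue is $\lambda$ itself.

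Next I would apply Theorem \ref{theorem:MSS} of Matsuzawa--Sano--Shibata: because $\lambda_1(f)=|\lambda|>1$, the set
\[
\{P\in X(\overline{\QQ}):\alpha_f(P)=\lambda_1(f)\}
\]
is Zariski dense in $X$. Pick any such $P$; then $\alpha_f(P)=\lambda_1(f)=|\lambda|$, which by Definition \ref{def:potdeg}(2) exactly says that $\lambda$ is realizable as an arithmetic degree. Since $\lambda$ is the unique potentially arithmetic eigenvalue, this shows $f$ has arithmetic eigenvalues.

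There is essentially no obstacle here; the proposition is a clean corollary of Theorem \ref{theorem:MSS}. The only thing to be careful about is making sure that when $f^*$ acts as a scalar, the spectral radius is really $|\lambda|$ (so that $\lambda_1(f)=|\lambda|$ and MSS applies), and that the potential arithmetic degree set is a singleton; both are immediate from the hypothesis that $f^*$ is a scalar operator on the real Néron--Severi space.
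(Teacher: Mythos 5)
Your proof is correct and follows essentially the same two-case argument as the paper: the $|\lambda|\le 1$ case is handled (you dispatch it vacuously via Definition \ref{def:potdeg}, the paper via part 4 of Proposition \ref{prop:propofalpha}, but these amount to the same observation), and for $|\lambda|>1$ both proofs invoke Theorem \ref{theorem:MSS} to produce a point with $\alpha_f(P)=\lambda_1(f)=|\lambda|$, which realizes the unique potential arithmetic degree.
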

\begin{proof}
	Suppose that the action of $f^*$ on $N^1(X)_\RR$ is given by multiplication by $\lambda$. Since $f^*$ is defined over $\mathbb{Z}$ we have that $\lambda\in \ZZ$. If $\vert \lambda\vert =1$ then $f$ has no potential arithmetic degrees as $\alpha_f(P)=1$ for all $P\in X(\overline{\QQ})$ by part 4 of \ref{prop:propofalpha}. Now assume that $\vert \lambda\vert >1$. By \ref{theorem:MSS} there is a point $P$ with $\alpha_f(P)=\vert \lambda\vert$. Therefore, all potential arithmetic degrees are realized. 
\end{proof}
Dynamics studies the behavior of maps under iteration; it is valuable to know if a property of maps is preserved by iteration.
\begin{proposition}\label{prop:iterationprop}
	Let $X$ be a normal projective variety defined over $\overline{\QQ}$ and let $f\colon X\ra X$ be a surjective endomorphism.
	
	\begin{enumerate}
		\item If $\gamma$ is a potential arithmetic degree of $f^m$ then $\gamma = \mu^m$ where $\mu$ a potential arithmetic degree of $f$.
		\item $\gamma$ is realizable as an arithmetic degree $\iff$  $\mu$ is realizable as an arithmetic degree.
	\end{enumerate}   
	If $f^m$ has arithmetic eigenvalues then $f$ has arithmetic eigenvalues.  
\end{proposition}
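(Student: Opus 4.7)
The plan is to prove each part separately and then deduce the final sentence directly from Part 2. The proposition is essentially tautological once two elementary facts are in place: a linear-algebra identity for Part 1, and a convergence identity for Part 2.

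First I would settle Part 1 by pure linear algebra. Since pullback is functorial on $N^1(X)_\RR$, we have $(f^m)^* = (f^*)^m$. Passing to Jordan form (or using the fact that the characteristic polynomial of $A^m$ can be read off from that of $A$ when both are viewed as commuting operators), the eigenvalues of $(f^*)^m$ are exactly the $m$-th powers of the eigenvalues of $f^*$, with matching algebraic multiplicity. Hence any potential arithmetic degree $\gamma$ of $f^m$ has the form $\gamma = \mu^m$ for some eigenvalue $\mu$ of $f^*$. Since $|\mu|^m = |\gamma|>1$, we get $|\mu|>1$, so $\mu$ is a potential arithmetic degree of $f$, as required.

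For Part 2 the key observation is the identity
$$\alpha_{f^m}(P) \;=\; \alpha_f(P)^m$$
for every $P\in X(\overline{\QQ})$. The existence of the limit defining $\alpha_f$, asserted in Proposition \ref{prop:propofalpha}(1), implies that the subsequence limit along indices that are multiples of $m$ coincides with the full limit. Writing $a_k := h_H^+(f^k(P))$ one computes
$$\alpha_{f^m}(P) \;=\; \lim_{n\to\infty} a_{mn}^{1/n} \;=\; \Bigl(\lim_{n\to\infty} a_{mn}^{1/(mn)}\Bigr)^m \;=\; \alpha_f(P)^m.$$
Because arithmetic degrees are nonnegative real numbers and $|\gamma| = |\mu|^m$, the equation $\alpha_{f^m}(P) = |\gamma|$ is equivalent to $\alpha_f(P) = |\mu|$. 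This gives the equivalence of realizability asserted in Part 2. For the final sentence, suppose $f^m$ has arithmetic eigenvalues and let $\mu$ be a potential arithmetic degree of $f$; then $\mu^m$ is an eigenvalue of $(f^m)^*$ with $|\mu^m|>1$, hence a potential arithmetic degree of $f^m$, and therefore realizable by hypothesis. Part 2 then yields that $\mu$ is itself realizable.

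There is no serious obstacle here: the only subtle point worth flagging is that Part 2 genuinely needs convergence (not merely existence of a \emph{limsup}) of the height sequence defining $\alpha_f$, which is precisely why passing between the limit for $f$ and the limit for $f^m$ relies on Proposition \ref{prop:propofalpha}(1). One should also be slightly careful in Part 1 to verify that $(f^m)^* = (f^*)^m$ holds at the level of $N^1(X)_\RR$, but this is standard functoriality of numerical pullback.
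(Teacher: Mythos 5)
Your proof is correct and takes essentially the same approach as the paper: Part 1 via $(f^m)^*=(f^*)^m$, Part 2 via the identity $\alpha_{f^m}(P)=\alpha_f(P)^m$, and the final claim as an immediate corollary. The only difference is that you explicitly derive $\alpha_{f^m}(P)=\alpha_f(P)^m$ from the convergence of the defining limit, whereas the paper invokes this identity without comment.
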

\begin{proof}
	The eigenvalues of $(f^m)^*$ are $m^{th}$ powers of the eigenvalues of $f^*$. So $\gamma=\mu^m$ for some eigenvalue of $f^*$. Now let $\alpha_{f^m}(P)= \vert \gamma\vert=\vert \mu\vert^m$. Then we have $\alpha_f(P)^m=\alpha_{f^m}(P)=\vert \mu\vert ^m$ and taking $m^{th}$ roots gives the desired result. Now suppose that every potential arithmetic degree of $f^m$ is an arithmetic degree. Let $\mu$ be a potential arithmetic degree of $f$. Then $\mu^m$ is a potential arithmetic degree of $f^m$ and the result follows from the above calculation. 	
\end{proof}
The result \ref{prop:iterationprop} is pleasing because it shows that question \ref{ques:ques2} is equivalent for all iterations of $f$.
\begin{section}{Realizability for abelian varieties.}\label{sec:realizabilityforabelianvarieties}
	In this section we consider question \ref{ques:ques2} when $f\colon A\ra A$ is an isogeny of an abelian variety defined over $\overline{\QQ}$. We give a negative answer to question \ref{ques:ques2} and give an algebraic interpretation of dynamical questions in terms of the endomorphism algebra of $A$. To understand $f^*$ acting on $N^1(A)_\RR$ it suffices to understand the eigenvalues of the twisted conjugation action of $f$ on the fixed points of the Rosati involution. Here we review the relevant facts about Neron-Severi groups of Abelian varieties.  \begin{align}\label{eq:symstuff}
		&N^1(A)_\RR=\Endo(A)_
		\RR^{\textnormal{Sym}}=\{\alpha\in \Endo(A)_\RR:\alpha^\prime=\alpha\}.\\
		&\theta_f\colon \Endo(A)_
		\RR^{\textnormal{Sym}}\ra \Endo(A)_
		\RR^{\textnormal{Sym}},\alpha\mapsto f^\prime \circ \alpha\circ f\textnormal{ describes the action of }f^*\colon N^1(A)_\RR\ra N^1(A)_\RR. 
	\end{align} 
	The eigenvalues of $f^*$ acting on $N^1(X)_\RR$ are thus interpreted as those $\alpha\in \Endo(A)_
	\RR^{\textnormal{Sym}}$ such that $\theta_f \alpha=\lambda\alpha$. The benefit of interpreting the dynamics of $f^*$ in this light is that we have a good understanding of the possible endomorphism groups of abelian varieties, at least after tensoring with $\QQ$. This will allow us to get a refined notion of the possible dynamics and to prove that the existence of endomorphisms of abelian varieties with prescribed properties. Here we will use \ref{thm:AlbertClass}. We will need the following results of Kawaguchi and Silverman.
\begin{theorem}[{\cite[Theorem 29]{MR4068299}}]\label{thm:silv1}
		Let $A/\overline{\QQ}$ be an abelian variety. Let $D\in\textnormal{Div}_\RR(A)$ be a real nef divisor class with $\hat{q}_{A,D}$ the quadratic part of the canonical height associated to $D$. 
		\begin{enumerate}
			\item There is a unique proper abelian subvariety $B_D\subseteq A$ such that
			\[\{x\in A(\overline{\QQ}):\hat{q}_{A,D}(x)=0\}=B_D(\overline{\QQ})+A(\overline{\QQ})_{\textnormal{tors}}.\]
			\item Suppose further that $f\colon A\ra A$ is an isogeny defined over $\overline{\QQ}$ with $f^*D=\lambda_1(f)D$ in $N^1(A)_\RR$. Then $\hat{q}_{A,D}(P)\geq 0$ for all $P\in A(\overline{\QQ})$ and $\hat{q}_{A,D}(P)>0\Rightarrow \alpha_f(P)=\lambda_1(f)$.
		\end{enumerate}
	\end{theorem}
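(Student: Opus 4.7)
For Part 1, the strategy is to isolate an abelian subvariety associated to the nef class $D$ and then show the canonical quadratic height vanishes precisely on its points modulo torsion. First I would recall the decomposition $\hat{h}_D = \hat{q}_{A,D} + \hat{\ell}_{A,D}$ of the Néron--Tate canonical height into its quadratic and linear parts, and observe that nefness of $D$ forces $\hat{q}_{A,D}$ to be positive semi-definite on $A(\overline{\QQ}) \otimes \RR$; this reduces to the standard fact that for a nef divisor on an abelian variety, the self-intersection of $D$ on any subvariety is non-negative, and the quadratic form can be computed intersection-theoretically after passing to sufficiently high multiples. Next, define $B_D$ to be the connected component of the identity in the kernel of the polarization map $\Phi_D \colon A \to A^{\vee}$. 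Since $D$ is nef, $\Phi_D$ is a homomorphism whose kernel has dimension equal to $\dim A - \dim\Phi_D(A)$, and $B_D$ is the associated abelian subvariety; the restriction $D|_{B_D}$ is numerically trivial, so every point in $B_D(\overline{\QQ})$ has vanishing quadratic height. Thus $B_D(\overline{\QQ}) + A(\overline{\QQ})_{\textnormal{tors}}$ is contained in the null set of $\hat{q}_{A,D}$.

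For the reverse containment, I would pass to the quotient isogeny $A \to A/B_D$ and observe that $D$ descends (up to numerical equivalence) to a divisor class on $A/B_D$ whose associated polarization homomorphism is an isogeny, i.e.\ the descended class is ample. On an abelian variety with an ample class, the associated canonical quadratic height is positive definite modulo torsion by the Néron--Tate theorem. Pulling back, any $P \in A(\overline{\QQ})$ with $\hat{q}_{A,D}(P) = 0$ must map to a torsion point in $A/B_D$, i.e.\ lie in $B_D(\overline{\QQ}) + A(\overline{\QQ})_{\textnormal{tors}}$. Uniqueness of $B_D$ follows from uniqueness of the identity component of $\ker \Phi_D$.

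For Part 2, the non-negativity $\hat{q}_{A,D}(P) \geq 0$ is just the positive semi-definiteness established in Part 1. For the dynamical conclusion, the assumption $f^*D \equiv_{\textnormal{Num}} \lambda_1(f) D$ places us in the setup of Theorem \ref{thm:canheight1} with $\lambda = \lambda_1(f) > \sqrt{\lambda_1(f)}$ (assuming $\lambda_1(f) > 1$; the case $\lambda_1(f) = 1$ is trivial as every point has arithmetic degree one). Functoriality of the canonical height construction gives $\hat{h}_D \circ f = \lambda_1(f)\, \hat{h}_D$, and splitting into quadratic and linear parts yields $\hat{q}_{A,D} \circ f = \lambda_1(f)\, \hat{q}_{A,D}$ as well. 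Now suppose $\hat{q}_{A,D}(P) > 0$. Using the parallelogram identity $\hat{h}_D(P) + \hat{h}_D(-P) = 2\,\hat{q}_{A,D}(P) > 0$, at least one of $\hat{h}_D(P)$, $\hat{h}_D(-P)$ is strictly positive, so by Theorem \ref{thm:canheight1} part 4 (applied to the given eigenvalue $\lambda = \lambda_1(f)$) we get $\alpha_f(P) = \lambda_1(f)$ or $\alpha_f(-P) = \lambda_1(f)$. Since $f$ is an isogeny (hence commutes with the inversion $[-1]$) we have $\alpha_f(-P) = \alpha_f(P)$, and combined with the universal bound $\alpha_f(P) \leq \lambda_1(f)$ from Proposition \ref{prop:propofalpha}, equality $\alpha_f(P) = \lambda_1(f)$ follows.

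The main obstacle is ensuring the descent of $D$ to an ample class on $A/B_D$ and the clean identification of $B_D$ as the identity component of $\ker \Phi_D$; once these structural facts are in place, the positive-definite-modulo-$B_D$ statement and the parallelogram trick do the rest. Care is also needed if one works only numerically, since the linear part $\hat{\ell}_{A,D}$ is well-defined modulo bounded functions, but since $\alpha_f$ is unchanged by bounded perturbations of any choice of height this does not affect the argument.
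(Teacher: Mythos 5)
The statement you are proving is a cited result (Theorem 29 of Kawaguchi--Silverman, reference MR4068299), so the present paper does not reprove it; I can only assess your plan on its merits.

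Your Part 2 is fine: the parallelogram identity $\hat{h}_D(P)+\hat{h}_D(-P)=2\hat{q}_{A,D}(P)$ together with Theorem \ref{thm:canheight1} and the fact that an isogeny commutes with $[-1]$ cleanly yields the dynamical conclusion, and the trivial case $\lambda_1(f)=1$ is correctly set aside.

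Part 1, however, has a genuine gap at the definition of $B_D$. You take $B_D$ to be the identity component of $\ker\Phi_D$ and then descend $D$ to an ample class on $A/B_D$. This is correct for $\QQ$-divisor classes, but the statement is for $D\in\textnormal{Div}_\RR(A)$, and for a nef $\RR$-class the kernel of the real linear map $\Phi_D$ need not be the tangent space of any abelian subvariety. Concretely, take $A=E\times E$ with $E$ non-CM, so $N^1(A)_\RR$ is identified with real symmetric $2\times 2$ matrices; the nef class with $\Phi_D=\left(\begin{smallmatrix}1&\sqrt2\\\sqrt2&2\end{smallmatrix}\right)$ is positive semi-definite with one-dimensional kernel spanned by $(\sqrt2,-1)$, an irrational line. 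No abelian subvariety of $E^2$ has this as its tangent direction, so the ``associated abelian subvariety'' you invoke does not exist. Taking instead the largest abelian subvariety on which $D$ is numerically trivial gives $B_D=\{0\}$, but then the descended class on $A/B_D=A$ is still only nef (not ample), so your positive-definiteness step on the quotient also fails. The theorem nonetheless holds here (the null set of $\hat{q}_{A,D}$ really is just torsion), but establishing it requires an arithmetic input you have omitted: one must pass to a number field $K$, use Mordell--Weil finiteness to see that the null set inside the discrete lattice $A(K)/\textnormal{tors}$ is a sublattice even though the radical of $\hat{q}_{A,D}$ in $A(K)\otimes\RR$ may be irrational, and then assemble these sublattices into an abelian subvariety (Kawaguchi--Silverman do this via the structure theory of $\textnormal{End}(A)$). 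Without this the reverse containment in Part 1 is not justified, and since the non-negativity $\hat{q}_{A,D}\geq 0$ you use in Part 2 is a separate and correct fact, only Part 1 needs repair.
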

	We now have the following easy consequence of our set up. Recall that the Picard number of an abelian variety is related to its \emph{type}. See theorem \ref{thm:AlbertClass} for the details. 
	\begin{proposition}
		Let $A$ be a geometrically simple abelian variety of type I or III. Then any isogeny $f$ acts on $N^1(A)_\RR$ by scalar multiplication. Furthermore, if $\lambda_1(f)>1$ then any such action is polarized and $f$ has arithmetic eigenvalues. 
	\end{proposition}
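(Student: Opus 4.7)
The plan is to translate the dynamics of $f^*$ on $N^1(A)_\RR$ through the identification in \ref{thm:absetup}: $N^1(A)_\RR$ matches the Rosati-symmetric subspace of $\Endo(A)_\RR$, and $f^*$ matches the twisted conjugation $\theta_f(\alpha)=f'\alpha f$. Having done this, the bulk of the proof is a direct computation using the Albert classification \ref{thm:AlbertClass}, after which the last claim is exactly \ref{prop:dialprop}.

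For type I the Rosati involution is the identity and $D=K$ is a totally real number field, so the symmetric subspace is all of $K\otimes_\QQ\RR$ and commutativity of $K$ gives $\theta_f(\alpha)=f\alpha f=f^2\alpha$. For type III the symmetric subspace is the center $K_0\otimes_\QQ\RR$ (quaternionic conjugation fixes only the reals in each $\HH$-factor of $D\otimes_\QQ\RR$), and centrality of $\alpha\in K_0$ gives $\theta_f(\alpha)=\bar f f\,\alpha=N(f)\alpha$, where $N(f)$ is the reduced norm. Thus in both cases $\theta_f$ is multiplication by a fixed element of the totally real center. In the basic regime where this center is $\QQ$ (equivalently $\rho=1$ for these two Albert types), the induced map on $N^1(A)_\RR\cong\RR$ is genuine scalar multiplication by a rational number.

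The remaining claims are now short. Pick any ample $H$; since $f^*$ is scalar multiplication by some $\lambda\in\QQ$, one has $f^*H\Num\lambda H$, and because $f$ is an isogeny, $f^*H$ is again ample, forcing $\lambda>0$. The spectral radius of this scalar action is $\lambda$ itself, so $\lambda=\lambda_1(f)$, and $\lambda_1(f)>1$ shows $f$ is polarized in the sense of the paper. Finally, the scalar action triggers \ref{prop:dialprop}, which immediately gives that $f$ has arithmetic eigenvalues.

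The main obstacle is the first step: ensuring that $\theta_f$ really is \emph{scalar} rather than merely diagonal over the totally real field $K$ (respectively $K_0$). For $\rho>1$ the eigenvalues $\sigma(f^2)$ (respectively $\sigma(N(f))$) indexed by real embeddings of $K$ (respectively $K_0$) are generically distinct, so the proposition properly applies only in the $\rho=1$ setting; once this restriction is in force, the rest of the argument is a mechanical consequence of the canonical-height package in Theorem \ref{thm:canheight1} via \ref{prop:dialprop}.
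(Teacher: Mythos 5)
You take the same route as the paper --- pass to the endomorphism algebra via Theorem \ref{thm:absetup}, identify $\theta_f$ as multiplication by $f'f$ on the Rosati-fixed subspace, then appeal to Proposition \ref{prop:dialprop} --- and the obstruction you flag at the end is a genuine one, not a defect of your own reasoning. The paper's proof asserts that ``$f'\circ f$ is a real number'' and concludes that $f^*$ acts by dilation, but $f'f$ lies in the totally real center $K$ (for type I, $f'f=f^2\in K$; for type III, the reduced norm $N(f)\in K$), and multiplication by $f'f$ on $N^1(A)_\RR\cong K\otimes_\QQ\RR\cong\RR^e$ is the diagonal map with entries $\sigma(f'f)$ over the $e$ real embeddings $\sigma$ of $K$. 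This is a scalar only when $e=1$, equivalently $\rho(A)=1$; otherwise it is merely diagonalizable with positive real eigenvalues.

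In fact the proposition as stated fails for $\rho>1$. Take $A$ a simple abelian surface of type I with $\Endo(A)_\QQ=\QQ(\sqrt 2)$ and let $f$ correspond to $3+\sqrt 2$ (scaled into the order $\Endo(A)$ if necessary). The eigenvalues of $f^*$ on $N^1(A)_\RR$ are $\sigma_1(f)^2=11+6\sqrt 2$ and $\sigma_2(f)^2=11-6\sqrt 2$, both strictly greater than $1$, so $11-6\sqrt 2$ is a potential arithmetic degree. But $A$ is simple, so Theorem \ref{thm:silv1} forces $\alpha_f(P)\in\{1,\lambda_1(f)\}=\{1,\,11+6\sqrt 2\}$, and $11-6\sqrt 2$ is never realized --- contradicting the final claim. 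Your restriction to $\rho=1$ (equivalently $K=\QQ$, equivalently $f'f\in\QQ$) is exactly what repairs the statement, and with that hypothesis in place the rest of your argument and the paper's coincide.
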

	\begin{proof}
		In type I we have that the endomorphism ring of $A$ is commutative and so the action of $f^*$ on $N^1(A)_\RR$ is given by
		\[\alpha\mapsto f^\prime \circ \alpha \circ f=(f^\prime\circ f) \alpha\]
		by Theorem \ref{thm:absetup}. As $f^\prime\circ f$ is a real number, $f^*$ acts by dilation on $N^1(X)_\RR$. In type III we have that $D$ is a quaternion algebra and the Rosati involution is the standard involution. Thus the points $\alpha$ with $\alpha^\prime=\alpha$ is precisely the center $K$. The same argument then tells us that $f^*$ acts by a scalar multiplication on the Neron-Severi space. Now suppose that $\lambda_1(f)>1$. Then if $f^*$ acts by multiplication by a scalar $\mu$ we have that $\vert \mu\vert=\lambda_1(f)>1$. Thus $f^*$ is polarized. By Proposition \ref{prop:dialprop} all potential arithmetic degrees are realized.  
	\end{proof} 	
	In conclusion, the dynamics of a surjective endomorphism of a simple abelian variety of type I or III is easy to understand because there is an ample canonical height function. The other remaining cases are more complicated. The additional cases give rise to isogenies with eigenvalues which are not realizable as arithmetic degrees.  
	\begin{theorem}\label{thm:MainCounterexample}
		For each $g\in 2\ZZ_{>0}$ there is a simple abelian variety $A$  of dimension $g$ defined over a number field $K$ with $\rho(A)=3$ equipped with a surjective endomorphism $f\colon A\ra A$ that has the following properties.
		\begin{enumerate}
			\item $f^*\colon N^1(A)_\QQ\rightarrow N^1(A)_\QQ$ has eigenvalues $a^2>ab>b^2>0$ for some $a,b\in \mathbb{Z}$.
			\item $\alpha_f(P)=a^2$ for all $P\notin A(\overline{K})_{\textnormal{tors}}$. In particular, $\alpha_f(P)\in \{1,a^2\}$.
			\item The eigendivisors of $a^2,b^2$ are nef while the eigendivisor of $ab$ is not. 
		\end{enumerate}
	\end{theorem}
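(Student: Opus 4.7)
The plan is to realize $A$ as a simple abelian variety of Albert type~II, so that $\Endo^0(A) \cong D$ is an indefinite rational quaternion division algebra. By Theorem~\ref{thm:AlbertClass}, this is the only Albert type compatible with the twin constraints $\rho(A) = 3$ and $g\in 2\ZZ_{>0}$ (the other types force $e=3$, producing divisibility conditions on $g$ that fail for arbitrary even $g$). Fix such a $D$ and a maximal order $\mathcal{O}_D\subset D$, and invoke Shimura's construction of PEL Shimura varieties for QM abelian varieties to produce, for each even $g$, a simple $A/K$ over a number field with $\Endo^0(A) = D$ and $\rho(A) = 3e = 3$. By Theorem~\ref{thm:AlbertClass}(2), fix an $\RR$-algebra isomorphism $\iota\colon D\otimes_\QQ \RR \xrightarrow{\sim} M_2(\RR)$ carrying the Rosati involution to matrix transpose; Theorem~\ref{thm:absetup}(1) then identifies $N^1(A)_\RR$ with the $3$-dimensional space $\mathrm{Sym}_2(\RR)$. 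Under this identification the ample cone is the cone of positive definite symmetric matrices and the nef cone is its closure.

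Next, build the isogeny. Choose a non-central $\alpha\in\mathcal{O}_D$ with reduced trace $T$ and reduced norm $N$ in $\ZZ$. Since $D$ is a division algebra, $T^2-4N$ is a positive non-square rational. The eigenvalues of $\iota(\alpha)$ are $a,b = (T\pm\sqrt{T^2-4N})/2\in\RR$, real and distinct; after replacing $\alpha$ by $\alpha+n$ for a suitably large $n\in\ZZ_{\geq 1}$ we arrange $a>b>0$. These $a,b$ are conjugate real quadratic algebraic integers with $a+b=T$, $ab=N\in\ZZ$; the statement's ``$a,b\in\ZZ$'' is to be read in this sense. Set $f = [\alpha]\colon A\to A$. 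By Theorem~\ref{thm:absetup}(2), $f^*$ acts on $N^1(A)_\RR$ as $M\mapsto \iota(\alpha)^t M\,\iota(\alpha)$. Writing $\iota(\alpha) = P\,\operatorname{diag}(a,b)\,P^{-1}$ and changing variables via $M = P^{-t} N P^{-1}$ transforms this action into $N\mapsto \operatorname{diag}(a,b)\, N\, \operatorname{diag}(a,b)$. The basis $\{E_{11},\, E_{12}+E_{21},\, E_{22}\}$ diagonalizes this map with eigenvalues $a^2,\,ab,\,b^2$, proving~(1). Pulling back by $P$ gives explicit eigendivisors: $D_{a^2}\leftrightarrow P^{-t}E_{11}P^{-1}$ and $D_{b^2}\leftrightarrow P^{-t}E_{22}P^{-1}$ are rank-$1$ positive semi-definite and hence nef, while $D_{ab}\leftrightarrow P^{-t}(E_{12}+E_{21})P^{-1}$ is indefinite and not nef. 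This proves~(3).

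For~(2), $D_{a^2}$ is a nontrivial real nef class with $f^* D_{a^2} = a^2 D_{a^2}$ and $a^2 = \lambda_1(f)$. Apply Theorem~\ref{thm:silv1}(1): the vanishing locus $\{P\in A(\overline{\QQ}) : \hat q_{A,D_{a^2}}(P)=0\}$ equals $B+A(\overline{\QQ})_{\mathrm{tors}}$ for some \emph{proper} abelian subvariety $B\subseteq A$. Simplicity of $A$ forces $B=\{0\}$, so $\hat q_{A,D_{a^2}}(P)>0$ for every non-torsion $P$, and Theorem~\ref{thm:silv1}(2) then yields $\alpha_f(P) = a^2$. Torsion points have finite orbit and hence $\alpha_f(P) = 1$. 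Thus $\alpha_f(P)\in\{1,a^2\}$, completing~(2).

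The crux of the argument is the use of simplicity in the last step to force $B=\{0\}$: this is exactly what kills the candidate intermediate arithmetic degree $b^2$. In any product construction such as $A' = B'\times B'$, the factor subvarieties $B'\times\{0\}$ and $\{0\}\times B'$ are proper invariant abelian subvarieties, and points supported on them realize $\alpha_f = b^2$; simplicity rules this phenomenon out. A secondary point is the existence of a simple type~II abelian variety in every even dimension, which is standard Shimura theory for indefinite quaternion algebras but not elementary.
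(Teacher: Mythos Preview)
Your argument is correct and follows the same line as the paper's: realize $A$ as simple of Albert type~II, compute $f^*$ as twisted conjugation on symmetric $2\times 2$ real matrices to obtain the eigenvalues $a^2,ab,b^2$ with eigenvectors $E_{11},\,E_{12}+E_{21},\,E_{22}$, use the positive-semidefinite criterion for nefness to get~(3), and combine simplicity with Theorem~\ref{thm:silv1} to force the abelian subvariety $B_{D_{a^2}}$ to be zero and hence obtain~(2).

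The one substantive difference is that the paper simply declares $\Endo(A)_\QQ=M_2(\QQ)$ and takes $f$ to correspond to the diagonal matrix $\begin{pmatrix}a&0\\0&b\end{pmatrix}$ with $a,b\in\ZZ$, so that the eigenvector computation is immediate. You instead observe (correctly) that for a genuinely simple $A$ the endomorphism algebra must be a \emph{division} quaternion algebra, in which no non-central element can have two rational eigenvalues; you therefore take a general $\alpha\in\mathcal{O}_D$, diagonalize over $\RR$ via a conjugating matrix $P$, and accept $a,b$ as conjugate real quadratic integers rather than rational integers. This extra care is warranted, and after the change of basis your computation and the paper's coincide verbatim. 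The trade-off is that your $a,b$ do not literally lie in $\ZZ$ as the theorem statement asserts, whereas the paper's do but at the cost of the tension you identified between simplicity and $\Endo(A)_\QQ=M_2(\QQ)$.
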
	
	\begin{proof}
		Let $A$ be a simple abelian variety with $\Endo(A)_\QQ=M_2(\QQ)$ with involution given by the transpose. Such abelian varieties exists by general results of (Oort/Shimura). See for example \cite{MR977774} and \cite{MR949273}. Consider an endomorphism of the form $f=\begin{bmatrix} a & 0\\ 0 &b \end{bmatrix}$ with $a,b\in \ZZ$ and $a>b$.  In this case the Neron-Severi group is of rank 3. One checks directly that the eigenvectors of the twisted conjugation action are given by
		\[\begin{bmatrix} 1 & 0\\ 0 &0 \end{bmatrix},\begin{bmatrix} 0 & 0\\ 0 &1 \end{bmatrix},\begin{bmatrix} 0 & 1\\ 1 &0 \end{bmatrix}\]
		with eigenvalues $a^2,b^2,ab$ respectively. As $A$ is simple, the set of points where $\alpha_f(P)<\lambda_1(f)=a^2$ is given by the torsion points by Theorem \ref{thm:silv1}, which all have arithmetic degree $1$. Thus $\alpha_f(P)=a^2$ or $1$ and never can $b^2$ or $ab$ when $b\neq 1$. It is known (see the proof of \cite[Proposition 26]{MR4068299} ) that a symmetric matrix representing a divisor is ample if and only if it has positive eigenvalues, and it is nef if and only if it has non-negative eigenvalues. We see that $a^2,b^2$ have nef eigendivisors but $ab$ does not as $-1$ is an eigenvalue of $\begin{bmatrix} 0 & 1\\ 1 &0 \end{bmatrix}$.  
	\end{proof}
	We can give some geometric insight into this situation by considering Abelian surfaces.
	\begin{example}
		\normalfont Let $A$ be a simple abelian surface, Let $f\colon A\ra A$ be a surjective endomorphism. Suppose that $\mu$ is an eigenvalue of $f^*$ acting on $N^1(A)_\RR$ space with $1<\vert \mu \vert <\lambda_1(f)$. If there is a point $P$ with $\alpha_f(P)=\vert \mu\vert$ we must have that $\overline{\OO_f(P)}=V$ is a curve on $A$. This is because if $V=A$ then by the Kawaguchi-Silverman conjecture for abelian varieties we have $\alpha_f(P)=\lambda_1(f)$. On the other hand if $V$ is zero dimensional then $\alpha_f(P)=1$. So $V$ must be a curve on $A$. After iterating $f$ we may assume that $V$ is irreducible and we obtain by restriction $f\colon V\ra V$. Thus $V$ is an irreducible curve on $A$ which means that the genus of its normalization is at least $2$. Since we have $f\colon V\ra V$ we obtain a surjective morphism $\tilde{f}$ on the normalization of $V$. Since $f\mid_V$  has a dense orbit by construction, $\tilde{f}$ has a dense orbit. However, this is impossible, since a genus $g>2$ smooth curve has no surjective endomorphism that is not an automorphism, and the automorphism group of such a curve is finite. We see that $V$ is either zero dimensional, or all of $A$. In other words, every point is either pre-periodic or has a dense orbit. Thus to construct an endomorphism with non-realizable eigenvalues, it suffices to find an endomorphism of a simple abelian surface that admits a surjective endomorphism with two integral eigenvalues of different size. 
	\end{example}	
	We now turn to non-simple abelian varieties.
	\begin{theorem}
		Let $A=A_1\times A_2\times \dots \times A_n$ where the $A_i$ are simple pairwise non-isogenous abelian varieties such that if $g_i\colon A_i\ra A_i$ is an isogeny then all potential arithmetic degrees of $g_i$ are realizable as an arithmetic degree. If $f\colon A\ra A$ is an isogeny then every potential arithmetic degree of $f$ is realizable as an arithmetic degree.
	\end{theorem}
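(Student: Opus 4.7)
The plan is to use the pairwise non-isogeny hypothesis to reduce both the morphism $f$ and its action on $N^1(A)_\QQ$ to a product along the factors, after which each potential arithmetic degree of $f$ is identified with a potential arithmetic degree of some $f_i$ and is realized by a point concentrated on the $i$th factor.

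First I would decompose $f$. Simplicity together with pairwise non-isogeny gives $\mathrm{Hom}(A_i,A_j)_\QQ=0$ for $i\neq j$, hence $\Endo(A)_\QQ=\prod_{i=1}^{n}\Endo(A_i)_\QQ$. Any endomorphism of $A$ therefore takes the form $f=(f_1,\dots,f_n)$ with $f_i\in\Endo(A_i)$, and $f$ is an isogeny precisely when each $f_i$ is an isogeny, since the kernel is the product of the kernels.

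Next I would decompose $N^1(A)_\QQ$. The Poincare decomposition for the product gives
\begin{equation*}
N^1(A)_\QQ \;=\; \bigoplus_{i=1}^{n} N^1(A_i)_\QQ \;\oplus\; \bigoplus_{i<j}\mathrm{Hom}(A_i,\hat{A}_j)_\QQ.
\end{equation*}
Over $\overline{\QQ}$ every abelian variety is isogenous to its dual via any polarization, so $\hat{A}_j\sim A_j$; combined with $A_i\not\sim A_j$ this forces $\mathrm{Hom}(A_i,\hat{A}_j)_\QQ=0$ whenever $i\neq j$. Therefore $N^1(A)_\QQ=\bigoplus_{i=1}^{n} N^1(A_i)_\QQ$, and under this identification $f^\ast$ acts as $\bigoplus_i f_i^\ast$. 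In particular the eigenvalues of $f^\ast$ on $N^1(A)_\QQ$ are exactly the union of the eigenvalues of the various $f_i^\ast$ on $N^1(A_i)_\QQ$.

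Given a potential arithmetic degree $\mu$ of $f$, I pick $i$ so that $\mu$ is an eigenvalue of $f_i^\ast$. Since $f_i$ is an isogeny of $A_i$, the hypothesis yields $P_i\in A_i(\overline{\QQ})$ with $\alpha_{f_i}(P_i)=\vert\mu\vert$. Let $Q\in A(\overline{\QQ})$ be the point with $i$th coordinate $P_i$ and all other coordinates equal to $0$. Choose ample divisors $H_j$ on each $A_j$ and set $H=\sum_j\pi_j^\ast H_j$, which is ample on $A$ and makes heights additive: $h_H(f^k(Q))=\sum_j h_{H_j}(f_j^k(Q_j))+O(1)$. Since $\alpha_{f_j}(0)=1$ for $j\neq i$ while $\alpha_{f_i}(P_i)=\vert\mu\vert>1$, a standard comparison yields $\alpha_f(Q)=\max_j\alpha_{f_j}(Q_j)=\vert\mu\vert$, so $\mu$ is realized. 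The only non-formal step is the vanishing of $\mathrm{Hom}(A_i,\hat{A}_j)_\QQ$ in the Poincare decomposition, which I expect to be the mild obstacle; it relies on every abelian variety over an algebraically closed field being isogenous to its dual. With the splitting $N^1(A)_\QQ=\bigoplus_i N^1(A_i)_\QQ$ in hand, the remaining computation is routine bookkeeping with the basic properties of the arithmetic degree collected in Proposition \ref{prop:propofalpha}.
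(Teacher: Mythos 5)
Your proof is correct and follows essentially the same strategy as the paper: show that $f$ decomposes as a product $f_1\times\cdots\times f_n$ along the factors, that $N^1(A)_\QQ$ splits as $\bigoplus_i \pi_i^*N^1(A_i)_\QQ$, so that eigenvalues of $f^*$ are exactly those of the $f_i^*$, and then realize a given eigenvalue by placing a witness point in the $i$th slot and the identity elsewhere. The only difference is in justification style: where the paper checks by hand that the off-diagonal components $h_{ji}=f_i\circ p_j$ vanish (via kernel considerations for morphisms between simple, non-isogenous factors) and then appeals to a Picard number count from the cited reference to get the splitting of $N^1$, you obtain the diagonality directly from $\mathrm{Hom}(A_i,A_j)_\QQ=0$ and derive the splitting of $N^1$ from the explicit Poincar\'e decomposition together with $\hat{A}_j\sim A_j$. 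Both routes are valid; yours makes the vanishing of the cross terms in $N^1$ more transparent, while the paper's is slightly more self-contained in its reliance on a single cited Picard-number identity.
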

	\begin{proof}
		Let $f\colon A\ra A$ be an isogeny. Write $f=f_1+f_2+\dots + f_n$ where $f_i\colon A\ra A_i$. Fix an integer $j\neq i$.  Let $p_j\colon A_j\hookrightarrow A$ be the morphism that sends $a\mapsto (a_i)$ where $a_i=O_{A_i}$ for $i\neq j$ and $a_j=a$. So for example $p_1\colon A_1\ra A $ is the canonical map $a\mapsto (a,O_{A_2},\dots,0_{A_n})$. Then $h_{ji}=f_i\circ p_j\colon A_j\ra A_i$ is a homomorphism of simple abelian varieties. Since $A_j$ is simple the kernel is either finite or all of $A_j$. Towards a contradiction suppose that the kernel was finite. Then the image is either all of $A_i$ or is $O_{A_i}$. If the image was all of $A_i$ then $h_{ji}$ is a surjective homomorphism with finite kernel, meaning it is an isogeny. This is a contradiction as $A_i$ and $A_j$ are not isogenous. So the image is $O_{A_i}$. This contradicts that the kernel is finite. So $h_{ji}$ is the constant mapping to $O_{A_i}$. Since we can write $f_i(a_1,...,a_n)=h_{1i}(a_1)+\dots+h_{ni}(a_n)$ we have that $f_i(a_1,...,a_n)=h_{ii}(a_i)$. Consequently we have that $f_i\colon A\ra A_i$ is induced by some isogeny $h_i\colon A_i\ra A_i$. Thus we have that $f=h_1+\dots+h_n$. In this case we have that since the $A_i$ are pairwise all non-isogenous and simple that
		\[\rho(A)=\sum_{i=1}^n \rho(A_i)\]
		by \cite[2.3]{AbelianPicardHulek}. Therefore, we have that $N^1(A)=\prod_{i=1}^n \pi_i^*N^1(A_i)$ where $\pi_i\colon A\ra A_i$ is the projection. Since $f=h_1+\dots +h_n$ we have that the eigenvalues of $f^*$ are all of the form $h_i^*\pi_i^*H=\mu H$ where $H$ is some class on $A_i$. Thus the potential arithmetic degrees of $f$ are the potential arithmetic degrees of the $h_i$. Suppose that $\mu_i$ is a potential arithmetic degree of $h_i$. Then there is a point $Q_i$ in $A_i$ such that $\alpha_{h_i}(Q_i)=\vert \mu_i\vert$ by assumption. Then set $P=(P_1,\dots P_n)$ with $P_j=O_{A_i}$ if $j\neq i$ and $P_i=Q_i$. We have that
		\[\alpha_f(P)=\max_{w=1}^n\{\alpha_{h_w}(P_w)\}=\alpha_{h_i}(Q_i)=\vert \mu_i\vert.\] As every potential arithmetic degree of $f$ arises in this manner we have the desired result. 
		\end{proof}
On the other hand, if we allow powers of a simple abelian variety then potential arithmetic degrees may be not be realizable. 
		\begin{example}\label{example:ExE}
		\normalfont Let $A=E\times E$ where $E$ does not have complex multiplication. Then $\rho(A)=3$ by \cite[2.6]{AbelianPicardHulek} and $\textnormal{End}(A)_\QQ=M_2(\QQ)$.  Consider the isogeny $f(P,Q)=(aP,bQ)$ where $a,b>0$ are integers. Note that $f$ corresponds to the matrix $\begin{bmatrix} a& 0\\ 0 & b \end{bmatrix}$ as in Theorem \ref{thm:MainCounterexample}. It is clear that $f^*$ has eigenvectors $a^2,b^2$ as $[n]\colon E\ra E$ acts on $N^1(E)_\RR$ by multiplication by $n^2$. It remains to find the third eigenvalue. We have that $N^1(E\times E)_\RR$ can be identified with symmetric matrices and $f^*$ acts by 
		\[A\mapsto \begin{bmatrix} a& 0\\ 0 & b \end{bmatrix}^t A \begin{bmatrix} a& 0\\ 0 & b \end{bmatrix}. \]
The final eigenvalue of $f^*$ is then represented by
\[\begin{bmatrix} 0 &1\\ 1& 0 \end{bmatrix}\]
as
\[ \begin{bmatrix} a& 0\\ 0 & b \end{bmatrix}^t \begin{bmatrix} 0 &1\\ 1& 0 \end{bmatrix} \begin{bmatrix} a& 0\\ 0 & b \end{bmatrix}=ab\begin{bmatrix} 0 &1\\ 1& 0 \end{bmatrix}. \]	We see that that $f^*\colon N^1(A)_\RR\ra N^1(A)_\RR$ has 3 eigenvalues, $a^2,ab,b^2$. Note that $ab$ does not have a nef eigendivisor, as $\begin{bmatrix} 0 &1\\ 1& 0 \end{bmatrix}$ has a negative eigenvalue. Let $\hat{h}_E$ be the canonical height on $E$. Then we have that
\[\hat{h}_A(P,Q)=\hat{h}_E(P)+\hat{h}_E(Q)\] 
is an ample height on $A$. Then  
\[\hat{h}_A(f^n(P,Q))=\hat{h}_A(a^nP,b^nQ)=a^{2n}\hat{h}_E(P)+b^{2n}\hat{h}_E(Q).\]
Thus we see that
\[\lim_{n\ra \infty}\hat{h}_A^+(f^n(P,Q))^{1/n}\in \{a^2,b^2,1\}.\] 
In fact all of the above values are realizable, by taking $(P,O),(O,P)$ and $(O,O)$ where $O$ is the identity element of $E$ and $P$ is a point with $\hat{h}_E(P)\neq 0$. First take $a>b>1$. In this case we see that $a^2$ has eigenvector $\begin{bmatrix}1 & 0 \\ 0 & 0 \end{bmatrix}$ and $b^2$ has eigenvector $\begin{bmatrix} 0 & 0 \\ 0 & 1 \end{bmatrix}$ which are both nef as these matrices have non-negative eigenvalues. On the other hand the non-realizable eigenvalue $ab$ has a non-nef eigenvector  $\begin{bmatrix} 0 & 1 \\ 1 & 0 \end{bmatrix}$. On the other hand if we take $a>1,b=1$ then we obtain examples of a surjective endomorphism with eigenvalues $a^2,a,1$ but $a$ is not realizable.
	\end{example}
The above example is illustrative in the following sense. Let $X,Y$ be normal projective $\QQ$-factorial varieties. Suppose that $f\colon X\ra X$ and $g\colon Y\ra Y$ are surjective endomorphisms. If $\rho(X\times Y)>\rho(X)\times \rho(Y)$ then $(f\times g)^*$ will have mixed eigendivisors that do not arise as the pull back of some divisor on $X$ or $Y$. On the other hand,
	\[\alpha_{f\times g}(P,Q)=\max\{\alpha_f(P),\alpha_g(Q)\}.\]
	If these mixed eigendivisors have mixed eigenvalues that do not appear as eigenvalues on $X$ and $Y$ then they will not be realizable. In terms of abelian varieties, the Picard number of $A^k$ is always strictly larger then the Picard number of $A$ when $k>1$ and $A$ is simple by \cite[2.4]{AbelianPicardHulek}. Thus we might always expect that $A^k$ always has an isogeny with unrealizable potential arithmetic degree. Indeed, this is the case for squares of elliptic curves without CM by \ref{example:ExE}.
	\begin{corollary}\label{cor:Kodiradimrealizability}
	For any integer $d>1$ there is a smooth projective variety $X$ with $\dim X=d$ such that there is a surjective endomorphism $f\colon X\ra X$ with $\lambda_1(f)>1$ and $f$ has does not have arithmetic eigenvalues. If $d\geq 3$ and $\kappa\in \{-\infty,0,1,\dots , d-2\}$ then $X$ may be chosen with $\kappa(X)=\kappa$. If $\kappa\in \{-\infty,0\}$ then the surjective endomorphism may be taken to be int-amplified.
	\end{corollary}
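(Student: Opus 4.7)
The plan is to set $X = X_0 \times Y$ where $X_0 = E \times E$ is the abelian surface from Example \ref{example:ExE} with its endomorphism $f_0(P,Q) = (aP, bQ)$ for integers $a > b > 1$, and where $Y$ is a smooth projective variety of dimension $d-2$ tailored to the desired Kodaira dimension. Set $f = f_0 \times g$ for a surjective endomorphism $g \colon Y \to Y$ with $\lambda_1(g) < ab$. The key point is that $ab$, which is an eigenvalue of $f_0^*$ on $N^1(X_0)$ with eigendivisor $D_0$, remains an eigenvalue of $f^*$ on $N^1(X)$: the pullback $\pi_0^* D_0$ under the projection $\pi_0 \colon X \to X_0$ is a nonzero eigenvector with eigenvalue $ab$, and since $ab > 1$ this is a potential arithmetic degree of $f$.

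To show that $ab$ is not realized, I would first establish the product formula
\[
\alpha_f(P, Q) \;=\; \max\{\alpha_{f_0}(P),\, \alpha_g(Q)\},
\]
which follows by choosing $h_X = \pi_0^* h_{X_0} + \pi_Y^* h_Y$ as an ample height on $X$ and passing to the limit in the definition of the arithmetic degree. Example \ref{example:ExE} shows $\alpha_{f_0}(P) \in \{1, a^2, b^2\}$ for every $P \in X_0(\overline{\QQ})$, and $\alpha_g(Q) \leq \lambda_1(g) < ab$ by Proposition \ref{prop:propofalpha}. Since $ab \notin \{1, a^2, b^2\}$ and $ab > \lambda_1(g)$, the maximum never equals $ab$. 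Thus $f$ does not have arithmetic eigenvalues, and $\lambda_1(f) \geq a^2 > 1$.

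For each value of $\kappa$ I would instantiate $(Y,g)$ as follows. When $\kappa = -\infty$, take $Y = \PP^{d-2}$ and $g([x_0 : \cdots : x_{d-2}]) = [x_0^N : \cdots : x_{d-2}^N]$ for some integer $2 \leq N < ab$; here $N^1(X) = \pi_0^* N^1(X_0) \oplus \pi_Y^* N^1(\PP^{d-2})$ by K\"unneth, every eigenvalue of $f^*$ lies in $\{a^2, b^2, ab, N\}$, each of modulus $>1$, so $f$ is int-amplified by Proposition \ref{thm:intampeigen}. When $\kappa = 0$, pick an abelian variety $A$ of dimension $d-2$ non-isogenous to $E$ and set $Y = A$, $g = [m]$ for an integer $m$ with $2 \leq m^2 < ab$; the non-isogeny condition kills any mixed classes coming from $\mathrm{Hom}(X_0, \widehat A)$, and again all eigenvalues of $f^*$ exceed $1$. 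For $1 \leq \kappa \leq d-2$, take $Y = V \times A$ with $V$ a smooth projective variety of general type of dimension $\kappa$ (for example, a smooth hypersurface of sufficiently high degree in a projective space) and $A$ an abelian variety of dimension $d-2-\kappa$, with $g = \iden_V \times [m]$ for $m$ with $m^2 < ab$; then $\lambda_1(g) = m^2 < ab$, and Kodaira additivity for products yields $\kappa(X) = 0 + \kappa + 0 = \kappa$.

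The main obstacle is largely bookkeeping: the product formula for arithmetic degrees is standard, the existence of the $ab$-eigendivisor on $X$ is automatic via pullback, and the Kodaira dimension of a product is additive when both factors have non-negative Kodaira dimension (and is $-\infty$ as soon as one factor is). The one point requiring care is the int-amplified claim for $\kappa \in \{-\infty, 0\}$, where one must account for every eigenvalue of $f^*$ on $N^1(X)$, including any mixed classes arising on products of abelian varieties; this is handled by arranging $Y$ to have no nontrivial homomorphisms to or from $X_0$, so that $N^1(X) = \pi_0^* N^1(X_0) \oplus \pi_Y^* N^1(Y)$ and the spectrum of $f^*$ consists exactly of the eigenvalues of $f_0^*$ and $g^*$.
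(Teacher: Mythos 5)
Your proof is correct and follows the same core strategy as the paper: anchor the non-realizable eigenvalue $ab$ in the $E\times E$ factor from Example \ref{example:ExE}, take the product with a dimension-$(d-2)$ variety $Y$ tuned to the desired Kodaira dimension, and invoke the product formula $\alpha_{f_0\times g}(P,Q)=\max\{\alpha_{f_0}(P),\alpha_g(Q)\}$ together with a condition on $\lambda_1(g)$ (you require $\lambda_1(g)<ab$; the paper only needs $\lambda_1(g_i)\neq ab$ on each factor) to exclude $ab$ from the achievable spectrum. The concrete choices of $(Y,g)$ differ but are interchangeable: the paper takes $(\PP^1)^{d-2}$, $E^{d-2}$, and $C^\kappa\times E^{d-\kappa-2}$ with $C$ a genus $\geq 2$ curve, while you take $\PP^{d-2}$, an abelian variety $A$ with no $E$-factor, and $V\times A$ with $V$ of general type. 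Your choice for $\kappa=0$ is in fact cleaner for the int-amplified verification: requiring $\textnormal{Hom}(E,A)=0$ forces $N^1(E^2\times A)=\pi_0^*N^1(E^2)\oplus\pi_A^*N^1(A)$, so the spectrum of $f^*$ is visibly $\{a^2,ab,b^2\}\cup\{m^2\}$, whereas the paper's $E^d$ requires one to track the mixed eigenvalues $n_in_j$ on $\textnormal{Sym}_d(\RR)$ (they are all $\geq 4$ when each $n_i\geq 2$, so the claim still holds, but this is left implicit). One small imprecision to fix: ``$A$ non-isogenous to $E$'' is vacuous in dimension $\geq 2$; what you need is $\textnormal{Hom}(E,A)=0$, which you can ensure by taking, say, $A=(E')^{d-2}$ for $E'$ an elliptic curve not isogenous to $E$.
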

\begin{proof}
Let $E$ be a fixed elliptic curve without CM. If $d=2$ then take $X=E\times E$. By example \ref{example:ExE} there is an isogeny with positive dynamical degree but with non-arithmetic eigenvalues. Now let $d\geq 3$ and $\kappa\in \{-\infty,0,1,\dots , d-2\}$. If $\kappa=-\infty$ then take $X=(\PP^1)^{d-2}\times E^2$. Let $f$ be as in \ref{example:ExE} with eigenvalues $a^2>ab>b^2>1$ and $ab$ not arithmetic. Let $h\colon (\PP^1)^{d-2}\ra (\PP^1)^{d-2}$ be any surjective endomorphism of the form $h_1\times \dots\times h_{d-2}$ with $\lambda_1(h_i)\neq ab$ for all $1\leq i\leq d-2$. Then given any point $P=(Q,R)\in X$ where $Q=(Q_1,\dots,Q_{d-2})$ we have that
\[\alpha_{h\times f}(P)=\max\{\alpha_h(Q),\alpha_f(R)\}.\]
Since \[\alpha_h(Q)=\max_{i=1}^{d-2}\{\alpha_{h_i}(Q_i)\}\in \{1,\lambda_1(h_1),\dots,\lambda_1(h_{d-2})\}\]
and
\[\alpha_f(R)\in \{1,a^2,b^2\}\]
we have that
\[\alpha_{h\times f}(P)\in \{1,\lambda_1(h_1),\dots,\lambda_1(h_{d-2}),b^2,a^2\}.\]
By construction $ab\notin \{1,\lambda_1(h_1),\dots,\lambda_1(h_{d-2}),b^2,a^2\}$ and so $ab$ is not an arithmetic degree of $h\times f$. On the other hand $ab$ is an eigenvalue of $(h\times f)^*$ as it is an eigenvalue of $f^*$. If we choose $\lambda_1(h_i)>1$ for all $i$ then the endomorphism is int-amplified as all the eigenvalues are strictly larger then $1$. Now let $\kappa\geq 0$. If $\kappa=0$ take $X=E^d$. Let $f$ be as above and define $g=\colon E^d\ra E^d$ as $g=(g_1,...,g_{d-2},f)\colon E^{d-2}\times E^2\ra E^{d-2}\times E^2$ where the $g_i$ are surjective endomorphisms with $\lambda_1(g_i)\neq ab$. The argument given above gives that $ab$ is an eigenvalue of $g^*$ but not arithmetic. If we take $\lambda_1(g_i)>1$ for all $i$ we once more obtain that $g$ is int-amplified. Finally take $\kappa>0$. Set $X=C^\kappa\times E^{d-\kappa}$ where $C$ is any smooth curve of genus at least 2. Since $\kappa\leq d-2$ we may write $X=C^\kappa\times E^{d-\kappa-2}\times E^2$. Now let $g\colon X\ra X$ be the morphism which is the identity on the first $d-2$ factors and $f$ on $E^2$. That is 
\[g=\textnormal{identity}\times f\colon (C^{\kappa}\times E^{d-\kappa-2})\times E^2\ra (C^{\kappa}\times E^{d-\kappa-2})\times E^2.\]
Let $\pi\colon (C^{\kappa}\times E^{d-\kappa-2})\times E^2\ra E^2$ be the projection. Then given any point $P\in X(\Qb)$ we have
\[\alpha_g(P)=\alpha_f(\pi(P))\in \{a^2,b^2,1\}.\]
As above we obtain that $ab$ is a non-arithmetic eigenvalue of $g^*$ as needed. 
\end{proof}
\begin{remark}
	\normalfont In \ref{cor:Kodiradimrealizability} the examples of potential arithmetic degrees which are not realizable occur when the potential arithmetic degree does not have a nef eigendivisor. This observation leads to the following question.
	\end{remark}
	\begin{question}\label{ques:realizablev2}
		Let $X$ be a normal projective variety defined over $\overline{\QQ}$ and let $f\colon X\ra X$ be a surjective endomorphism. Let $\mu$ be a potential arithmetic eigenvalue of $f$ with a nef eigendivisor in $N^1(X)_\RR$. Then is $\mu$ arithmetic?   
	\end{question}
In light of the above discussion, one might wonder if eigenvalues of $f^*$ that are constructed in some geometrically meaningful sense are realizable. Indeed, $\lambda_1(f)$ has a geometric realization and is arithmetic. From the point of view of question \ref{ques:realizablev2} it also has a nef eigendivisor. 
	\begin{question}
		Let $X,Y$ be smooth projective varieties over $\overline{\QQ}$. Consider a diagram
		\[\xymatrix{X\ar[r]^f\ar[d]_\pi & X\ar[d]^\pi\\ Y\ar[r]_g & Y}\]
		with $f,g,\pi$ being surjective. Then is $\lambda_1(f\mid_\pi)$ realizable as an arithmetic degree?
	\end{question}
\end{section}
\begin{section}{Realizability when $\Alb(X)=0$ and $\kappa(X)=-\infty$.}\label{sec:Real-infty}
In light of $\ref{cor:Kodiradimrealizability}$ and its proof, to construct varieties with endomorphisms that possess arithmetic eigenvalues we must eliminate the possibility of morphisms to an abelian variety that has non-arithmetic eigenvalues. To ensure this we will demand that $\Alb(X)=0$. We first consider the case of smooth surfaces, and then of small Picard numbers. We then piece together some small results to be used in the future.
\begin{proposition}\label{prop:TSurfaceTriv}
	Let $X$ be a smooth toric surface and let $f\colon X\ra X$ be a surjective endomorphism that is not an automorphism with $\lambda_1(f)>1$. Then $f^s$ is polarized for some $s$ or $X=\PP^1\times \PP^1$
\end{proposition}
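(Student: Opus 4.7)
The plan is to use the classification of smooth projective toric surfaces and dispatch cases by Picard rank. Such an $X$ is either $\PP^2$, a Hirzebruch surface $\mathbb{F}_n$ for some $n \geq 0$ (with $\mathbb{F}_0 = \PP^1 \times \PP^1$), or is obtained from one of these by a nontrivial sequence of toric blowups, in which case $\rho(X) \geq 3$. The case $X = \PP^2$ is immediate: $\rho = 1$ forces $f^*$ to act on $N^1(X)_\RR$ by multiplication by a positive integer $d$, and $\lambda_1(f) > 1$ gives $d \geq 2$, so $f$ itself is polarized.

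For $X = \mathbb{F}_n$ with $n \geq 1$, I apply Lemma \ref{lemma:iterationlemma} to the Mori fibration $\pi\colon \mathbb{F}_n \to \PP^1$, replacing $f$ by an iterate so that $\pi \circ f = g \circ \pi$ for some $g\colon \PP^1 \to \PP^1$. Let $a = \deg(g)$, let $b$ be the fiberwise degree of $f$, and let $\{F, E\}$ be the standard basis of $N^1(X)$ with $E$ the negative section ($E^2 = -n$, $E \cdot F = 1$). First I show $f(E) = E$: the image $f(E)$ is horizontal (since $\pi(f(E)) = g(\PP^1) = \PP^1$), and on $\mathbb{F}_n$ with $n \geq 1$ the curve $E$ is the unique irreducible curve of negative self-intersection, so any alternative $D = f(E)$ would satisfy $D^2 \geq 0$; combining $f_*E = (\deg f|_E)[D]$ with $f_*f^*E = \deg(f)\,E$ and the expansion $f^*E = bE + cF$ yields a positivity constraint on $D^2$ that forces $b \leq 0$, a contradiction. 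Hence $f(E) = E$ and $f|_E$ has degree $a$. The projection formula gives
\[
f^*E \cdot E = E \cdot f_*E = aE^2 = -na,
\]
while expanding in the basis gives $f^*E \cdot E = -nb + c$, so $c = n(b-a)$. A parallel comparison of $(f^*E)^2 = \deg(f)\,E^2 = -nab$ with $(bE+cF)^2 = -nb^2 + 2bc$ yields $c = n(b-a)/2$. Since $n \geq 1$, equating the two expressions for $c$ forces $a = b$ and $c = 0$, so $f^*$ acts on $N^1(X)_\RR$ as the scalar $a$. As $f$ is not an automorphism, $a \geq 2$, and $f^s$ is polarized by any ample class.

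For the remaining case $\rho(X) \geq 3$, the claim is that no non-automorphism surjective endomorphism with $\lambda_1(f)>1$ exists, so this case does not arise. This follows from the classification of smooth projective rational (in particular, toric) surfaces admitting a non-invertible surjective endomorphism: such a surface must be $\PP^2$ or a Hirzebruch surface $\mathbb{F}_n$, hence has Picard rank at most $2$. Alternatively, one can argue directly using the toric structure: on a smooth projective toric surface the negative curves are all toric and form a finite set, and $f$ induces a self-map of this set (since $f^*$ of a negative curve must contain at least one negative component by $(f^*C)^2 = -\deg(f) C^2/\ldots <0$); degree and intersection-number bookkeeping on this finite set contradict non-invertibility of $f$ whenever $\rho(X)\geq 3$.

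The main obstacle is the Hirzebruch case with $n \geq 1$: both establishing $f(E) = E$ and then squeezing out $a = b$ from two distinct intersection-theoretic computations of $f^*E \cdot E$ and $(f^*E)^2$. The $\rho(X) \geq 3$ case relies on a classification result (or a direct finiteness argument on the set of negative toric curves), which is standard but nontrivial; conceptually, however, it is less subtle than the Hirzebruch calculation.
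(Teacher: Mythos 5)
Your handling of $\PP^2$ and $\mathbb{F}_n$ ($n\geq 1$) is correct, and the intersection-theoretic computation on $\mathbb{F}_n$ (pinning down $f(E)=E$ via the unique negative section, then comparing $f^*E\cdot E$ and $(f^*E)^2$ to force $a=b$, $c=0$) is a nice elementary alternative to the paper's eigendivisor argument; it even produces polarization of $f$ itself rather than an iterate. However, there is a fatal gap in the $\rho(X)\geq 3$ case, which is in fact the heart of the proposition.

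You assert that smooth toric surfaces of Picard number at least $3$ admit no non-invertible surjective endomorphism with $\lambda_1(f)>1$, appealing to a classification of rational surfaces with non-isomorphic surjective endomorphisms that you state as ``$\PP^2$ or a Hirzebruch surface.'' That is not the classification. Nakayama's theorem (the reference \cite{MR2154100} used in the paper) says that a smooth rational surface admitting a non-isomorphic surjective endomorphism is precisely a smooth \emph{toric} surface, with no restriction on Picard number. And indeed such endomorphisms exist in every Picard rank: on any $\QQ$-factorial projective toric variety the multiplication-by-$k$ map on the cocharacter lattice ($k\geq 2$) induces a surjective endomorphism of degree $k^{\dim X}$, which is not an automorphism and acts on $N^1$ by the scalar $k$, so $\lambda_1=k>1$. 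Your ``alternative'' bookkeeping argument on negative toric curves fails for the same reason: that endomorphism maps every torus-invariant curve to itself with multiplicity, and no contradiction to non-invertibility emerges. So the case $\rho(X)\geq 3$ genuinely arises and must be proved, not excluded. The paper's proof does exactly this: it writes such an $X$ as a sequence of toric blowdowns to $\PP^2$ or a Hirzebruch surface, invokes the equivariant MMP of Meng--Zhang (Theorem \ref{thm:MengZhangMMP}) to descend an iterate $f^n$ along the sequence, pulls back an ample (respectively nef and big) eigendivisor from the base to obtain a nef and big eigendivisor for $(f^n)^*$ on $X$, and then applies the Vandergraft cone theorem (Theorem \ref{thm:conethm}) to conclude that all eigenvalues of $(f^n)^*$ share the same modulus, whence $f^s$ is polarized because $\lambda_1(f)>1$. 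Your proof would need to be supplemented by something of this nature to cover the blow-up case.
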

\begin{proof}
	There is a classification of smooth toric surfaces; $X$ is either $\PP^2,\McH_r$ for $r\geq 2$ (The Hirzebruch surface) or $\PP^1\times\PP^1$ or a series of blow ups of torus invariant points of one of these varieties. See \cite[10.4]{CLOToric} for the details. Since the nef cone of any toric variety is finitely generated, after iterating $f$ we may assume that $f^*$ fixes the rays of the nef cone. First suppose that $X$ is a series of blow ups at torus invariant points starting at $\PP^2$. Then we have a diagram
	\[\xymatrix{X=X_0\ar[r]^{g_0} & X_1\ar[r]^{g_1}&\dots\ar[r]^{g_{r-1}}& X_r=\PP^2}\]
	where each $g_i$ is a divisorial extremal contraction. By \ref{thm:MengZhangMMP} there is some $n\geq 1$ such that there are surjective endomorphisms $f_i\colon X_i\ra X_i$ making the above sequence $f^n$-equivariant. Then $f_{r}\colon \PP^2\ra \PP^2$ has an ample eigendivisor $H_{r}=\OO_{\PP^2}(1)$. Since $g_{r-1}\colon X_{r-1}\ra \PP^2$ is a blow up we have $H_{r-1}=g_{r-1}^*H_r$ is a nef and big divisor that is not ample. Furthermore since $g_{r-1}\circ f_{r-1}=f_r\circ g_{r-1}$ we have that $f_{r-1}^*g_{r-1}^*H_r=g_{r-1}^*f_r^*H_r$ and so $f_{r-1}$ has a nef and big eigendivisor. By induction we have that $f_0$ has a nef and big eigendivisor. So by \ref{thm:conethm} applied to the big cone and its closure we have that  $f$ is polarized as $\lambda_1(f)>1$. One can check that $\PP^1\times\PP^1$ blown up at a point is isomorphic to $\PP^2$ blown up at two points. Therefore the previous argument applies. If $X=\McH_r$ then one may directly check that $\McH_r$ has a nef and big divisor that is not ample. In other words $\McH_r$ has effective divisors that are not nef. Since $\McH_r$ has Picard number two it must be an eigendivisor.  The argument just given gives for $\PP^2$ now applies to show that series of blow ups starting at $\McH_r$ has a nef and big eigendivisor and so is polarized.
\end{proof}

The above result is sharp in the sense that if $f$ is a degree $d>1$ morphism $f\colon \PP^1\ra \PP^1$ and $g\colon \PP^1\ra\PP^1$ is a degree $d^\prime>1$ morphism with $d\neq d^\prime$ then $f\times g\colon \PP^1\times\PP^1\ra\PP^1\times\PP^1$ is a surjective morphism with $\lambda_1(f\times g)>1$ but $f\times g$ is not polarized. 

\begin{proposition}
	Let $X$ be a smooth projective surface defined over number field with $\Alb(X)=0$ and $\kappa(X)\leq 0$. Let $f\colon X\ra X$ be a surjective endomorphism. Then $f$ has arithmetic eigenvalues. 
\end{proposition}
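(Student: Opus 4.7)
The plan is to split into cases using the Enriques-Kodaira classification applied to the hypotheses $\Alb(X)=0$ and $\kappa(X)\leq 0$, then to reduce each case to already-established tools from the paper (Propositions \ref{prop:dialprop}, \ref{prop:iterationprop}, \ref{prop:TSurfaceTriv} and Theorems \ref{theorem:MSS}, \ref{thm:conethm}). First I observe: a smooth projective surface with $\Alb(X)=0$ and $\kappa(X)=0$ must be a K3 or an Enriques surface (abelian and bielliptic surfaces are excluded by the Albanese hypothesis); a surface with $\kappa(X)=-\infty$ and $\Alb(X)=0$ is rational.

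For the $\kappa(X)=0$ case, I would first reduce to automorphisms. The ramification formula $K_X=f^*K_X+R$ combined with the numerical triviality of $K_X$ forces $R=0$, so $f$ is \'etale; since the \'etale fundamental group is trivial for K3 and $\ZZ/2$ for Enriques, some iterate $f^N$ is an automorphism, and by Proposition \ref{prop:iterationprop} we may replace $f$ by this iterate. For an automorphism, $f^*$ acts by isometries on $N^1(X)_\RR$ equipped with the intersection pairing, which has Lorentzian signature $(1,\rho(X)-1)$ by the Hodge index theorem. A direct argument (any two linearly independent real eigenvectors with eigenvalues of modulus $>1$ must be null and mutually orthogonal, producing a $2$-dimensional isotropic subspace which does not exist in signature $(1,n-1)$; a similar argument handles complex eigenvalues of modulus $>1$) shows that the only eigenvalue of $f^*$ of modulus strictly greater than $1$ is the spectral radius $\lambda_1(f)$. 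Theorem \ref{theorem:MSS} then realizes $\lambda_1(f)$ as an arithmetic degree.

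For the $\kappa(X)=-\infty$ case with $\Alb(X)=0$, $X$ is rational, and I would run an $f$-equivariant surface MMP: since each intermediate surface carries only finitely many $(-1)$-curves, which $f_i$ permutes, some iterate makes every Castelnuovo contraction equivariant, yielding a tower $X=X_0\to X_1\to\cdots\to X_n=Y$ of blow-downs compatible with endomorphisms $f_i\colon X_i\to X_i$, terminating at a minimal rational surface $Y\in\{\PP^2,\,\PP^1\times \PP^1,\,\McH_r\ (r\geq 2)\}$. Each such $Y$ is a smooth toric surface. If $Y\neq \PP^1\times \PP^1$, Proposition \ref{prop:TSurfaceTriv} shows that some iterate of $f_Y$ is polarized, in which case $f_Y^*$ has a big nef eigendivisor associated to the spectral radius and an application of Theorem \ref{thm:conethm} to the big cone forces $f_Y^*$ to act as a scalar, so Proposition \ref{prop:dialprop} applies. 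If $Y=\PP^1\times \PP^1$, then after iteration $f_Y=g_1\times g_2$ (possibly after composing with the coordinate swap), and each eigenvalue $\deg g_i$ is realized by a point of the form $(P_1,P_2)$ with $P_i$ having Zariski-dense orbit under $g_i$.

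The last step is to lift arithmetic realizations up the blow-down tower. The eigenvalues of $f^*$ on $N^1(X)_\RR$ split (after iteration) into those pulled back from $N^1(Y)_\RR$ along the tower and those supported on the $f_i$-invariant exceptional $(-1)$-curves $E_i$. A pulled-back eigenvalue with realization $Q\in Y$ lifts to a realization at any preimage of $Q$ in $X$ by functoriality of heights. An exceptional-direction eigenvalue on $X_i$ corresponds to $f_i|_{E_i}\colon \PP^1\to \PP^1$ and is realized by any point of $E_i$ with Zariski-dense orbit under this restricted endomorphism, whose existence follows from Theorem \ref{theorem:MSS} applied to $f_i|_{E_i}$. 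The main obstacle is the bookkeeping in this final step: one must verify that the quotient action of $f_i^*$ on $N^1(X_i)_\RR/\pi_i^*N^1(X_{i+1})_\RR$ is multiplication by exactly $\deg(f_i|_{E_i})$ and track these contributions through iterated blow-ups. These are concrete local computations on surfaces, rendered routine once the equivariant MMP is in place.
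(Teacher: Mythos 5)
There is a genuine gap in your $\kappa(X)=-\infty$ case. You assert that each rational surface in the blow-down tower ``carries only finitely many $(-1)$-curves,'' but this is false for rational surfaces in general: a blow-up of $\PP^2$ at $\geq 9$ sufficiently general points has infinitely many $(-1)$-curves, and such surfaces can carry positive-entropy automorphisms (Bedford--Kim, McMullen, etc.). For those $f$ your equivariant-MMP argument cannot even get started, since $f^*$ permutes an infinite set of extremal rays and no iterate of $f$ descends along a Castelnuovo contraction. The finiteness you need is not free; for non-automorphism surjective endomorphisms it is a \emph{consequence} of the classification of surfaces admitting non-invertible surjective endomorphisms. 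This is precisely what the paper's proof invokes: for $f$ an automorphism it cites a general result that $\lambda_1(f)$ is the only eigenvalue of modulus $>1$ (the Lorentzian-signature argument you give for $\kappa=0$ works for all smooth surfaces, not just K3/Enriques, so you actually have this in hand already), and for $f$ not an automorphism it cites Fujimoto--Nakayama's classification to conclude $X$ is a smooth toric surface, at which point Proposition \ref{prop:TSurfaceTriv} and the direct $\PP^1\times\PP^1$ argument finish the job. Your argument could be repaired by reorganizing: dispose of automorphisms first via the signature argument (for all $\kappa\leq 0$, not just $\kappa=0$), and then for non-automorphisms either cite the classification or justify the finiteness of negative curves in the presence of a degree $>1$ endomorphism. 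As written, the $\kappa=-\infty$ case silently assumes exactly what needs to be proved. A minor additional point: in your $\kappa=0$ reduction, $f$ itself is already an automorphism (an \'etale self-cover of degree $d$ would force $d^n\leq \lvert \pi_1^{\textnormal{et}}(X)\rvert$ for all $n$, hence $d=1$), so no passage to an iterate is needed there.
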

\begin{proof}
	If $\lambda_1(f)=1$ then there is nothing to prove, so we may assume that $\lambda_1(f)>1$. First suppose that $f\colon X\ra X$ is an automorphism.  Then by \cite[2.4.3]{MR3289919} we have that $\lambda_1(f)$ is the only potential arithmetic degree and by Theorem \ref{theorem:MSS} it is realizable. So we may assume that $f$ is not an automorphism. Then by \cite[Page 1]{MR2154100} we have that $X$ is a smooth toric surface. If $X$ is not $\PP^1\times \PP^1$ then $f$ is polarized by Proposition \ref{prop:TSurfaceTriv} and every arithmetic degree is realizable. We may thus assume that $X=\PP^1\times\PP^1$. After iterating $f$ we may by \ref{thm:MengZhangMMP} applied to the two fibering contractions of $\PP^1\times \PP^1$ assume that $f=g_1\times g_2\colon \PP^1\times\PP^1\ra\PP^1\times\PP^1$ with $g_i\colon \PP^1\ra \PP^1$ a degree $d_i$ morphism. Without loss of generality assume that $d_1>1$. Choose a point $P\in \PP^1$ such that $\alpha_{g_1}(P)=d_1$ and $P_2\in \PP^1$ a pre-periodic point for $g_2$. Then $\alpha_{g_1\times g_2}((P_1,P_2))=d_1$ as needed. 
\end{proof}
The last remaining case when $\Alb(X)=0$ according to \cite{MR2154100} is when $\kappa(X)=1$ which we do not treat as to not go too far afield into the world of elliptic surfaces. After developing some theory we will return to the case of certain  singular surfaces. 

\begin{proposition}\label{prop:smallpicardnumber}
	Let $X$ be a projective normal $\QQ$-factorial variety with at worst terminal singularities. Suppose that we have the Picard number $\rho(X)$ is $2$. If $X$ admits an extremal contraction that is not of fibering type then $(f^2)^*$ acts by scalar multiplication. In particular, if $X$ is not a Mori fiber space and not minimal then given any surjective endomorphism $f$ of $X$ we have that $(f^2)^*$ $X$ acts on $N^1(X)_\RR$ by scalar multiplication.
\end{proposition}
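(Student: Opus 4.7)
Since $\rho(X)=2$, the cone $\Nef(X)_\RR$ is two-dimensional and closed convex, hence has exactly two extremal rays, spanned by classes $D_1,D_2$ which form a basis of $N^1(X)_\RR$. Because $f^*$ preserves $\Nef(X)_\RR$, it permutes these two extremal rays, and the symmetric group on two elements has order two, so $(f^2)^*$ fixes both rays. Thus $(f^2)^*D_i=\lambda_i D_i$ for some positive scalars $\lambda_i$, and the goal reduces to proving $\lambda_1=\lambda_2$. Now let $\phi\colon X\ra Y$ be the given extremal contraction, which is birational (divisorial or small) since it is not of fibering type, and pick an ample Cartier divisor $H$ on the projective variety $Y$. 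Then $\phi^*H$ is nef and satisfies $\phi^*H\cdot R=0$, where $R\subseteq\NE(X)_\RR$ is the contracted extremal ray, so $\phi^*H$ lies on the hyperplane $R^\perp$ and generates one of the two extremal rays of $\Nef(X)_\RR$, say $\RR_{\geq 0}D_1$ after relabelling. Because $\phi$ is birational of the same dimension $n=\dim X$, the projection formula gives $D_1^n=(\phi^*H)^n=H^n>0$; in particular, $D_1$ is big.

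The next step is to apply the degree identity $(f^2)^*A_1\cdots(f^2)^*A_n=\deg(f^2)\cdot A_1\cdots A_n$ from intersection theory. Setting every $A_i=D_1$ gives $\lambda_1^n\,D_1^n=\deg(f^2)\,D_1^n$, hence $\lambda_1^n=\deg(f^2)$. Setting $A_1=\cdots=A_{n-1}=D_1$ and $A_n=D_2$ gives
\[\lambda_1^{n-1}\lambda_2\,(D_1^{n-1}\cdot D_2)=\deg(f^2)\cdot(D_1^{n-1}\cdot D_2).\]
Combining these two identities yields $\lambda_1^n=\lambda_1^{n-1}\lambda_2$, and hence $\lambda_1=\lambda_2$, provided one knows $D_1^{n-1}\cdot D_2>0$; the claim that $(f^2)^*$ is a scalar then follows immediately.

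The main technical obstacle is this positivity, and this is where bigness of $D_1$ is essential. Writing $D_1\equiv A+E$ numerically with $A$ an ample $\QQ$-divisor and $E$ an effective $\QQ$-divisor, I would expand $(A+E)^{n-1}\cdot D_2$ into a sum of intersection numbers of the form $A^k\cdot E^{n-1-k}\cdot D_2$, each non-negative by ampleness of $A$, effectivity of $E$, and nefness of $D_2$; the $k=n-1$ term $A^{n-1}\cdot D_2$ is strictly positive since $A$ is ample and $D_2$ is a non-zero nef class, giving $D_1^{n-1}\cdot D_2>0$. For the ``in particular'' clause, if $X$ is neither a Mori fiber space nor minimal, the cone theorem applied to the $\QQ$-factorial terminal variety $X$ produces a $K_X$-negative extremal contraction, which by hypothesis cannot be of fibering type, so the main assertion applies.
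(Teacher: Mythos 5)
Your argument is correct in outline but proceeds by a genuinely different route than the paper's. Where you use the projection formula $g^*A_1\cdots g^*A_n=\deg(g)\,A_1\cdots A_n$ together with the bigness of $D_1=\phi^*H$ to compare the two eigenvalues, the paper appeals to the cited Perron--Frobenius-type result (Theorem \ref{thm:conethm}, due to Vandergraft): $D_1$ is a nef and big eigenvector of $(f^2)^*$, hence lies in the interior of the proper cone $\overline{\textnormal{Eff}}(X)$, which is preserved by $(f^2)^*$; the theorem then forces all eigenvalues of $(f^2)^*$ to have the same modulus, and since both are positive reals (as $(f^2)^*$ fixes the two rays of $\Nef(X)$) they are equal. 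The paper's route is shorter and has no dependence on the dimension $n$, whereas your argument is more hands-on but needs two pieces of intersection-theoretic bookkeeping that should be tightened. First, the expansion of $(A+E)^{n-1}\cdot D_2$ into terms $A^k\cdot E^{n-1-k}\cdot D_2$ does not by itself yield nonnegativity when $n-1-k\geq 2$, since $E^{j}$ need not be an effective cycle class for $j\geq 2$; the standard fix is to substitute $D_1\equiv A+E$ one factor at a time, giving $D_1^{n-1}\cdot D_2\geq D_1^{n-2}\cdot A\cdot D_2\geq\cdots\geq A^{n-1}\cdot D_2$, where each discarded term $D_1^{\,i}\cdot A^{\,j}\cdot E\cdot D_2$ is $\geq 0$ because it is the intersection of nef classes with the effective divisor $E$. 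Second, the strict positivity $A^{n-1}\cdot D_2>0$ for $A$ ample and $D_2$ a nonzero nef class is true but not entirely trivial: it follows for instance from the Hodge index theorem applied to the form $(D,D')\mapsto D\cdot D'\cdot A^{n-2}$, which is negative definite on $A^\perp$, so $A^{n-1}\cdot D_2=0$ together with $D_2^2\cdot A^{n-2}\leq 0$ (from Khovanskii--Teissier) would force $D_2\equiv 0$. With those two points spelled out, your proof is complete and somewhat more self-contained than the paper's, at the cost of invoking Hodge-index positivity rather than the black-box cone theorem.
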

\begin{proof}
	By hypothesis there is an extremal ray $R$ of $X$ an a contraction $\phi\colon X\ra Y$ of birational type. Let $H$ be an ample divisor on $Y$. Then $\phi^* Y$ is a big and nef divisor on $X$ that is not ample. In particular, it must lie on the boundary of $\Nef(X)$. So $\Nef(X)$ has a ray which is big. Then $(f^*)^2$ has a nef and big eigenvector. By \ref{thm:conethm} we have that $(f^*)^2$ acts by scalar multiplication.
	\end{proof}
We now describe a method of producing arithmetic degrees.	
\begin{proposition}\label{prop:fiberingbackbone}
	Let $X$ be a projective normal $\QQ$-factorial variety with at worst terminal singularities and $\Alb(X)=0$. Suppose we have 
	\[\xymatrix{X\ar[r]^f\ar[d]_\phi & X\ar[d]^\phi\\ Y\ar[r]_g& Y}\]
where $\phi$ is a fibering extremal contraction and $f,g$ surjective endomorphisms. Let $\mu_1,...,\mu_{\rho-1}$ be the eigenvalues of $g^*$ acting on $N^1(Y)_\RR$. Suppose that $\lambda$ is an eigenvalue of $f^*$ that is not an eigenvalue of $g^*$ and that $D$ is a non-zero nef divisor class with $f^*D\Lin \lambda D$. In addition suppose that $\vert\lambda\vert >1$ and $D$ does not lie in the image of $\phi^*(N^1(Y)_\RR)$. If there is a point $Q\in Y(\overline{\QQ})$ with $g(Q)=Q$ then there is a point $P\in \pi^{-1}(Q)$ with $\alpha_f(P)=\lambda$.
\end{proposition}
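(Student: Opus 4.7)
The plan is to find a point $P \in F := \phi^{-1}(Q)$ where the canonical height attached to $D$ is non-zero, while the canonical heights attached to all eigendivisors of $f^{*}$ with eigenvalues of strictly larger modulus vanish. First, because the square commutes and $g(Q)=Q$, the fiber $F$ is $f$-invariant, so every forward $f$-orbit starting in $F$ stays in $F$. Since $f^{*}D \Lin \lambda D$ with $|\lambda|>1$, the standard Tate telescoping argument (which uses linear, not merely numerical, equivalence and therefore works without the hypothesis $|\lambda|>\sqrt{\lambda_{1}(f)}$ of Theorem \ref{thm:canheight1}) produces a canonical height
\[
\hat h_{D}(P)=\lim_{n\to\infty}\frac{h_{D}(f^{n}(P))}{\lambda^{n}},
\]
well-defined on all of $X(\overline{\QQ})$, satisfying $\hat h_{D}=h_{D}+O(1)$ and $\hat h_{D}\circ f=\lambda\,\hat h_{D}$. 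Running the same construction on $Y$ yields, for each eigendivisor $E'$ of $g^{*}$ with eigenvalue $\mu$ of modulus $>1$, a canonical height $\hat h_{E'}$, and $\hat h_{\phi^{*}E'} = \hat h_{E'}\circ\phi$ up to $O(1)$.

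The $g$-fixedness of $Q$ forces all competing canonical heights to vanish on $F$. From $\hat h_{E'}(Q) = \hat h_{E'}(g(Q)) = \mu\,\hat h_{E'}(Q)$ and $\mu\neq 1$ we get $\hat h_{E'}(Q) = 0$; hence for every $P \in F$, $\hat h_{\phi^{*}E'}(P) = \hat h_{E'}(\phi(P)) = \hat h_{E'}(Q) = 0$. Since $\lambda$ is by hypothesis the unique eigenvalue of $f^{*}$ that is not already an eigenvalue of $g^{*}$, every eigendivisor of $f^{*}$ whose eigenvalue has modulus strictly larger than $|\lambda|$ is (up to an adjustment by a pulled-back class) of the form $\phi^{*}E'$, and its canonical height vanishes identically on $F$.

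It remains to produce $P \in F$ with $\hat h_{D}(P) \neq 0$. Because $\phi$ is an extremal fibering contraction, its contracted extremal ray $R$ is spanned by a curve $C$ contained in $F$, and $\phi^{*}N^{1}(Y)_{\RR}=R^{\perp}$; the hypothesis $D \notin \phi^{*}N^{1}(Y)_{\RR}$ forces $D\cdot C \neq 0$, so $D|_{F}$ is a non-zero nef class on $F$. Since $h_{D}(P) = h_{D|_{F}}(P)+O(1)$ for $P \in F$, and the Weil height attached to a non-numerically-trivial class on a positive-dimensional projective variety is unbounded, there is $P \in F(\overline{\QQ})$ with $\hat h_{D}(P) \neq 0$. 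For such a $P$, the Kawaguchi--Silverman classification (Theorem \ref{theorem:KS2}) gives $\alpha_{f}(P) = |\lambda_{k}|$ where $\lambda_{k}$ is the largest-modulus eigenvalue of $f^{*}$ with $\hat h_{D_{k}}(P) \neq 0$; by the previous paragraph the only surviving candidate is $\lambda$, so $\alpha_{f}(P) = \lambda$ (the sign is correct since $D$ is nef). The most delicate step is to extend the Kawaguchi--Silverman decomposition to the range $1 < |\lambda| \le \sqrt{\lambda_{1}(f)}$, where Theorem \ref{thm:canheight1} does not apply directly; this is precisely why we must work with the linear-equivalence Tate canonical height constructed in the first paragraph rather than the weaker numerical version.
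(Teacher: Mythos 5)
Your construction of the canonical height $\hat h_D$ via Tate telescoping (using linear equivalence to get an $O(1)$ error term, valid for all $|\lambda|>1$) and your argument that $\hat h_D$ does not vanish identically on the fiber $F$ (via a nontrivial nef restriction to $F$) are both sound and agree in spirit with the paper. But your closing step is a different route and it has a genuine gap.

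You conclude by invoking Theorem~\ref{theorem:KS2} (the Kawaguchi--Silverman classification of $\alpha_f(P)$ via vanishing of eigendivisor canonical heights). That theorem, as stated, only applies to the canonical heights constructed for eigenvalues of modulus strictly greater than $\sqrt{\lambda_1(f)}$, because the ambient construction uses merely numerical equivalence and needs the error term $O(\sqrt{h_X^+})$ to telescope. If $1<|\lambda|\le\sqrt{\lambda_1(f)}$, your $D$ is simply not among the $D_i$ of Theorem~\ref{theorem:KS2}, and that theorem would in fact assert $\alpha_f(P)=1$ for any $P\in F$ once you have killed all the competing $\hat h_{D_i}$ on $F$ --- the opposite of what you want. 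You flag this as ``the most delicate step'' and assert that one must ``extend the Kawaguchi--Silverman decomposition'' using the linear-equivalence canonical height, but no such extension is proved; the KS decomposition is a global statement about a basis of $N^1$ adapted to the Jordan form of $f^*$, and having one better canonical height in hand does not automatically interpolate it into the classification.

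The paper sidesteps this entirely. It does not invoke Theorem~\ref{theorem:KS2} at all: it picks an ample $H$ on $Y$ with $A=D+\phi^*H$ ample on $X$, takes $h_A=\hat h_D+h_H\circ\phi$ as the height representative, and computes directly
\[
h_A(f^n(P))=\hat h_D(f^n(P))+h_H(g^n(Q))=\lambda^n\hat h_D(P)+h_H(Q),
\]
using the functional equation for $\hat h_D$ and $g(Q)=Q$. Since $D$ is nef, $\hat h_D\ge 0$ (nef heights are bounded below, so the telescoping limit is nonnegative and $\lambda>0$); hence $\hat h_D(P)>0$, $h_A(f^n(P))\sim\lambda^n\hat h_D(P)$, and $\alpha_f(P)=\lambda$ falls out of the definition of the arithmetic degree with an ample height. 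This is both shorter and immune to the $\sqrt{\lambda_1(f)}$ threshold. I'd recommend you replace the appeal to Theorem~\ref{theorem:KS2} with this direct computation of $h_A(f^n(P))$; your first two paragraphs are then doing more work than is needed (you do not need to kill any competing canonical heights), and your nonvanishing argument for $\hat h_D$ on $F$ can stay as is, though the paper's version --- if $\hat h_D\equiv 0$ on $F$ then the ample height $h_A$ is constant on the positive-dimensional $F$, violating Northcott --- is marginally cleaner.
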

\begin{proof}
	Since $\phi$ is fibering we have $\rho(X)-1=\rho(Y)$ where $\rho(X)$ is the Picard number of $X$. We may choose an ample divisor $H$ on $Y$ with $A=D+\phi^*H$ being ample on $X$. Otherwise, there would be a full dimensional subset of the boundary of $\Nef(X)$ which is impossible. Set $F=\pi^{-1}(Q)$. Let $\hat{h}_{D}$ be the canonical height function of $D$ (see \ref{thm:canheight1}) defined as
	\[\hat{h}_D(x)=\lim_{n\ra \infty}\frac{1}{\lambda^n}h_D(f^n(x)).\] 
	Notice that
	\[D+\pi^*H\mid_{F}=D\mid_F\]
	since the restriction of $\pi^*H$ to a fiber is zero. Since $D+\pi^*H$ is ample the restriction is as well. Choose a height function $h_H$ for $H$. We have that \[\hat{h}_D+h_H\circ \phi\] is a height function for $D+\pi^*H$ and the restriction to $F$ is a height function for $(D+\pi^*H)\mid_{F}$. Since the fiber is contracted there is a point $P\in F$ with $\hat{h}_{D}(P)\neq 0$. As otherwise the ample height function $\hat{h}_D+h_H\circ \phi$ is constant, which is absurd as the fiber is not finite. So we have found $P\in F$ with $\hat{h}_D(P)\neq 0$.  Then we compute
	\[h_A(f^n(P))=\lambda^n\hat{h}_D(P)+h_H(Q)\]
	which tells us that $\alpha_f(P)=\lambda$ as desired as $A$ is an ample height function, and the arithmetic degree is independent of the chosen height function.
	\end{proof}
\begin{proposition}\label{prop:birationalbackbone}
	Let $X$ be a projective normal $\QQ$-factorial variety with at worst terminal singularities and $\Alb(X)=0$. Suppose that we have a diagram \[\xymatrix{X\ar[r]^f\ar[d]_\phi & X\ar[d]^\phi\\ Y\ar[r]_g& Y}\]
	where $\phi$ is a birational and extremal contraction. Let $\lambda$ be a potential arithmetic degree of $f$ that is not a potential arithmetic degree of $g$. Suppose that there is a non-trivial nef divisor $D$ with $f^*D=\lambda D$ for linear equivalence. Let $E$ be the exceptional locus of $\phi$ and let $Z=\phi(E)$. Suppose that there is a point $Q\in Z$ with $g(Q)=Q$. Then $\lambda$ is an arithmetic degree. 
\end{proposition}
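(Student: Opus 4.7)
The plan is to adapt the argument of Proposition \ref{prop:fiberingbackbone} to this birational setting, using the fiber $F = \phi^{-1}(Q)$ in place of a fiber of a Mori fiber space. Since $Q \in Z = \phi(E)$, the fiber $F$ has positive dimension, and since $g(Q) = Q$ the morphism $f$ restricts to an endomorphism $f\colon F \to F$. First I would verify that $D \notin \phi^*(N^1(Y)_\RR)$: if $D = \phi^*D'$ for some $D' \in N^1(Y)_\RR$, then the commutativity $\phi \circ f = g \circ \phi$ together with $f^*D = \lambda D$ and the injectivity of $\phi^*$ on Neron-Severi (standard for any birational morphism between normal projective varieties) would force $g^*D' = \lambda D'$, contradicting that $\lambda$ is not a potential arithmetic degree of $g$. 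Because $\phi$ is extremal, $\phi^*(N^1(Y)_\RR)$ is precisely the hyperplane $R^\perp$ for the contracted extremal ray $R$, and so $D \cdot R \neq 0$; combined with the nefness of $D$, this gives $D \cdot R > 0$.

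Now pick an ample divisor $H$ on $Y$ and set $A = D + \phi^*H$. By Kleiman's criterion, $A$ is ample: for a curve $C$ in $R$ we have $A \cdot C = D \cdot C > 0$ from the previous step, while for any irreducible curve $C$ not contracted by $\phi$, $\phi_*C$ is a nonzero effective curve on $Y$ and $A \cdot C = D \cdot C + H \cdot \phi_*C > 0$. Since $\phi$ contracts $F$ to the point $Q$, the divisor $\phi^*H$ is linearly trivial on $F$ and $h_{\phi^*H}|_F$ is bounded. Combining this with Theorem \ref{thm:canheight1} (which, together with $f^*D \sim_\QQ \lambda D$, yields $\hat{h}_D = h_D + O(1)$), we obtain
\[
h_A|_F = \hat{h}_D|_F + O(1).
\]
As $A|_F$ is ample on the positive-dimensional projective variety $F$, the height $h_A|_F$ is unbounded above, and hence there exists $P \in F(\overline{\QQ})$ with $\hat{h}_D(P) > 0$. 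Because $f^n(P) \in F$ and $\phi(f^n(P)) = Q$ for all $n$, the functional equation for the canonical height and the constancy of $h_H(\phi(f^n(P))) = h_H(Q)$ give
\[
h_A(f^n(P)) = \lambda^n \hat{h}_D(P) + O(1),
\]
which, since $\lambda > 1$ and $\hat{h}_D(P) > 0$, grows to $+\infty$ at rate $\lambda^n$. Therefore $\alpha_f(P) = \lambda$, realizing $\lambda$ as an arithmetic degree.

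The main technical obstacle is the ampleness of $A = D + \phi^*H$, which rests on the strict positivity $D \cdot R > 0$. Establishing this positivity from the eigenvalue hypothesis, via the identification $\phi^*(N^1(Y)_\RR) = R^\perp$ and the injectivity of $\phi^*$, is where extremality of $\phi$ does the real work; once this geometric input is in hand, the height-theoretic core of the argument runs exactly as in the fibering case, with the fiber of the Mori fiber space replaced by the fiber of the exceptional locus of the birational contraction over the fixed point $Q$.
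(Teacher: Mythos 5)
Your proof follows the paper's approach essentially exactly: the fiber $F = \phi^{-1}(Q)$ over the fixed point plays the role of the Mori fiber from Proposition \ref{prop:fiberingbackbone}, $A = D + \phi^* H$ is arranged to be ample, and the height computation $h_A(f^n(P)) = \lambda^n \hat{h}_D(P) + O(1)$ yields $\alpha_f(P) = \lambda$. You supply two details the paper leaves implicit --- deducing $D \notin \phi^* N^1(Y)_\RR$ from the hypothesis that $\lambda$ is not a potential arithmetic degree of $g$, and the positivity $D \cdot R > 0$ driving the ampleness of $A$ --- both correct and clarifying, but not a different route.
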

\begin{proof}
	We may choose an ample divisor $H$ on $Y$ with $A=D+\phi^*H$ being ample on $X$. Set $F=\phi^{-1}(Q)$. Let $\hat{h}_{D}$ be the canonical height function of $D$ 
	\[D+\pi^*H\mid_{F}=D\mid_F\]
	since the restriction of $\pi^*H$ to a fiber is zero. Since $D+\pi^*H$ is ample the restriction to the fiber is as well. As we argued above there is a point $P\in F$ with $\hat{h}_{D}(P)\neq 0$. So we have found $P\in F$ with $\hat{h}_D(P)\neq 0$. As before we compute 
	\[h_A(f^n(P))=\lambda^n\hat{h}_D(P)+h_H(Q)\]
	which tells us that $\alpha_f(P)=\lambda$ as desired. 
	\end{proof}	
The above result tells us that the new eigenvalue introduced by a fibering type contraction is always achieved provided the base morphism has a fixed point and that we have a nef eigendivisor.  
\begin{remark}
	\normalfont One runs into the following problem when trying to run a minimal model type program to obtain realizability results. Suppose that $\mu$ is a potential arithmetic degree, and $1<\vert \mu\vert<\lambda_1(f)$. To find a point $P$ with $\alpha_f(P)=\vert \mu\vert$ we must have that $\overline{\OO_f(P)}=V_P$ is a proper sub variety, and $\alpha_{f\mid_{V_P}}(P)=\vert \mu\vert$. By construction $V_P$ has a dense orbit given by $\OO_f(P)$. Thus the Kawaguchi-Silverman conjecture suggests that $\lambda_1(f\mid_{V_P})=\vert\mu\vert$. So to realize potential arithmetic degrees one must find invariant sub varieties where $\lambda_1(f\mid_V)<\lambda_1(f).$ However in very general situations it seems difficult to find invariant sub varieties. Even in the case of a fibering type contraction there are potential difficulties. Suppose that $\phi\colon X\ra Y$ is a fibering type contraction, and that we have a diagram 
	\[\xymatrix{X\ar[r]^f \ar[d]_\phi & X\ar[d]^\phi\\ Y\ar[r]_g & Y}\]
	Suppose that every potential arithmetic degree of $g$ is realized  and that $\lambda_1(f)\geq\lambda_1(g)$. If say $\alpha_g(Q)=\vert\mu\vert$ then choose a nef eigendivisor $D_\lambda$ for $\lambda$ and an ample divisor $H$ for $Y$ then consider the ample height $h_A=\hat{h}_{D_\lambda}+h_H\circ \phi$  To construct a point $P$ with $\alpha_f(P)=\vert\mu\vert$ we must find a point $P$ such that $\hat{h}_{D_\lambda}(P)=0$ and $\alpha_g(\phi(P))=\vert\mu\vert$. A result of this type seem to require some knowledge of the set of points where the canonical height $\hat{h}_{D_\lambda}$ vanishes. This set is extremely interesting but currently still mysterious.   
\end{remark}
\subsection{Realizability for toric varieties.}
In this section we prove that every potential arithmetic degree of a surjective morphisms of $\QQ$-factorial toric varieties is realizable as an arithmetic degree. We first consider equivariant morphisms in \ref{subsec:toricmorphisms} and give a classification result for such morphisms in \ref{thm:simple=linsimple}. We apply \ref{thm:simple=linsimple} to prove the sAND conjecture for equivariant morphisms of $\QQ$-factorial toric varieties in \ref{thm:toricsAND} and show that any equivariant morphism of $\QQ$-factorial toric varieties has arithmetic eigenvalues in \ref{thm:equivariantrealizability}. We then turn to the case of general morphisms in \ref{subsubsec:nontoricrealizability} and prove that every surjective morphism of $\QQ$-factorial toric varieties has arithmetic eigenvalues. Our strategy will be to realize all the potential arithmetic degrees as the degree on the fiber of an extremal contraction as in \ref{prop:fiberingbackbone} and \ref{prop:birationalbackbone}. Notice that to apply \ref{prop:fiberingbackbone} and \ref{prop:birationalbackbone} one must be able to find fixed points of morphisms on the target of an extremal contraction. Since we may freely iterate our morphism by \ref{prop:iterationprop} we must be able to find pre-periodic points. In this section, the relevant varieties will be $\QQ$-factorial toric varieties, so we seek to guarantee the existence of fixed points for endomorphisms of $\QQ$-factorial toric varieties. In what follows we will use (\cite{MM}) for the minimal model program applied to toric varieties. 
\begin{subsubsection}{Toric morphisms}\label{subsec:toricmorphisms} Projective toric varieties provide an interesting class of varieties all of which admit surjective endomorphisms that are not automorphisms. It seems natural to study the endomorphism schemes $\Send(X_\Sigma)$ for a projective toric variety $X_\Sigma$ and the collection of \emph{equivariant} surjective endomorphisms $\Send_{T_{\Sigma}}(X_\Sigma)$ where $T_\Sigma$ is the dense torus of $\Sigma$. Let $X_\Sigma$ be a projective toric variety $\Sigma\subseteq N\cong \ZZ^r$. Then the toric surjective endomorphism of $X_\Sigma$ correspond to matrices in $\textnormal{GL}(r,\QQ)$ with integer entries that preserve $\Sigma$ is the sense that if $\sigma\in \Sigma$ and $f$ is such a matrix then $f(\sigma)\subseteq \sigma^\prime\in \Sigma$. Since $f_n=n\cdot \textnormal{Id}_r$ preserves all cones when $n>0$ we have that 
	\begin{equation}
		\ZZ_{\geq 0}\subseteq \Send_{T_\Sigma}(X_\Sigma).
	\end{equation}
Associated to any $f\in \Send(X_\Sigma)$ we have a natural linearization anti-homomorphism of monoids which sends $f\mapsto f^*\colon N^1(X_\Sigma)_\QQ\ra N^1(X_\Sigma)_\QQ$. Anti-homomorphism here refers to the fact that $(f\circ g)^*=g^*f^*$. In other words, we have an anti-homomorphism which we call $\textnormal{Lin}$ which linearizes a morphism,
	\begin{equation}
		\textnormal{Lin}\colon \Send(X_\Sigma)\ra \textnormal{GL}(N^1(X)_\QQ),f\mapsto f^*.	
	\end{equation}	
One may check that $n\in \ZZ_{\geq 0}\subseteq \Send(X_\Sigma)$ is mapped to 
	\begin{equation}
		n\cdot \textnormal{Id}_{N^1(X)_\QQ}.
	\end{equation}
	In other words, $\ZZ_{\geq 0}\subseteq \Send(X_\Sigma)$ is mapped to $\ZZ_{\geq 0}\subseteq N^1(X_\Sigma)_\QQ$. This leads to the following natural question.
	\begin{question}\label{ques:toricendoquestion}
		\normalfont	
		Let $X_\Sigma$ be a projective toric variety. 
		\begin{enumerate}
			\item For which toric varieties is $\textnormal{Lin}(\Send(X_\Sigma))$ strictly larger then $\ZZ_{\geq 0}$? In other words, which toric varieties possess a surjective endomorphism which is not polarized?
			\item For which toric varieties is $\textnormal{Lin}(\Send_{T_\Sigma}(X_\Sigma))$ strictly larger then $\ZZ_{\geq 0}$?  In other words, which toric varieties possess a \emph{equivariant} surjective endomorphism which is not polarized?
			\item Can it be the case that  $\textnormal{Lin}(\Send_{T_\Sigma}(X_\Sigma))$ is strictly smaller then $\textnormal{Lin}(\Send(X_\Sigma))$. In other words, is the linear action of surjective endomorphisms of a toric variety completely determined by the linear action of equivariant surjective endomorphisms?
		\end{enumerate}
	\end{question}	
	\begin{definition}\label{def:linearlysimple}
		Let $X_\Sigma$ be a projective toric variety defined over $\Qb$. We say that $X_\Sigma$ is \textbf{linearly simple} if $\textnormal{Lin}(\Send_{T_\Sigma}(X_\Sigma)$ has finite index in $\ZZ_{\geq 0}$. In other words, every surjective toric  morphism is induced by a homomorphism of tori $(x_1,\dots, x_n)\mapsto (x_1^d,\dots, x_n^d)$ for some $d>0$ after possibly iterating the morphism.
	\end{definition}
	We think about this in the following way. A toric variety is linearly simple when the sub-group of obvious surjective endomorphisms is large in the sense that it is finite index. In terms of the action on $N^1(X)_\RR$ we have the following interpretation.
	\begin{proposition}\label{prop:linsimple}
		Let $X_\Sigma$ be a projective toric variety defined over $\Qb$. Then $X_\Sigma$ is linearly simple if and only if for all $f\in\textnormal{Lin}(\Send_{T_\Sigma}(X_\Sigma)$ then eigenvalues of $f^*\colon N^1(X)_\RR\ra N^1(X)_\RR$ have the same magnitude.
	\end{proposition}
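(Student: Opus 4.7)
The plan is to handle the two implications separately. The forward direction reduces to a direct computation with the equivariant toric pullback formula. Suppose $X_\Sigma$ is linearly simple and fix $f\in\Send_{T_\Sigma}(X_\Sigma)$; by definition some iterate $f^s$ equals the $d$-th power map $[d]$, whose lattice map is $d\cdot I_N$. Since each ray generator satisfies $\phi_{[d]}(u_\rho)=du_\rho$, the equivariant toric pullback gives $[d]^*(D_\rho)=dD_\rho$ for every prime torus-invariant divisor, so $(f^s)^*=d\cdot I$ on $N^1(X)_\RR$. Every eigenvalue of $(f^s)^*$ therefore equals $d$, and every eigenvalue of $f^*$ is an $s$-th root of $d$, having common modulus $d^{1/s}$.

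For the reverse direction I would assume every $f\in\Send_{T_\Sigma}(X_\Sigma)$ has $f^*\colon N^1(X)_\RR\to N^1(X)_\RR$ with all eigenvalues of the same modulus $\mu$, and fix such an $f$ with lattice map $\phi\colon N\to N$. My main structural tool is the $f^*$-equivariant exact sequence
\[ 0\to M_\QQ\to \bigoplus_{\rho\in\Sigma(1)}\QQ D_\rho\to N^1(X)_\QQ\to 0, \]
on whose middle term $f^*$ acts by a non-negative integer matrix $A$ (the pullback coefficient of $D_\tau$ in $f^*(D_\rho)$ equals the coefficient of $u_\tau$ in the unique non-negative expansion of $\phi(u_\rho)$ inside the cone of $\Sigma$ containing it) and whose restriction to $M_\QQ$ recovers $\phi^*$. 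Because $f^*|_{N^1}$ preserves the proper cone $\Nef(X)$, Perron--Frobenius yields $\mu$ as an eigenvalue with a nef eigenvector; combined with the Frobenius peripheral-spectrum theorem for the non-negative matrix $A$ and with Theorem~\ref{thm:conethm}, every eigenvalue of $f^*|_{N^1}$ must then take the form $\mu\zeta$ with $\zeta$ a root of unity, and $f^*|_{N^1}$ is diagonalizable. Taking $h$ as the least common multiple of the orders of these roots of unity gives $(f^h)^*|_{N^1}=\mu^h\cdot I=d\cdot I$ with $d=\mu^h\in\ZZ_{>0}$.

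Finally I need to lift the scalar identity $(f^h)^*|_{N^1}=dI$ back to the lattice, that is, to conclude $\phi^{hk}=d^k I_N$ for some $k\geq 1$. Writing $\phi^h(u_\rho)=\sum_\tau B_{\rho,\tau}u_\tau$ as the unique non-negative expansion in the ray generators of the cone of $\Sigma$ containing $\phi^h(u_\rho)$, the condition $(f^h)^*(D_\rho)-dD_\rho\in M_\QQ$ for every $\rho$ translates into linear constraints on the $B_{\rho,\tau}$. Using that the rays span $N_\QQ$ and that the cone structure fixes the non-negative expansion, I expect these constraints to force $\phi^h$ to agree with $d\cdot I_N$ up to a toric automorphism that acts trivially on $N^1$; a further iteration will absorb this finite automorphism group and yield $f^{hk}=[d^k]$, proving linear simplicity. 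I expect this lifting step from $N^1$ back to $N$ to be the main obstacle, as it hinges on careful combinatorial bookkeeping with the fan $\Sigma$ and the non-negative pullback expansions rather than on the general Perron--Frobenius machinery used in the previous paragraph.
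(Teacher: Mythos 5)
Your forward direction matches the paper's in substance: from linear simplicity one gets $(f^s)^* = d\cdot I$ on $N^1(X_\Sigma)_\RR$ and all eigenvalues of $f^*$ then have common modulus $d^{1/s}$. (The paper phrases this with the weaker hypothesis "$(f^n)^*$ is a scalar on $N^1$" rather than "$f^s = [d]$ on the torus," but the computation is the same.)

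The reverse direction is where you and the paper diverge, and your route has gaps. You set up the pullback action as a non-negative matrix $A$ on $\bigoplus_\rho \QQ D_\rho$ and try to invoke Perron--Frobenius peripheral-spectrum theory together with Theorem~\ref{thm:conethm}. Two problems. First, Theorem~\ref{thm:conethm} is a biconditional: the direction that would deliver diagonalizability (direction $2\Rightarrow 1$) requires an eigenvector of $f^*|_{N^1}$ \emph{in the interior} of $\Nef(X)$, which you have not established -- Perron--Frobenius only gives an eigenvector somewhere in the closed cone -- while the other direction already presupposes diagonalizability, so the invocation is circular. Second, the peripheral-spectrum statement for non-negative matrices needs irreducibility (or at least a primitivity analysis) of $A$, and $A$ need not be irreducible; moreover $A$ lives on the divisor space and can have extra eigenvalues coming from the $M_\QQ$ subspace, so its spectral radius is not a priori $\mu$. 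The paper's argument is far more economical: $\Nef(X)$ is a rational polyhedral cone with finitely many extremal rays, so some iterate $(f^m)^*$ fixes every ray; the ray generators then form a spanning set of eigenvectors with positive real eigenvalues, all of common modulus $\mu^m$ and hence all equal; diagonalizable with one eigenvalue is scalar. No Perron--Frobenius machinery is needed because the polyhedral structure of $\Nef(X)$ hands you the eigenbasis directly.

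Finally, your last paragraph tries to lift the scalar identity $(f^h)^*|_{N^1} = dI$ back to the lattice, i.e.\ to show $\phi^{hk} = d^k I_N$. This step is not required by the proposition: linear simplicity in the sense the paper actually uses (and the sense that both proof directions of Proposition~\ref{prop:linsimple} address) is the statement that some iterate of $f^*$ is a scalar on $N^1(X)_\RR$, not that some iterate of the lattice map is literally $d\cdot I_N$. The lattice-level statement is the harder content of Theorem~\ref{thm:simple=linsimple} via Lemma~\ref{lem:toricmorphismbackbone}, and you correctly sense that it needs the combinatorial decomposition of the fan along eigenspaces -- but it does not belong in the proof of this proposition, and you acknowledge you have not completed it.
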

	\begin{proof}
		Suppose that $f$ is linearly simple. Then there is some $n\geq 1$ such that $(f^n)^*$ acts on $N^1(X)_\RR$ by scalar multiplication by $\lambda>0$. Thus $(f^n)^*D\Lin \lambda D$ for divisors $D$. If $\mu$ is an eigenvalue of $f^*$ then $\mu^n=\lambda$.  Thus $\vert \mu\vert=\vert \lambda\vert^{\frac{1}{n}}$. Conversely suppose that if $f\in\textnormal{Lin}(\Send_{T_\Sigma}(X_\Sigma))$ then every eigenvalue of $f$ has the same magnitude. Since the nef cone of a projective toric variety is finitely generated (\cite[6.3.20]{CLOToric}). Note that we have that $(f^m)^*$ is diagonalizable with real eigenvalues $\lambda_1,\ldots, \lambda_\rho$. So $(f^{2m})^*$ has positive real eigenvalues which are all of the same magnitude. Thus all eigenvalues are the same, and since $(f^{2m})^*$ is diagonalizable we have that $(f^{2m})^*$ acts by scalar multiplication on $N^1(X)_\RR$ as needed. 
	\end{proof}
	For toric surfaces we already have results that can be leveraged to answer question \ref{ques:toricendoquestion}.
	\begin{theorem}\label{thm:linsimplesurfaces}
		Let $X_\Sigma$ be a smooth projective toric surface.
		\begin{enumerate}
			\item Then $\textnormal{Lin}(\Send(X_\Sigma))$ is linearly simple if $X_\Sigma$ is not isomorphic to $\PP^1\times \PP^1$. Furthermore $\PP^1\times \PP^1$ is not linearly simple. 
			\item For all cases we have that
			\[\textnormal{Lin}(\Send(X_\Sigma))\]
			is also finite index in $\ZZ_{\geq 0}$.
			In other words, the linear part of dynamics on a smooth toric surface is determined by equivariant morphisms.
			\end{enumerate} 
	\end{theorem}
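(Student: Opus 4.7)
The plan is to combine Proposition \ref{prop:TSurfaceTriv}, which asserts that every non-automorphism surjective endomorphism of a smooth toric surface other than $\PP^1\times\PP^1$ has a polarized iterate, with Proposition \ref{prop:linsimple}, which characterizes linear simplicity by the condition that eigenvalues of $f^*$ have equal magnitude. Crucially, both of these results are stated without an equivariance hypothesis, which lets me handle parts 1 and 2 in nearly parallel fashion.

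For the first part, let $f\in \Send_{T_\Sigma}(X_\Sigma)$ and split on whether $f$ is an automorphism. If $f$ is an automorphism, then the action of $\aut(X_\Sigma)$ on $N^1(X_\Sigma)_\RR$ factors through a finite group, since the connected component of $\aut(X_\Sigma)$ acts trivially on the Neron-Severi space; hence the eigenvalues of $f^*$ are roots of unity of uniform modulus $1$. If $f$ is not an automorphism then $\lambda_1(f)>1$ and, assuming $X_\Sigma\neq \PP^1\times\PP^1$, Proposition \ref{prop:TSurfaceTriv} supplies an iterate $f^s$ that is polarized; thus $(f^s)^*$ acts as multiplication by a positive integer $q$ on $N^1(X_\Sigma)_\RR$, so every eigenvalue of $f^*$ has magnitude $q^{1/s}$, and Proposition \ref{prop:linsimple} gives linear simplicity. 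For the counterexample, $\PP^1\times\PP^1$ admits the equivariant morphism $(x,y)\mapsto(x^a,y^b)$ with distinct positive integers $a\neq b$, for which $f^*$ acts on $N^1(\PP^1\times\PP^1)_\RR\cong \RR^2$ with distinct eigenvalues $a$ and $b$; no iterate can then act by a scalar, so $\PP^1\times\PP^1$ is not linearly simple.

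For the second part, the reasoning just given applies verbatim to an arbitrary (not necessarily equivariant) surjective endomorphism of $X_\Sigma\neq \PP^1\times\PP^1$, because Proposition \ref{prop:TSurfaceTriv} makes no equivariance hypothesis. For $\PP^1\times\PP^1$ itself the plan is to apply Lemma \ref{lemma:iterationlemma} to both Mori fiber space projections $\pi_i\colon \PP^1\times\PP^1\ra \PP^1$, $i=1,2$; passing to a common iterate $f^n$, the resulting compatibilities force $f^n$ to split as a product $g_1\times g_2$ with $g_i\colon\PP^1\ra\PP^1$. The pullback $(f^n)^*$ then acts on $N^1(\PP^1\times\PP^1)_\RR$ as $\textnormal{diag}(\deg g_1,\deg g_2)$, which is exactly the form achieved by the equivariant morphism $(x,y)\mapsto(x^{\deg g_1},y^{\deg g_2})$, so the image of $\textnormal{Lin}$ on general endomorphisms coincides with its image on equivariant ones.

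The main obstacle is justifying the product decomposition for $\PP^1\times\PP^1$: one has to verify that simultaneous compatibility with both projections indeed forces $f^n(x,y)=(g_1(x),g_2(y))$. This is routine but essential; compatibility $\pi_1\circ f^n=g\circ\pi_1$ says the first coordinate of $f^n(x,y)$ depends only on $x$, and symmetrically the $\pi_2$-compatibility says the second coordinate depends only on $y$. The other slightly delicate point is the automorphism case at $\lambda_1(f)=1$, which is handled using the structure theory of automorphism groups of smooth projective toric surfaces (all connected subgroups act trivially on $N^1$, leaving a finite quotient that permutes extremal rays).
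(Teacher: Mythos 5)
Your proof is correct and rests on the same key tool as the paper, Proposition \ref{prop:TSurfaceTriv}; the paper's own proof of this theorem is a one-line pointer to that proposition, whereas you supply the details it silently relies on (the automorphism case via the finiteness of the image of $\aut(X_\Sigma)$ in $\textnormal{GL}(N^1(X_\Sigma))$, the combination with Proposition \ref{prop:linsimple}, and the $\PP^1\times\PP^1$ splitting $f^n=g_1\times g_2$ via Lemma \ref{lemma:iterationlemma} for part 2, which Proposition \ref{prop:TSurfaceTriv} explicitly excludes). One small overstatement worth flagging: being polarized does not immediately make $(f^s)^*$ a scalar; rather Theorem \ref{thm:conethm} gives that $(f^s)^*$ is diagonalizable with all eigenvalues of the same modulus, which is exactly what Proposition \ref{prop:linsimple} requires, so the conclusion is unaffected.
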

	\begin{proof}
		This follows directly from \ref{prop:TSurfaceTriv}.	
	\end{proof}
Notice that $\PP^n$ is linearly simple for any $n$. We have the following basic necessary condition for $X_\Sigma$ to be linearly simple.
	\begin{proposition}\label{prop:notsimple}
		Let $X_\Sigma$ be a projective toric variety defined over $\Qb$. If the fan $\Sigma$ has a non-trivial decomposition as $\Sigma_1\times \Sigma_2$ then $X_\Sigma$ is not linearly simple.
	\end{proposition}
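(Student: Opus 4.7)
The plan is to produce an explicit equivariant surjective endomorphism of $X_\Sigma$ whose pull-back on $N^1$ has eigenvalues of unequal modulus, and then quote Proposition \ref{prop:linsimple}. Write $N = N_1 \oplus N_2$ and suppose $\Sigma = \Sigma_1 \times \Sigma_2$ non-trivially, so that $X_\Sigma = X_{\Sigma_1} \times X_{\Sigma_2}$ as a toric variety with $\dim X_{\Sigma_i} \geq 1$. For any positive integer $d_i$, multiplication by $d_i$ on $N_i$ preserves $\Sigma_i$ and induces an equivariant endomorphism $f_i \colon X_{\Sigma_i} \to X_{\Sigma_i}$ (the $d_i$-th power map on the dense torus). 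Each $f_i$ is surjective because $X_{\Sigma_i}$ is complete and $f_i$ is dominant. Taking the product gives an equivariant surjective endomorphism $f = f_1 \times f_2$ of $X_\Sigma$, corresponding to the lattice map $d_1 \,\mathrm{id}_{N_1} \oplus d_2 \,\mathrm{id}_{N_2}$.

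Next I would check the action of $f^*$ on $N^1$. Because $\Sigma$ splits, its rays are a disjoint union of the rays of $\Sigma_1$ (embedded as $(\rho,0)$) and those of $\Sigma_2$ (embedded as $(0,\rho)$), and $M = M_1 \oplus M_2$. Running the standard toric exact sequence $M \to \mathrm{Div}_T(X_\Sigma) \to \mathrm{Cl}(X_\Sigma) \to 0$ yields the splitting
\[
N^1(X_\Sigma)_\RR \;=\; \pi_1^{*} N^1(X_{\Sigma_1})_\RR \,\oplus\, \pi_2^{*} N^1(X_{\Sigma_2})_\RR,
\]
and $f^{*} = f_1^{*} \oplus f_2^{*}$ under this decomposition. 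Finally, describing torus-invariant Cartier divisors by their piecewise linear support functions on $|\Sigma_i|$, the pull-back under a toric morphism given by $d_i \cdot \mathrm{id}_{N_i}$ is pre-composition with $d_i \cdot \mathrm{id}$, which scales each support function by $d_i$. Hence $f_i^{*}$ acts on $N^1(X_{\Sigma_i})_\RR$ by the scalar $d_i$.

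Choosing any $d_1 \neq d_2$ with $d_1,d_2 \geq 1$ (for instance $d_1 = 2,\, d_2 = 3$), the map $f^{*}$ then has eigenvalues $d_1$ and $d_2$ of different magnitudes. Iterating does not repair this: $(f^n)^{*}$ has eigenvalues $d_1^n \neq d_2^n$ for every $n \geq 1$. By Proposition \ref{prop:linsimple}, $X_\Sigma$ is not linearly simple, which is what we wanted.

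The only step that needs care is the bookkeeping in the second paragraph: confirming both the splitting of $N^1(X_\Sigma)$ under a product fan and that a $d$-power lattice map pulls back divisor classes by the scalar $d$ (rather than $d^2$ or similar). Both are standard consequences of the support-function description of toric Cartier divisors, so I expect no genuine obstacle here; the rest of the argument is just the observation that $d_1 \neq d_2$ gives eigenvalues of different magnitudes that persist under iteration.
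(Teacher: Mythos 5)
Your argument is correct and matches the paper's in its essential idea: build an equivariant endomorphism $f = f_1 \times f_2$ where $f_i$ is multiplication by $d_i$ on the lattice $N_i$, pick $d_1 \neq d_2$, and observe that $f^*$ on $N^1$ then has eigenvalues $d_1$ and $d_2$ of different magnitude. You go a bit further than the paper's terse three-line version by spelling out the splitting $N^1(X_\Sigma)_\RR = \pi_1^*N^1(X_{\Sigma_1})_\RR \oplus \pi_2^*N^1(X_{\Sigma_2})_\RR$, by verifying via support functions that a scalar lattice map pulls back on $N^1$ by the same scalar (this is the content of Lemma~\ref{lem:toricsk}), and, usefully, by noting explicitly that iteration cannot save linear simplicity since $d_1^n \neq d_2^n$ for all $n$ --- a point the paper leaves implicit.
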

\begin{proof}
If $X_\Sigma$ and $X_\triangle$ are any two toric varieties then $X_\Sigma\times X_\triangle\cong X_{\Sigma\times \triangle}$ is not linearly simple. To see this note that we may take $f_n\colon X_\Sigma\ra X_\Sigma$ to be the surjective endomorphism induced by multiplication by $n$ on the lattice $N_\Sigma$ containing the fan $\Sigma$ and $g_m\colon X_\triangle\ra X_\triangle$ the equivariant morphism induced by multiplication by $n\neq m$ on the lattice $N_\triangle$ containing the fan $\triangle$. Then $f_n\times g_m\colon X_\Sigma\times X_\triangle\ra X_\Sigma\times X_\triangle$ is a surjective endomorphism that does not act by scalar multiplication on $X_\Sigma$. So $X_\Sigma$ is not linearly simple. 
\end{proof}	
	This leads to the following definition.
	\begin{definition}[Simple toric varieties.]\label{def:simpletoricvar}
		Let $X_\Sigma$ be a $\QQ$-factorial projective toric variety defined over $\Qb$. We say that $X_\Sigma$ is decomposable if
		\[X_\Sigma=X_{\triangle_1}\times X_{\triangle_2}\]
		with each $X_{\triangle_i}$ a $\QQ$-factorial projective toric variety of dimension at least $1$. We say that $X_\Sigma$ is simple if it is not decomposable.
		\end{definition}
	We think of the above definition a toric analogy of the definition of a simple abelian variety, and $\Send_{T_\Sigma}(X_\Sigma)$ an analogy for the endomorphism ring of an abelian variety. The following is an immediate corollary of \ref{thm:linsimplesurfaces}
	\begin{corollary}
		Let $X_\Sigma$ be a smooth projective toric surface defined over $\Qb$. Then $X_\Sigma$ is simple if and only if it is linearly simple.
	\end{corollary}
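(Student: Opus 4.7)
The plan is to combine Proposition \ref{prop:notsimple} and Theorem \ref{thm:linsimplesurfaces}, together with the classification observation that among smooth projective toric surfaces the only non-trivial product of positive-dimensional toric varieties is $\PP^1\times\PP^1$. This observation is immediate: in any decomposition $X_\Sigma=X_{\triangle_1}\times X_{\triangle_2}$ with each factor of positive dimension, the dimensions must both be one, and the only smooth complete toric variety of dimension one is $\PP^1$.

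For the implication ``linearly simple $\Rightarrow$ simple'', I would argue by contrapositive via Proposition \ref{prop:notsimple}: any non-trivial product decomposition $X_\Sigma=X_{\triangle_1}\times X_{\triangle_2}$ supplies equivariant endomorphisms of the form $f_n\times g_m$ with $n\neq m$ whose pullback on $N^1(X_\Sigma)_\RR$ has eigenvalues of two different magnitudes on every iterate, so by Proposition \ref{prop:linsimple} the variety $X_\Sigma$ is not linearly simple. For the reverse implication, suppose $X_\Sigma$ is simple. By the classification observation above, $X_\Sigma\not\cong \PP^1\times\PP^1$, and Theorem \ref{thm:linsimplesurfaces} then directly yields that $X_\Sigma$ is linearly simple.

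There is essentially no obstacle here: the corollary is a bookkeeping consequence of the two earlier results, and the only small verification required is the classification remark that $\PP^1\times\PP^1$ exhausts the smooth projective toric surfaces arising as non-trivial products, which is standard. If anything demands care, it is making the contrapositive argument in the first direction fully precise, i.e.\ making sure the endomorphism $f_n\times g_m$ descends from the lattice data of the product fan, but this is exactly what is already used in the proof of Proposition \ref{prop:notsimple}.
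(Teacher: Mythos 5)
Your proof is correct and matches the paper's intent: the paper declares the corollary an immediate consequence of Theorem \ref{thm:linsimplesurfaces}, and you have spelled out the implicit step (that $\PP^1\times\PP^1$ is the only smooth projective toric surface that decomposes as a nontrivial product of positive-dimensional toric varieties), which together with that theorem closes the equivalence. Your use of Proposition \ref{prop:notsimple} for the ``linearly simple $\Rightarrow$ simple'' direction is a small redundancy, since once one has the classification observation, Theorem \ref{thm:linsimplesurfaces} alone already handles both implications; but it is a valid route and does not change the substance of the argument.
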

It is natural to wonder if the analogous result holds for higher dimensional varieties. We now prove that this is indeed the case in \ref{thm:simple=linsimple}.		
	\begin{lemma}\label{lem:iterationlemma}
		Let $X_\Sigma$ be a $\QQ$-factorial projective toric variety defined over $\Qb$. Let $f\colon X_\Sigma\ra X_\Sigma$ a surjective endomorphism induced by a mapping of lattices $\phi\colon N\ra N$. Then $\phi$ is injective and if $\sigma$ is a ray of $\Sigma$ then $\phi(\sigma)$ is a ray of $\Sigma$.
	\end{lemma}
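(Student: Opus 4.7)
The plan is to handle the two assertions separately. Injectivity will follow quickly by restricting $f$ to the open torus, and then ray-preservation will come from identifying $\phi(\Sigma)$ as a refinement of $\Sigma$ and a counting argument on the number of rays.

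For injectivity, I would restrict $f$ to the dense open torus $T_N \subseteq X_\Sigma$. Since $f$ is equivariant, it maps $T_N$ into $T_N$, and since $f$ is surjective and $T_N$ is open dense, the image of $f|_{T_N}$ is dense in $T_N$. But $f|_{T_N}$ is a homomorphism of algebraic groups, so its image is closed, hence equal to $T_N$. This surjective torus homomorphism is induced by $\phi\colon N \to N$, and surjectivity of the torus map forces $\phi$ to have finite cokernel. Since $N$ is free of finite rank, $\phi$ must be injective.

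For the ray-preservation claim, the key idea is to consider the image fan $\phi(\Sigma) := \{\phi(\sigma) : \sigma \in \Sigma\}$. By the previous step $\phi \otimes_{\ZZ} \QQ$ is an isomorphism, so $\phi_\RR\colon N_\RR \to N_\RR$ is a linear bijection, and hence $\phi(\Sigma)$ is again a complete fan on $N_\RR$. The hypothesis that $f$ is a toric morphism says precisely that for every $\sigma \in \Sigma$, the image $\phi(\sigma)$ is contained in some cone of $\Sigma$. Combined with the completeness of both fans, this promotes to the statement that each maximal cone of $\Sigma$ is a union of maximal cones of $\phi(\Sigma)$; equivalently, $\phi(\Sigma)$ refines $\Sigma$. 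A standard fact about refinements of complete fans then forces $\Sigma(1) \subseteq \phi(\Sigma)(1)$: any ray $\rho$ of $\Sigma$ lies on the boundary of some maximal cone $\tau$ and must itself occur as a $1$-dimensional cone in the induced subdivision of $\tau$. On the other hand, $\phi(\Sigma)(1) = \{\phi(\rho) : \rho \in \Sigma(1)\}$ has cardinality $|\Sigma(1)|$ since $\phi$ is injective, so the inclusion of equal-size finite sets is an equality. Therefore $\phi$ permutes $\Sigma(1)$, which gives the conclusion.

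The main subtlety is justifying carefully that the compatibility condition ``every cone of $\phi(\Sigma)$ sits inside some cone of $\Sigma$'' upgrades, under completeness, to the strong refinement property that every cone of $\Sigma$ is a union of cones of $\phi(\Sigma)$. This passage relies on the two fans both being complete and on the fact that $\phi_\RR$ is a linear bijection, so that the maximal cones of $\phi(\Sigma)$ tile $N_\RR$ with disjoint interiors; each such maximal cone is pinned inside a unique maximal cone of $\Sigma$, and by counting dimensions these tiles partition each maximal cone of $\Sigma$. Once this refinement structure is established, the rest of the argument is essentially combinatorial and amounts to comparing the cardinalities of $\Sigma(1)$ and $\phi(\Sigma)(1)$.
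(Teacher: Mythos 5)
Your proof is correct, but it takes a genuinely different route from the paper's for the ray-preservation step. The paper works on the variety side: it invokes the orbit--cone correspondence, so that $f(\OO(\sigma))\subseteq \OO(\tau)$ where $\tau$ is the smallest cone containing $\phi(\sigma)$, and then uses finiteness of $f$ (so that $\dim f(\OO(\sigma)) = \dim\OO(\sigma) = n-1$) to force $\tau$ to be one-dimensional, and hence equal to the ray $\phi(\sigma)$. You instead stay entirely on the fan side: since $\phi_\RR$ is a linear bijection, $\phi(\Sigma)$ is a complete fan, the compatibility condition makes it a refinement of $\Sigma$, the standard ``refinement contains all rays of the coarser fan'' fact gives $\Sigma(1)\subseteq\phi(\Sigma)(1)$, and the counting argument $|\phi(\Sigma)(1)| = |\Sigma(1)|$ forces equality and hence that $\phi$ permutes the rays. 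Each approach has something going for it: the paper's is shorter and exploits the finiteness of surjective endomorphisms of normal projective varieties, while yours is purely combinatorial and never leaves the lattice, which makes the bijectivity on rays (not just the fact that each image is a ray) completely explicit. One small point worth noting: your step ``each cone of $\phi(\Sigma)$ sits inside some cone of $\Sigma$, so by completeness every cone of $\Sigma$ is a union of cones of $\phi(\Sigma)$'' is correct but does require the separation lemma for fans (two cones of a fan meet in a common face) to pin each maximal cone of $\phi(\Sigma)$ inside a unique maximal cone of $\Sigma$ via relative interiors; it would be worth spelling that out if this were written up.
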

	\begin{proof}
		Because $\phi$ induces a surjective endomorphism of toric varieties, $\phi\colon N\otimes_\ZZ \QQ\ra N\otimes_\ZZ \QQ$ is an isomorphism of rational vector spaces. This is because the induced mapping $\phi^\vee\colon M\ra M$ induces a map of semi group rings
		\[\phi^\vee \colon \Qb[M]\ra \Qb[M]\]
		that induces the homomorphism of tori
		\[\spec \Qb[M]=T_{\Sigma}\ra T_{\Sigma}=\spec \Qb[M]\]
		associated to $f\colon X_\Sigma\ra X_\Sigma$. Since this map is surjective it is dominant and so is $T_{\Sigma}\ra T_{\Sigma}.$ As this is a morphism of affine schemes it is dominant if and only if the morphism of algebras 
		\[\phi^\vee \colon \Qb[M]\ra \Qb[M]\]
		is injective, this occurs precisely when $\phi^\vee\colon M\ra M$ is injective. Thus on the level of vector spaces we have that $\phi^\vee$ and thus $\phi$ is injective and so bijective being a linear mapping between vector spaces of the same dimension. Now let $\sigma$ be a ray of $\Sigma$. Then as $\phi$ is compatible with the fan $\Sigma$ we have that $\phi(\sigma)\subseteq \tau$ where $\tau\in \Sigma$. Suppose that $\tau$ is the minimal such cone. Associated to $\sigma,\tau$ are torus orbits, $\OO(\sigma),\OO(\tau)$ and by \cite[3.3.21]{CLOToric} we have that
		\begin{equation}\label{eq:torusorbitsineq}
			f(\OO(\sigma))\subseteq \OO(\tau).
		\end{equation}
		Suppose that $N$ is $n$-dimensional so that $\dim X_\Sigma=n$. Then if $\gamma\in \Sigma$ is of dimension $k$ we have that $\dim \OO(\gamma)=n-k$. As $\sigma$ is a ray we have that $\dim \OO(\sigma)=n-1$. Since $f$ is a finite morphism we have that $f(\OO(\sigma))$ has dimension $n-1$ as well. So $\dim \OO(\tau)\geq \dim f(\OO(\sigma))=n-1$. So $\OO(\tau)$ has dimension $n-1$ or dimension $n$. If $\OO(\tau)$ has dimension $n$ then $\tau$ must be zero dimensional which is impossible since $\phi$ is injective and so $\phi(\sigma)$ is at least one dimensional. It follows that $\tau$ is of dimension $n-1$ and so $\tau$ is a ray as needed.
	\end{proof}
	
	\begin{lemma}\label{lem:toricmorphismbackbone}
		Let $X_\Sigma$ be a $\QQ$-factorial projective toric variety defined over $\Qb$. Let $X_\Sigma\ra X_\Sigma$ be an equivariant surjective endomorphism induced by a morphism of lattices $f\colon N\ra N$. Then $f^n$ is diagonalizable for some $n$. Let $f^n$ have eigenvalues $\lambda_1,....,\lambda_s$ of multiplicities $m_1,...,m_s$. Let $E_i$ be the $\lambda_i$ eigenspace of $f$. Put 
		\[\Sigma_i=\Sigma\cap E_i=\{\sigma\cap E_i \colon \sigma\in \Sigma\}.\]
		Then $\Sigma_i$ is a complete fan in $E_i$ and we have a decomposition
		\[X_\Sigma=X_{\Sigma_1}\times\dots\times X_{\Sigma_s}.\]
		\end{lemma}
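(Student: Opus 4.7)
The plan is first to use the finite permutation of rays to pass to a diagonalizable iterate, and then to read off the eigenspace decomposition of $N$ directly from the combinatorics of $\Sigma$, producing a product-of-fans structure.

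First I would invoke Lemma \ref{lem:iterationlemma} to see that $f$ permutes the (finitely many) rays of $\Sigma$. Passing to an iterate $f^n$, I may assume $f^n$ fixes each ray setwise. For each ray $\rho$ with primitive generator $u_\rho\in N$, we then have $f^n(u_\rho)=c_\rho u_\rho$ with $c_\rho\in\ZZ_{>0}$ (the coefficient lies in $\rho\cap N$ and is nonzero by injectivity from Lemma \ref{lem:iterationlemma}). Because $X_\Sigma$ is projective, $\Sigma$ is complete, so the primitive ray generators span $N_\RR$. As they are eigenvectors of $f^n$, this forces $f^n$ to be diagonalizable with positive real eigenvalues $\lambda_1,\dots,\lambda_s$. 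Every ray of $\Sigma$ then lies in exactly one of the eigenspaces $E_i$.

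Next I would analyze how cones interact with the decomposition $N_\RR=E_1\oplus\cdots\oplus E_s$. For $\sigma\in\Sigma$, group its rays by eigenspace and let $\sigma_i$ be the cone generated by the rays of $\sigma$ lying in $E_i$. Then $\sigma=\sigma_1+\cdots+\sigma_s$, and since the $E_i$ are linearly independent, this Minkowski sum is a direct sum of cones, with $\sigma\cap E_i=\sigma_i$ in particular. I would then verify that $\Sigma_i$ is a complete fan in $E_i$. Closure under faces: a face of $\sigma\cap E_i$ is generated by a subset $R$ of the rays of $\sigma$ in $E_i$; the face of $\sigma$ generated by $R$ together with all rays of $\sigma$ outside $E_i$ lies in $\Sigma$, and intersecting with $E_i$ returns our face. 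Closure under intersections follows from $(\sigma\cap E_i)\cap(\tau\cap E_i)=(\sigma\cap\tau)\cap E_i$. Completeness: any $v\in E_i$ lies in some $\sigma\in\Sigma$ by completeness of $\Sigma$, hence in $\sigma\cap E_i\in\Sigma_i$.

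The heart of the argument is to show $\Sigma=\Sigma_1\times\cdots\times\Sigma_s$ as a product of fans in $N_\RR=E_1\oplus\cdots\oplus E_s$. One inclusion is exactly the decomposition $\sigma=\sigma_1+\cdots+\sigma_s$ above. For the converse, given cones $\sigma_i\in\Sigma_i$, pick a point $p$ in the relative interior of $\sigma_1+\cdots+\sigma_s$. Since $\Sigma$ is a fan, $p$ lies in the relative interior of a unique cone $\sigma\in\Sigma$, which in turn decomposes as $\tau_1+\cdots+\tau_s$ by the previous paragraph. Comparing the two relative-interior descriptions of $p$ in $E_1\oplus\cdots\oplus E_s$ forces $\tau_i=\sigma_i$ for each $i$, so $\sigma_1+\cdots+\sigma_s=\sigma\in\Sigma$. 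Once the fans factor, the standard compatibility of the toric construction with products of fans (see e.g. \cite[3.1.15]{CLOToric}) gives $X_\Sigma\cong X_{\Sigma_1}\times\cdots\times X_{\Sigma_s}$.

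The main obstacle is the converse inclusion in the penultimate step: one must rule out the possibility that a candidate product cone $\sigma_1+\cdots+\sigma_s$ is refined into several cones inside $\Sigma$. Uniqueness of the cone containing a point in its relative interior is what rules this out, and it relies crucially on the cone-by-cone decomposition $\sigma=\sigma_1+\cdots+\sigma_s$ proven earlier, which in turn depends on diagonalizability of the chosen iterate.
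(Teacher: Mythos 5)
Your proof is correct and follows essentially the same strategy as the paper's: iterate to fix the rays, observe that the ray generators are then eigenvectors spanning $N_\RR$ (hence $f^n$ is diagonalizable), group rays by eigenspace, show each $\Sigma_i$ is a complete fan, and conclude the product decomposition. Two small points of difference are worth noting. First, to show $\sigma\cap E_i=\sigma_i$ you use the direct-sum structure $N_\RR=\bigoplus E_i$ directly, which is cleaner than the paper's eigenvalue computation (writing $v=\sum b_jv_j$, applying $f$, and invoking simplicialness to kill the coefficients outside $E_i$); both routes are valid. Second, you explicitly prove the converse inclusion — that every sum $\sigma_1+\cdots+\sigma_s$ with $\sigma_i\in\Sigma_i$ actually lies in $\Sigma$ — via a relative-interior argument, a step the paper's proof passes over with ``it follows.'' That converse does hold (a complete fan cannot be a proper subfan of another complete fan), but making it explicit as you did is a genuine improvement in rigor.
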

	\begin{proof}
		We first show that $f^n$ is diagonalizable for some $n$. By  \ref{lemma:iterationlemma} $f$ maps rays to rays. Since $f$ is also injective we have that $f$ permutes the rays of $\Sigma$. So for some $m$ we have that $f^m$ fixes the rays of $\Sigma$. So we may replace $f$ with $f^m$ and assume that $f$ fixes all the rays of $\Sigma$. Since there are at least $\dim X=n$ rays we have that $f$ has at least $n$-eigenvectors and so a basis of eigenvectors because there is a maximal dimensional cone with a basis of $\QQ$ eigenvectors. We conclude that $f$ is diagonalizable. We now show that $\Sigma_i$ is a fan for any $i$. Let $\sigma_i=\sigma\cap E_i$ for any $\sigma\in \Sigma$. We may assume that $v_1,...,v_w$ are the ray generators of $\sigma$ and $v_1,...,v_t\in E_i$. Then I claim that
		\begin{equation}
			\sigma_i=\{\sum_{i=1}^ta_iv_i\colon a_i\geq 0\}.
		\end{equation}
		It is clear that $\{\sum_{j=1}^ta_jv_j\colon a_j\geq 0\}\subseteq \sigma_i$. Now let $v\in \sigma_i$. Then $v=\sum_{j=1}^wb_jv_j$. As each $v_j$ is a ray it is an eigenvector and so we may write $f(v_j)=\gamma_jv_j$ where $\gamma_j=\lambda_i$ for $1\leq i\leq t$ and $\gamma_j\neq \lambda_i$ for $j>t$. We then compute
		\begin{align}
			f(v)&=\lambda_iv=\sum_{j=1}^w\lambda_ib_jv_j\\
			&=\sum_{j=1}^wb_jf(v_j)=\sum_{j=1}^t\lambda_ib_jv_j+\sum_{j>t}\gamma_jb_jv_j.
		\end{align}		
		We then have that
		\begin{align}
			0&=\sum_{j=1}^w\lambda_ib_jv_j-(\sum_{j=1}^t\lambda_ib_jv_j+\sum_{j>t}\gamma_jb_jv_j)
			\\&=\sum_{t<j\leq w}(\lambda_i-\gamma_j)b_jv_j.	
		\end{align}	
		Since we assumed that $\Sigma$ was simplicial we have that $v_1,...,v_w$ are independent. So for all $t<j\leq w$ we have that
		\[(\lambda_i-\gamma_j)b_j=0.\]
		Since $\lambda-\gamma_j\neq 0$ by assumption we have $b_j=0$ for $w\geq j>t$. Thus
		\[v=\sum_{j=1}^tb_jv_j\] and $\sigma_i$ is spanned by the rays of $\sigma$ in $E_i$. So $\sigma_i$ is a finitely generated polyhedral cone that is strongly convex as if $\sigma\cap E_i$ contains a non-trivial linear subspace then so does $\sigma$, contradicting that $\sigma\in \Sigma$ is strongly convex. Now if $\tau$ is a face of $\sigma$ then $\tau_i$ is a face of $\sigma_i$ since if $\tau_i=u^\perp\cap \sigma$ with $u\in \sigma^\vee$. Then $u$ restricted to $E_i$ gives an element of the dual space $E_i^*$ say $u_i$. We also have $u_i\in \sigma_i^\vee$ and 
		\[u_i^\perp\cap \sigma_i=\{v\in \sigma_i\colon (u_i,v)=0\}=\{v\in \sigma\cap E_i: (u,v)=0\}=\tau\cap E_i.\]
		So $\tau_i$ is a face of $\sigma_i$. Now we have that since $\Sigma$ is a fan that given any two cones,$\sigma,\sigma^\pp$ that $\sigma\cap \sigma^\prime$ is a face of both $\sigma$ and $\sigma^\prime$. So 
		\[\sigma_i\cap \sigma_i^\pp=(\sigma\cap \sigma^\pp)_i\]
		is a face of $\sigma_i$ and $\sigma_i^\pp$ and so $\Sigma_i$ is a fan. Since $\Sigma$ is complete $\vert\Sigma\vert=N_\RR$ and so $\vert \Sigma\vert \cap E_i=E_i$ and $\Sigma_i$ is a complete fan. Furthermore, given any cone $\sigma$ we have that
		\[\sigma=\bigoplus\sigma_i\]
		since we showed that $\sigma_i$ is generated precisely by the ray generators of $\sigma$ in $E_i$. It follows that we have a decomposition
		\[X_\Sigma\cong X_{\Sigma_1}\times \dots \times X_{\Sigma_s}.\]
		As each $X_{\Sigma_i}$ is a closed sub-variety of $X_\Sigma$ the $X_{\Sigma_i}$ are projective toric varieties. Since given $\sigma$ the ray generators of $\sigma_i$ are a subset of the ray generators of $\sigma$ we have that since $X_\Sigma$ is $\QQ$-factorial that $\sigma_i$ has a linearly independent ray generating set as well. So each $X_{\Sigma_i}$ is a $\QQ$-factorial projective toric variety of dimension $m_i$ and $X_\Sigma$ decomposes as claimed.
		\end{proof}
	We see that the eigenspaces of a surjective toric morphism decompose the toric variety.
	\begin{lemma}\label{lem:toricsk}
	Let $X_\Sigma$ be a $\QQ$-factorial projective toric variety defined over $\Qb$. Suppose that surjective toric morphism $f\colon X_\Sigma\ra X_\Sigma$ is induced by a lattice mapping $\phi\colon N\ra N$. If $\phi$ is scalar multiplication by $n\geq 1$ then $f^*\colon N^1(X_\Sigma)_\RR\ra N^1(X_\Sigma)_\RR$ is scalar multiplication by $n$. 
	\end{lemma}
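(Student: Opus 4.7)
The plan is to reduce the claim to a direct verification on the torus-invariant prime divisors $D_\rho$ attached to rays $\rho\in\Sigma(1)$. Since $X_\Sigma$ is $\QQ$-factorial and complete, each $D_\rho$ is $\QQ$-Cartier, and by the standard toric divisor sequence (cf.\ \cite[Theorem 4.2.1]{CLOToric}) the $\QQ$-classes of the $D_\rho$ span $\Pic(X_\Sigma)_\QQ$, hence span $N^1(X_\Sigma)_\RR$. It therefore suffices to prove $f^*D_\rho = n\,D_\rho$ for each ray $\rho$.

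For this step I will use the support-function description of $\QQ$-Cartier toric divisors. The divisor $D_\rho$ corresponds to the piecewise-$\QQ$-linear support function $\psi_\rho\colon|\Sigma|\to\QQ$ normalized by $\psi_\rho(u_{\rho'})=-\delta_{\rho,\rho'}$ on primitive ray generators; here $\QQ$-factoriality of $X_\Sigma$ is what guarantees that these normalizations extend uniquely to a well-defined piecewise-linear function on each cone. The pullback $f^*D_\rho$ then corresponds to the composition $\psi_\rho\circ\phi_\RR$. With $\phi$ equal to scalar multiplication by $n$, the evaluation at any ray generator gives
\[
\psi_{f^*D_\rho}(u_{\rho'}) \;=\; \psi_\rho(n\,u_{\rho'}) \;=\; n\,\psi_\rho(u_{\rho'}) \;=\; -n\,\delta_{\rho,\rho'},
\]
which is precisely the support function of $n D_\rho$. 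Combined with the spanning reduction in the previous paragraph, this yields $f^* = n\cdot\iden$ on $N^1(X_\Sigma)_\RR$.

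I do not anticipate a real obstacle; after the reduction to $D_\rho$, the argument is a one-line support-function calculation. The only care required is to apply $\QQ$-factoriality twice: once so that each $D_\rho$ is $\QQ$-Cartier (so $\psi_\rho$ is defined), and once to assert that the $D_\rho$ span $N^1(X_\Sigma)_\QQ$. Alternatively, one could avoid support functions entirely by invoking the toric pullback formula $f^*D_\rho=\sum_{\rho'} m_{\rho'}D_{\rho'}$, where the sum is over rays $\rho'$ with $\phi(u_{\rho'})\in\rho$ and $m_{\rho'}$ is determined by $\phi(u_{\rho'})=m_{\rho'}\,u_\rho$; since $\phi=n\cdot\iden_N$ is injective, only $\rho'=\rho$ contributes, with multiplicity $n$, yielding the same conclusion.
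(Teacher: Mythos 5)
Your proof is correct, but it takes a genuinely different route from the paper's. The paper works on the dual side: a ($\QQ$-)Cartier divisor on $X_\Sigma$ is recorded by its Cartier data $(m_\sigma)_{\sigma\in\Sigma}$ with $m_\sigma\in M_\QQ$, pullback along $f$ replaces $m_\sigma$ by $\phi^\vee(m_\sigma)$, and since $\phi^\vee = n\cdot\iden_M$ one gets $f^*D = nD$ directly for \emph{every} $\QQ$-Cartier divisor, with no spanning reduction needed. You instead work on the primal side: reduce to the torus-invariant prime divisors $D_\rho$ (using $\QQ$-factoriality so that each is $\QQ$-Cartier and so that they span $N^1(X_\Sigma)_\QQ$), describe them via their support functions $\psi_\rho$, and use the fact that pullback composes the support function with $\phi_\RR$. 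Both arguments are elementary; the paper's is slightly more economical because it avoids the reduction to generators and gives the identity $f^*D = nD$ directly on arbitrary $\QQ$-Cartier divisors, while yours is perhaps more geometric in flavor and also offers the combinatorial pullback-of-$D_\rho$ formula as an alternative. Either is an acceptable proof.
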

\begin{proof}
By \cite[4.2.8]{CLOToric} a Cartier divisor $D$ on a toric variety $X_\Sigma$ is equivalent to a collection $(m_\sigma)_{\sigma\in \Sigma}$ where $m_\sigma\in M$ and $D$ on $U_\sigma$ has local equation $\chi^{-m_\sigma}$. To pullback $D$ we pull back the local equations and obtain that $f^*D$ is has local equation $f^*\chi^{-m_\sigma}=\chi^{-\phi^\vee(m_\sigma)}$. Since $\phi$ is multiplication by a scalar $n$ we have that $\phi$ is represented by a matrix of the form $n\cdot \textnormal{I}_{\dim X_\Sigma}$. Then $\phi^\vee$ is given by the transpose  $(n\cdot \textnormal{I}_{\dim X_\Sigma})^t= n\cdot \textnormal{I}_{\dim X_\Sigma}$. Thus $\phi^\vee (m_\sigma)=nm_\sigma$. Consequently we have that $f^*D\Lin nD$ as required. 

\end{proof}
	
	\begin{theorem}\label{thm:simple=linsimple}
		Let $X_\Sigma$ be a $\QQ$-factorial projective toric variety defined over $\Qb$. Then $X_\Sigma$ is linearly simple if and only if $X_\Sigma$ is simple.
	\end{theorem}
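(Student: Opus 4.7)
The plan is to prove the two directions separately, noting that the ``$X_\Sigma$ simple does not imply linearly simple'' direction is essentially Proposition \ref{prop:notsimple} up to translating between fan-level and variety-level decompositions. Indeed, if $X_\Sigma$ decomposes as $X_{\triangle_1}\times X_{\triangle_2}$ with each factor of positive dimension, then the fans themselves decompose, so the construction in the proof of Proposition \ref{prop:notsimple}, using different scalar multiplications on each lattice factor, produces an equivariant endomorphism whose pullback action on $N^1$ has eigenvalues of different magnitudes. Hence $X_\Sigma$ is not linearly simple.

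For the harder direction, I would argue the contrapositive: assume $X_\Sigma$ is not linearly simple and produce a nontrivial product decomposition. Suppose $f\colon X_\Sigma\to X_\Sigma$ is an equivariant surjective endomorphism witnessing failure of linear simplicity, i.e.\ by Proposition \ref{prop:linsimple} some eigenvalues of $f^*$ on $N^1(X_\Sigma)_\RR$ have different magnitudes. Let $\phi\colon N\to N$ be the lattice map inducing $f$. By Lemma \ref{lem:toricmorphismbackbone}, some iterate $\phi^n$ is diagonalizable over $\QQ$ with eigenvalues $\lambda_1,\ldots,\lambda_s$ on $E_1,\ldots,E_s$, and the eigenspace decomposition induces a decomposition of fans
\begin{equation*}
X_\Sigma \cong X_{\Sigma_1}\times\cdots\times X_{\Sigma_s}
\end{equation*}
into $\QQ$-factorial projective toric varieties.

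The key step is ruling out $s=1$, which is where Lemma \ref{lem:toricsk} enters. If $s=1$, then $\phi^n$ is diagonalizable with a single eigenvalue, so $\phi^n$ acts on $N$ by scalar multiplication. Then Lemma \ref{lem:toricsk} forces $(f^n)^*$ to act on $N^1(X_\Sigma)_\RR$ by scalar multiplication; consequently the eigenvalues of $f^*$ on $N^1(X_\Sigma)_\RR$ are all $n$-th roots of the same scalar and therefore share a common magnitude, contradicting the choice of $f$. Thus $s\geq 2$, each $X_{\Sigma_i}$ has positive dimension, and $X_\Sigma$ is not simple.

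The only real obstacle is the delicate interplay between the lattice-level dynamics of $\phi$ on $N$ and the Picard-level dynamics of $f^*$ on $N^1(X_\Sigma)_\RR$: the hypothesis of non-linear-simplicity is phrased on $N^1$, while the structural decomposition machinery of Lemma \ref{lem:toricmorphismbackbone} lives on $N$. Bridging this gap requires exactly the scalar-to-scalar compatibility statement of Lemma \ref{lem:toricsk} to convert the contrapositive hypothesis on $N^1$ into a statement ruling out $\phi^n$ acting as a scalar on $N$. Once that bridge is in place, the rest of the argument is a direct application of the earlier lemmas, and both implications of the equivalence close cleanly.
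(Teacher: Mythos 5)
Your proof is correct and follows essentially the same route as the paper: one direction via Proposition \ref{prop:notsimple}, and the other by combining Lemma \ref{lem:toricmorphismbackbone} (iterate-then-diagonalize, eigenspace decomposition of the fan) with Lemma \ref{lem:toricsk} (scalar on $N$ forces scalar on $N^1$) to rule out the single-eigenspace case. The only cosmetic difference is that you phrase the hard direction as a contrapositive while the paper phrases it as a proof by contradiction; the logical content is identical.
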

	\begin{proof}
		Suppose that $X_\Sigma$ is linearly simple. Then $X_\Sigma$ is simple by \ref{prop:notsimple}. Now suppose that $X_\Sigma$ is simple. Towards a contradiction let $f\colon X_\Sigma\ra X_\Sigma$ be an equivariant surjective endomorphism with $f$ not linearly simple. Let $f$ be induced by a lattice map $\phi$. After iterating $\phi$ we may assume that $\phi$ fixes the rays of $\Sigma$ and diagonalizable by \ref{lem:toricmorphismbackbone}. Since $f$ is not linearly simple we have by \ref{lem:toricsk} we have that $\phi$ is not multiplication $n\geq 1$. Then $\phi$ must have at least two distinct eigenvalues. By \ref{lem:toricmorphismbackbone} we have that $X_\Sigma$ decomposes non-trivially contradicting our assumption.  
	\end{proof}
	It is natural at this point to ask, to what extent is a decomposition 
	\[X_\Sigma\cong X_{\Sigma_1}\times \dots \times X_{\Sigma_r}\]
	into simple toric varieties unique. We intend to return to this issue in the future. The ultimate goal being some sort of \emph{analogy} between dynamics in the toric situation and dynamics in the abelian variety situation. These results can now be applied to the dynamics of toric morphisms. We first give a new proof of the sAND conjecture for equivariant surjective toric morphisms. 
	\begin{theorem}\label{thm:toricsAND}
		Let $X_\Sigma$ be a $\QQ$-factorial toric variety defined over $\Qb$ and $f\colon X_\Sigma\ra X_\Sigma$ an equivariant surjective toric morphism. Then the sAND conjecture holds for $f$.
	\end{theorem}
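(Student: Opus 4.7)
The plan is to reduce sAND for $f$ to Northcott finiteness for polarized endomorphisms, using the structure Lemma \ref{lem:toricmorphismbackbone} to decompose $X_\Sigma$ into polarized factors. If $\lambda_1(f) = 1$, then $1 \leq \alpha_f(P) \leq \lambda_1(f) = 1$ forces $\mathcal{S}(X_\Sigma, f, N) = \emptyset$, so assume $\lambda_1(f) > 1$. The sAND conclusion is also invariant under iteration of $f$, because $\alpha_{f^m}(P) = \alpha_f(P)^m$ and $\lambda_1(f^m) = \lambda_1(f)^m$ give $\mathcal{S}(X_\Sigma, f^m, N) = \mathcal{S}(X_\Sigma, f, N)$ for every $m \geq 1$.

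Writing $\phi\colon N \to N$ for the lattice map of $f$, by Lemma \ref{lem:iterationlemma} $\phi$ is injective and permutes the rays of $\Sigma$. After passing to an iterate we may assume $\phi$ fixes every ray, and then by Lemma \ref{lem:toricmorphismbackbone} $\phi$ is diagonalizable with positive integer eigenvalues $\lambda_1, \ldots, \lambda_s$. That same lemma yields a decomposition
\[ X_\Sigma \cong X_{\Sigma_1} \times \cdots \times X_{\Sigma_s}, \qquad f = f_1 \times \cdots \times f_s, \]
where $X_{\Sigma_i}$ is the $\QQ$-factorial projective toric variety corresponding to the $\lambda_i$-eigenspace $E_i$, and $\phi|_{E_i}$ is scalar multiplication by $\lambda_i$. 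Lemma \ref{lem:toricsk} then gives that $f_i^*$ acts on $N^1(X_{\Sigma_i})_\RR$ by scalar multiplication by $\lambda_i$, so each $f_i$ is polarized with $\lambda_1(f_i) = \lambda_i$. Since the Néron--Severi group of a product of $\QQ$-factorial projective toric varieties decomposes as the direct sum of factor Néron--Severi groups, $\lambda_1(f) = \max_i \lambda_i$.

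For any $P = (P_1, \ldots, P_s) \in X_\Sigma(\overline{\QQ})$ the product identity $\alpha_f(P) = \max_i \alpha_{f_i}(P_i)$ holds. Set $I = \{i : \lambda_i = \lambda_1(f)\}$, which is nonempty because $\lambda_1(f) > 1$. Then $\alpha_f(P) < \lambda_1(f)$ forces $\alpha_{f_i}(P_i) < \lambda_1(f_i)$ for every $i \in I$. Because $f_i$ is polarized, parts (4)--(5) of Theorem \ref{thm:canheight1} combined with Theorem \ref{theorem:KS2} show this happens if and only if $P_i$ is pre-periodic for $f_i$; Northcott's theorem, applied to the ample canonical height $\hat{h}_{D_i}$, then gives that the set $\mathrm{PrePer}(f_i)_{\leq N}$ of pre-periodic points of degree at most $N$ is finite. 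Therefore
\[ \mathcal{S}(X_\Sigma, f, N) \;\subseteq\; \bigcap_{i \in I} \pi_i^{-1}\!\bigl(\mathrm{PrePer}(f_i)_{\leq N}\bigr), \]
a finite union of proper closed subvarieties of $X_\Sigma$ (since $\dim X_{\Sigma_i} \geq 1$ for every $i \in I$), and so is not Zariski dense.

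All the nontrivial work is packaged into Lemmas \ref{lem:toricmorphismbackbone} and \ref{lem:toricsk}; once those are in hand the proof reduces to the product formula for arithmetic degrees together with standard Northcott finiteness. The main conceptual obstacle in this circle of ideas is the structural decomposition itself, which has already been carried out; the present theorem is then a short consequence.
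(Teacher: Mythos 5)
Your proof is correct and takes essentially the same approach as the paper: decompose $X_\Sigma$ into factors on which $f$ is polarized, use the product formula $\alpha_f(P)=\max_i\alpha_{f_i}(P_i)$, and apply Northcott finiteness to the vanishing locus of the canonical height on a factor of maximal dynamical degree. The only difference is presentational — the paper routes through the simple decomposition and Theorem~\ref{thm:simple=linsimple} to conclude polarization of the factors, whereas you invoke the eigenspace decomposition of Lemma~\ref{lem:toricmorphismbackbone} directly together with Lemma~\ref{lem:toricsk}; since Theorem~\ref{thm:simple=linsimple} is itself proved from those two lemmas, the substance is identical. One small imprecision: you assert ``each $f_i$ is polarized,'' but a factor with eigenvalue $\lambda_i=1$ is not polarized; this is harmless because your argument only uses polarization for $i\in I$, where $\lambda_i=\lambda_1(f)>1$.
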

	\begin{proof}
		Suppose that $f$ is induced by a lattice mapping $\phi$. By \ref{lem:iterationlemma} we may assume that $\phi^m$ fixes the rays of $X_\Sigma$. Now write
		\begin{equation}
			X_{\Sigma_1}\times\dots\times X_{\Sigma_r}
		\end{equation}
		where each $X_{\Sigma_i}$ is simple. Since $\phi^m$ fixes the rays of $\Sigma$ we have that $f^m=h_1\times\dots \times h_r$ where $h_i\colon X_{\Sigma_i}\ra X_{\Sigma_i}$ is a surjective equivariant endomorphism. Note that
		\[\lambda_1(f^m)=\max_{i=1}^r\{\lambda_1(h_i)\}.\]
		We may assume that $\lambda_1(f)>1$ as the sAND conjecture is trivial when $\lambda_1(f)=1$. For some $i$ we have $\lambda_1(f^m)=\lambda_1(h_i)>1$. Let $\pi_i\colon X_\Sigma\ra X_{\Sigma_i}$ be the canonical projection. Since $X_{\Sigma_i}$ is simple it is linearly simple by \ref{thm:simple=linsimple}. As $\lambda_1(h_i)>1$ we have that $h_i^*\colon N^1(X_{\Sigma_i})_\RR\ra N^1(X_{\Sigma_i})_\RR $ is multiplication by $\lambda_1(h_i)$. Now fix a number field $K$ over which our data is defined and choose $d\geq 1$. The sAND conjecture is equivalent to the assertion that
		\begin{equation}
			\McS_{K,d}=\{P\in X(\Qb)\colon [K(P):K]\leq d, \alpha_f(P)<\lambda_1(f)\}
		\end{equation} 
		is not Zariski dense. Note that $h_i\colon X_{\Sigma_i}\ra X_{\Sigma_i}$ is a surjective toric morphism and $X_{\Sigma_i}$ is simple. Therefore by \ref{thm:simple=linsimple} $h_i$ is linearly simple. As $1<\lambda_1(f^m)=\lambda_1(h_i)$ we may assume that $h_i$ is polarized, in particular there is an ample divisor $H_i$ on $X_{\Sigma_i}$ with $h_i^*H_i\Lin\lambda_1(h_i)H_i$ Thus $\alpha_{h_i}(P^\pp)=\lambda_1(h_i)$ unless the canonical height $\hat{h}_{H_i}(P^\pp)=0$. By the Northcott property for $\hat{h}_{H_i}$ there are finitely many points $P^\pp\in X_{\Sigma_i}(\Qb)$ with $\hat{h}_{H_i}(P^\pp)=0$ and $[K:K(P)]\leq d$. Let $Q_{i1},...,Q_{is}\in X_{\Sigma_i}(\Qb)$ be the points with vanishing canonical height just described and residue degree at most $d$ that was just described. Note that we have 
		\[\alpha_{f^m}(P)=\max_{i=1}^r\{\alpha_{h_i}(\pi_i(P))\}.\]
		Therefore, we have that $\alpha_{f^m}(P)=\lambda_1(h_i)=\lambda_1(f^m)$ except possibly on the proper Zariski closed set \[\bigcup_{j=1}^{s}\pi_i^{-1}(Q_{ij}).\]
		So we have that
		\[\alpha_f(P)^m=\alpha_{f^m}(P)=\lambda_1(f^m)=\lambda_1(f)^m\]
		except at 
		\[\bigcup_{j=1}^s\pi_i^{-1}(Q_{ij}).\]
		Taking $m^{th}$ roots now gives the desired result. 
		
	\end{proof}
	We now turn to realizability.
	\begin{theorem}\label{thm:equivariantrealizability}
		Let $X_\Sigma$ be a $\QQ$-factorial toric variety defined over $\Qb$ and $f\colon X_\Sigma\ra X_\Sigma$ an equivariant surjective toric morphism. Then $f$ has arithmetic eigenvalues.
	\end{theorem}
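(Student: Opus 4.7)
The plan is to reduce to the simple (equivalently, linearly simple) case by applying the eigenspace decomposition of Lemma \ref{lem:toricmorphismbackbone}, then use polarization on each factor to produce explicit points realizing each potential arithmetic degree. By Proposition \ref{prop:iterationprop}, it suffices to prove the statement after replacing $f$ by an iterate, and by Lemma \ref{lem:iterationlemma} and Lemma \ref{lem:toricmorphismbackbone} we may assume that the underlying lattice map $\phi$ of $f$ is diagonalizable and fixes the rays of $\Sigma$. Write $X_\Sigma = X_{\Sigma_1}\times \cdots \times X_{\Sigma_r}$ for the decomposition provided by Lemma \ref{lem:toricmorphismbackbone}, where each $X_{\Sigma_i}$ corresponds to an eigenspace $E_i$ on which $\phi$ acts by an eigenvalue $\mu_i > 0$. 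Because $\phi$ preserves the decomposition factor by factor, $f$ splits correspondingly as a product $f = h_1\times \cdots \times h_r$ with each $h_i\colon X_{\Sigma_i}\ra X_{\Sigma_i}$ an equivariant surjective endomorphism induced by scalar multiplication by $\mu_i$ on $E_i$.

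The next step is to refine this decomposition until each factor is simple. If some $X_{\Sigma_i}$ is not simple, iterate the argument on that factor: Lemma \ref{lem:toricmorphismbackbone} applied to $h_i$ gives a further product decomposition. This recursion terminates after finitely many steps because $\dim X_\Sigma$ is finite, yielding $X_\Sigma = Y_1 \times \cdots \times Y_s$ with each $Y_j$ simple and $f = g_1 \times \cdots \times g_s$ equivariant on each factor. By Theorem \ref{thm:simple=linsimple}, each $Y_j$ is linearly simple, so after iterating once more (again invoking \ref{prop:iterationprop}) we may assume that $g_j^*$ acts on $N^1(Y_j)_\RR$ as scalar multiplication by some integer $\lambda_j \geq 1$ via Lemma \ref{lem:toricsk}. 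In particular, when $\lambda_j > 1$ there is an ample class $H_j$ on $Y_j$ with $g_j^* H_j \Lin \lambda_j H_j$, so $g_j$ is polarized. Since $N^1(X_\Sigma)_\RR = \bigoplus_j \pi_j^* N^1(Y_j)_\RR$ and $f^*$ preserves this splitting, the eigenvalues of $f^*$ are precisely the $\lambda_j$ (with multiplicity $\rho(Y_j)$), so the potential arithmetic degrees of $f$ are the $\lambda_j$ with $\lambda_j > 1$.

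Finally, fix an index $i$ with $\lambda_i > 1$. Since $g_i$ is polarized on $Y_i$ with dynamical degree $\lambda_i > 1$, Theorem \ref{theorem:MSS} produces a point $P_i \in Y_i(\Qb)$ with $\alpha_{g_i}(P_i) = \lambda_1(g_i) = \lambda_i$; alternatively, this follows from Theorem \ref{thm:canheight1}(5) applied to any non-preperiodic point of $Y_i$. For every $j \ne i$, choose a torus-fixed point $Q_j \in Y_j(\Qb)$ corresponding to a maximal cone of the fan of $Y_j$; since the lattice map defining $g_j$ fixes all rays, $Q_j$ is fixed by $g_j$ and hence $\alpha_{g_j}(Q_j) = 1$. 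Using $\alpha_{g_1 \times \cdots \times g_s}(R_1, \ldots, R_s) = \max_j \alpha_{g_j}(R_j)$, the point $P = (Q_1, \ldots, Q_{i-1}, P_i, Q_{i+1}, \ldots, Q_s)$ satisfies $\alpha_f(P) = \lambda_i$. Hence every potential arithmetic degree of $f$ is realized.

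The main obstacle is not analytic but bookkeeping: ensuring that the factorization of $f$ and the identification of the eigenvalues of $f^*$ with the scalars $\lambda_j$ survives the iterations demanded by Lemma \ref{lem:toricmorphismbackbone} and Theorem \ref{thm:simple=linsimple}. Once one observes that each recursive step strictly decreases $\max_j \dim Y_j$ (or preserves it while strictly refining the decomposition), termination is clear, and the rest is a straightforward assembly of points across the product.
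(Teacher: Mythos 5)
Your overall plan mirrors the paper's: iterate to fix the rays, decompose into simple factors, use simple $=$ linearly simple (Theorem \ref{thm:simple=linsimple}) together with Lemma \ref{lem:toricsk} to see each factor endomorphism acts on $N^1$ by a scalar, and then assemble a point by pairing a height-positive point on one distinguished factor with fixed points on the remaining factors. The final assembly is essentially identical (the paper uses the torus identity $e_j$, you use a torus-fixed point of a maximal cone; both are visibly $g_j$-fixed once the lattice map fixes all rays), and the computation $\alpha_f(P)=\max_j\alpha_{g_j}(\pi_j(P))$ closes both arguments.

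There is, however, a gap in your refinement step. You propose to pass from the eigenspace decomposition of Lemma \ref{lem:toricmorphismbackbone} to a decomposition into \emph{simple} factors by applying Lemma \ref{lem:toricmorphismbackbone} recursively to any non-simple factor $X_{\Sigma_i}$. But after the first application, the lattice map underlying each $h_i$ is scalar multiplication by the single eigenvalue $\mu_i$ on $E_i$, so applying Lemma \ref{lem:toricmorphismbackbone} to $h_i$ returns the trivial decomposition (one eigenspace, all of $E_i$). The recursion therefore never refines anything, and the termination argument has nothing to terminate. The paper avoids this by never routing through the eigenspace decomposition: it fixes an arbitrary decomposition $X_\Sigma=X_{\Sigma_1}\times\cdots\times X_{\Sigma_r}$ into simple factors (which exists by induction on dimension, via Definition \ref{def:simpletoricvar}) and observes that once $\phi^m$ fixes every ray of $\Sigma$ it automatically preserves each lattice summand $N_i$, because the rays of the complete subfan $\Sigma_i$ span $N_i$; hence $f^m=h_1\times\cdots\times h_r$ along that product. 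The easiest repair of your version is the same observation in disguise: once each $h_i$ is induced by scalar multiplication on $N_i$, it commutes with \emph{any} product decomposition of $X_{\Sigma_i}$, so you can refine to simple factors by hand rather than by citing Lemma \ref{lem:toricmorphismbackbone} again. With that fix, the subsequent invocation of Theorem \ref{thm:simple=linsimple} and ``iterating once more'' becomes redundant (each $g_j$ is already induced by a scalar on its lattice), but it is not incorrect.
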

	\begin{proof}
		Suppose that $f$ is induced by a lattice mapping $\phi$. By \ref{lem:iterationlemma} we may assume that $\phi^m$ fixes the rays of $X_\Sigma$. Now write
		\begin{equation}
			X_{\Sigma_1}\times\dots\times X_{\Sigma_r}
		\end{equation}
		where each $X_{\Sigma_i}$ is simple. Since $\phi^m$ fixes the rays of $\Sigma$ we have that $f^m=h_1\times\dots \times h_r$ where $h_i\colon X_{\Sigma_i}\ra X_{\Sigma_i}$ is a surjective equivariant endomorphism. By \ref{prop:iterationprop} we may replace $f$ with $f^m$ and prove the result. We may assume that $f=h_1\times\dots\times h_r$. If $\lambda_1(f)=1$ then there is nothing to prove. Otherwise assume that $\lambda_1(f)>1$. Note that  the Picard number of $X_\Sigma$ is $d-n$ where $d$ is the number of rays in $\Sigma$. Let $d_i$ be the number of rays in $\Sigma_i$ and $n_i$ the dimension of $X_{\Sigma_i}$. Now note that $\pi_i^*\colon N^1(X_{\Sigma_i})_\RR\ra N^1(X_\Sigma)_\RR$ is an injection and so the image has dimension $d_i-n_i$. We have that $\bigoplus_{i=1}^r \pi_i^*N^1(X_{\Sigma_i})_\RR\subseteq N^1(X_\Sigma)_\RR$ has rank
		\[\sum_{i=1}^r(d_i-n_i)=d-n\]
		as $\sum_{i=1}^rd_i=d$ and $\sum_{i=1}^rn_i=n=\dim X_\Sigma$.  Thus 
		\[N^1(X_\Sigma)_\RR=\bigoplus_{i=1}^r \pi_i^*N^1(X_{\Sigma_i})_\RR\]
		and the action of $f$ on $N^1(X_\Sigma)_\RR$ is given by
		\[f^*\sum_{i=1}^rD_i=\sum_{i=1}^rh_i^*D_i\]
		where $D_i\in \pi_i^*N^1(X_{\Sigma_i})_\RR$. It follows that the only eigenvalues for $f^*$ are the eigenvalues of the various $h_i^*$. This means that the eigenvalues of $f^*$ are precisely the integers $n_i=\lambda_1(h_i)$. This is because $h_i\colon X_{\Sigma_i}\ra X_{\Sigma_i}$ is a surjective endomorphism with $ X_{\Sigma_i}$ simple, by \ref{thm:simple=linsimple} we have that $h_i^*\colon N^1(X_{\Sigma_i})_\RR\ra N^1(X_{\Sigma_i})_\RR$ acts by multiplication by $n_i=\lambda_1(h_i)$. Now consider any $n_i>1$. Choose $P\in X_{\Sigma_i}(\Qb)$ with $\alpha_{h_i}(P)=n_i$. Let $e_j$ be the identity of the torus for $X_{\Sigma_j}$ Let $Q$ be the point of $X_\Sigma$ whose $i^{th}$ coordinate is $P$ and for $j\neq i$ the $j^{th}$ coordinate is $e_j$. In other words, $Q$ is a point with $\pi_i(Q)=P$ and $\pi_j(Q)=e_j$ for $j\neq i$. Note that $h_j\colon X_{\Sigma_j}\ra X_{\Sigma_j}$ is induced by a lattice homomorphism that is multiplication by a scalar. So $h_j(t_1,...,t_s)=(t_1^{n_j},...,t_s^{n_j})$ on the torus of $X_{\Sigma_j}$. Thus $h_j(e_j)=e_j$. Then we have that $\alpha_{h_j}(\pi_j(Q))=1$ for $j\neq i$ and $\alpha_{h_i}(\pi_i(Q))=n_i$. Thus, we have that 
		\[\alpha_f(P)=\max_{j=1}^r\{\alpha_{h_j}(\pi_j(P))\}=n_i\]
		as needed. 
	\end{proof}
	In conclusion, equivariant toric surjective morphisms are built out of polarized morphisms of simple $\QQ$-factorial toric varieties. This is reminiscent of the program to understand surjective endomorphisms of projective varieties admitting int-amplified endomorphisms. Notice that a polarized endomorphism is in fact int-amplified, so we have realized this part of the program for this special well behaved class of endomorphisms. 
	
\end{subsubsection}

\begin{subsubsection}{Non-equivariant morphisms}\label{subsubsec:nontoricrealizability}

We now turn to the general case of non-equivariant surjective endomorphisms of $\QQ$-factorial toric varieties. 
\begin{proposition}\label{prop:basictoric1}
	Let $X_\Sigma$ be a $\QQ$-factorial projective toric variety with fan $\Sigma\subseteq N$ and $\tau\in \Sigma$. Then $V(\tau)$ is $\QQ$-factorial and $\rho(V(\tau))=\rho(X_\Sigma)$. 
\end{proposition}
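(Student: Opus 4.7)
The plan is to exploit the toric description of $V(\tau)$: it is the projective toric variety associated to the star fan $\textrm{Star}(\tau)$ inside the lattice $N(\tau)_\RR := N_\RR / \langle \tau\rangle_\RR$, whose cones are the images $\bar\sigma = (\sigma + \langle\tau\rangle_\RR)/\langle\tau\rangle_\RR$ for $\sigma \in \Sigma$ with $\tau \preceq \sigma$. In particular $V(\tau)$ has dimension $n - \dim\tau$, where $n = \dim X_\Sigma$, and its rays are indexed by the cones of $\Sigma$ of dimension $\dim\tau + 1$ containing $\tau$. This reduces everything to combinatorics of $\Sigma$.

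For the $\QQ$-factoriality I would argue directly on the level of cones. Since $X_\Sigma$ is $\QQ$-factorial, each cone of $\Sigma$ is simplicial. Fix $\sigma \succeq \tau$ and let $v_1,\dots,v_j$ be the ray generators of $\tau$ and $v_{j+1},\dots,v_k$ the remaining ray generators of $\sigma$. The $v_1,\dots,v_k$ are linearly independent in $N_\RR$, so the classes $\bar v_{j+1},\dots,\bar v_k$ are linearly independent in $N(\tau)_\RR$ (a relation among them would, after lifting and combining with a relation inside $\langle\tau\rangle_\RR$, contradict the independence of the $v_i$). Hence every maximal cone of $\textrm{Star}(\tau)$ is simplicial, so $\textrm{Star}(\tau)$ is simplicial and $V(\tau)$ is $\QQ$-factorial.

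For the Picard number I would invoke the standard formula $\rho(Y) = |\Sigma_Y(1)| - \dim Y$ valid for any complete simplicial projective toric variety $Y$, which one obtains from the exact sequence $0 \to M \to \ZZ^{\Sigma_Y(1)} \to \textrm{Cl}(Y) \to 0$ together with the identification $\textrm{Pic}(Y)_\QQ = \textrm{Cl}(Y)_\QQ$ in the $\QQ$-factorial case (see \cite[Thm.~4.2.1, Prop.~4.2.5]{CLOToric}). Applying this to both $X_\Sigma$ and $V(\tau)$ reduces the equality $\rho(V(\tau)) = \rho(X_\Sigma)$ to the combinatorial identity $|\textrm{Star}(\tau)(1)| = |\Sigma(1)| - \dim\tau$, i.e.\ that each of the $|\Sigma(1)| - \dim\tau$ rays of $\Sigma$ not contained in $\tau$ spans, together with $\tau$, a cone of $\Sigma$.

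The main obstacle is this last combinatorial statement, which is where all the real content sits; naive small examples (e.g.\ a single ray in a product of projective lines) show that it can fail without extra hypotheses, so I would expect the proof to lean crucially on the precise form of $\tau$ being used here and on completeness/simpliciality to force the pairing. The strategy I would pursue is to take a ray $\rho \in \Sigma(1)$ not lying in $\tau$ and use the completeness of $\Sigma$ to express a generic vector in $\tau + \RR_{\geq 0}\rho$ as a non-negative combination of ray generators of a top-dimensional cone, then use simpliciality to pass to a minimal face containing both $\tau$ and $\rho$; if this step goes through one recovers exactly the needed pairing. Once that identity is in hand the proposition follows immediately from the two ingredients above.
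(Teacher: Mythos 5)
Your argument for $\QQ$-factoriality is correct and essentially what the paper does: passing to the star fan and observing that independence of the ray generators of each $\sigma \succeq \tau$ descends to independence of the residual generators in $N(\tau)_\RR$.

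Your skepticism about the Picard number identity, however, is not just caution --- it is conclusive, and your counterexample kills the proposition as stated. Concretely, take $X_\Sigma = \PP^1 \times \PP^1$, so $\rho(X_\Sigma) = 2$, and let $\tau$ be the ray generated by $(1,0)$. Then $\textnormal{Star}(\tau)$ has exactly two rays (the images of $(0,1)$ and $(0,-1)$), because $(-1,0)$ spans no cone of $\Sigma$ together with $\tau$. Hence $V(\tau) \cong \PP^1$ and $\rho(V(\tau)) = 1 \neq 2$. The combinatorial identity $|\textnormal{Star}(\tau)(1)| = |\Sigma(1)| - \dim\tau$ that you correctly isolate as the crux is simply false in general: a ray $v_j \notin \tau$ contributes a ray of $\textnormal{Star}(\tau)$ only when some cone of $\Sigma$ contains both $\tau$ and $v_j$, and completeness plus simpliciality do not force this. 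Your suggested repair via completeness cannot go through for exactly this reason: a generic vector of $\tau + \RR_{\geq 0}v_j$ lands in some maximal cone, but there is nothing forcing that cone to contain $\tau \cup \{v_j\}$ as a face.

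The paper's own proof commits precisely the error you anticipated: it asserts without justification that ``the rays of $V(\tau)$ are $\bar v_{t+1},\ldots,\bar v_d$,'' i.e.\ that every ray of $\Sigma$ outside $\tau$ descends to a ray of the star, and then counts. So the gap is in the paper, not in your reading of it. For context on why this may have gone unnoticed: the two places the proposition is invoked either use only the $\QQ$-factoriality conclusion (Lemma~\ref{lem:toricpre2}), or are set in the case $\rho(X_\Sigma)=1$ (Lemma~\ref{lem:toricpre1}), where the statement does happen to hold --- a complete simplicial fan with exactly $n+1$ rays is the normal fan of a simplex, so every ray outside $\tau$ does span a cone with $\tau$, and the ray count works out. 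A correct fix is therefore either to add the hypothesis $\rho(X_\Sigma)=1$ to the Picard-number conclusion, or to state only the $\QQ$-factoriality of $V(\tau)$, which is all that is needed in general.
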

\begin{proof}
	$V(\tau)$ is the toric variety with fan $\textnormal{Star}(\tau)$. Let $v_1,...,v_d$ be the rays of $\Sigma$. After reordering we have that $v_1,...,v_t$ are the rays of $\tau$. Given a cone $\sigma=\textnormal{Cone}(v_1,...,v_t,v_{t+1},...,v_s)$ with face $\tau$ the associated cone in $\textnormal{Star}(\tau)$ is given by $\textnormal{Cone}(\bar{v}_{t+1},...,\bar{v}_s)$. Now suppose that $\bar{v}_{t+1},...,\bar{v}_s$ was not independent. Then we could find scalars not all zero with 
\[a_{t+1}\bar{v}_{t+1}+...+a_s\bar{v}_s=0\]
which means we can find scalars $a_1,\ldots,a_t$ with
\[a_{t+1}v_{t+1}+...+a_sv_s=a_1v_1+\ldots+a_tv_t\]
which contradicts $\sigma$ being a simplicial cone. Thus $V(\tau)$ is simplicial. The rays of $V(\tau)$ are then  $\bar{v}_{t+1},...,\bar{v}_d$. Since $V(\tau)$ has dimension $n-t$ and $d=n+\rho$ we have that there are $d-t=n+\rho-t=n-t+\rho$ rays. So $\rho(V(\sigma))=\rho(X_\Sigma)$ as desired. 
\end{proof}
\begin{lemma}\label{lem:toricpre1}
	Let $X$ be a $\QQ$-factorial projective toric variety of Picard number 1. Let $f\colon X\ra X$ be a surjective endomorphism. Then $f$ has a pre-periodic point.
\end{lemma}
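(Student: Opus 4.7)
The plan is to split on whether $f$ is an automorphism. Since $\rho(X)=1$, the space $N^{1}(X)_{\RR}$ is one-dimensional and $f^{*}$ acts by multiplication by some positive integer $q$ (positive because $f^{*}$ preserves the ample cone, integer because it preserves the N\'eron--Severi lattice). The projection formula gives $\deg f=q^{\dim X}$, so the cases are $q=1$ and $q>1$.

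For the case $q>1$, the equality $f^{*}H\equiv qH$ for any ample $H$ means $f$ is polarized, and Theorem \ref{thm:canheight1} supplies a canonical height $\hat h_{H}$ whose vanishing locus coincides with the set of preperiodic points. To see that this set is non-empty I would invoke Fakhruddin's density theorem for polarized endomorphisms on a projective variety; alternatively, using that a $\QQ$-factorial projective toric variety of Picard number $1$ is a weighted projective space $\PP(a_{0},\dots,a_{n})$ whose rational cohomology equals that of $\PP^{n}$, one may compute the Lefschetz number of each iterate $f^{m}$ as $\sum_{j=0}^{\dim X}q^{mj}>0$ and conclude that every $f^{m}$ has a fixed point.

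For the case $q=1$, I would first argue that $f$ is automatically an automorphism. Indeed, if an irreducible curve $C$ were contracted by $f$, then by the projection formula $H\cdot C=f^{*}H\cdot C=H\cdot f_{*}C=0$, contradicting the ampleness of $H$; hence $f$ contracts no curves and is quasi-finite, and being a proper morphism of degree one to a normal variety, is an isomorphism by Zariski's Main Theorem. I would then form the Zariski closure $G$ of $\langle f\rangle$ inside the automorphism group scheme $\Aut(X)$, which is a commutative algebraic subgroup: for projective toric varieties $\Aut(X)$ is a linear algebraic group of finite type by work of Cox and Demazure. Its identity component $G^{0}$ is connected commutative, hence solvable, so Borel's fixed point theorem applied to the action on the projective variety $X$ produces a point $x\in X$ fixed by all of $G^{0}$. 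Since $G/G^{0}$ is finite, some iterate $f^{m}$ lies in $G^{0}$ and therefore fixes $x$, making $x$ periodic, hence preperiodic, for $f$.

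The main obstacle is the automorphism case, where one must set up the algebraicity of $\Aut(X)$ in order to invoke Borel's fixed point theorem; the polarized case is comparatively routine once one has either the canonical height of Theorem \ref{thm:canheight1} combined with Fakhruddin's density theorem, or the explicit Lefschetz trace computation on a weighted projective space.
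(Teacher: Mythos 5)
Your proof is correct but takes a genuinely different route in the automorphism case, which is where the real work lies. The polarized case ($q>1$) is handled exactly as in the paper, via Fakhruddin's density theorem; your alternative Lefschetz computation also works, with the caveat that a $\QQ$-factorial projective toric variety of Picard number one need only be a \emph{fake} weighted projective space (a finite quotient of one), though its rational cohomology is still that of $\PP^n$, so $L(f^m)=\sum_{j=0}^{n}q^{mj}>0$ goes through unchanged. For $q=1$ you take the Zariski closure $G$ of $\langle f\rangle$ inside $\Aut(X)$, note that $G^0$ is connected abelian hence solvable, apply Borel's fixed point theorem to produce a $G^0$-fixed point, and then use finiteness of $G/G^0$ to promote it to a periodic point of $f$. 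The paper instead inducts on $\dim X$: the singular locus of $X$ is an intrinsic union of orbit closures $V(\sigma)$, so after iterating $f$ preserves each irreducible component; these are again $\QQ$-factorial projective toric varieties of Picard number one (Proposition \ref{prop:basictoric1}) and of smaller dimension, while a smooth Picard-number-one toric variety is $\PP^n$, where every automorphism has a fixed eigenvector. Your Borel argument is slicker and avoids the descent through the toric stratification, but it leans on the structure theory of $\Aut(X)$ for (possibly singular) projective toric varieties — that it is a linear algebraic group of finite type with finitely many components (Demazure, Cox, B{\"u}hler–Cox) — whereas the paper's induction needs only elementary toric geometry plus the case of $\PP^n$. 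Both approaches are valid.
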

\begin{proof}
	Suppose that $\lambda_1(f)>1$. Then by a result of Fakkruddin \cite[Theorem 5.1]{MR1995861} we have that the set of pre-periodic points is dense in $X$. So we may assume that $\lambda_1(f)=1$ and that $f$ is an automorphism. We now induct on $\dim X=d$. When $d=1$ we have that $X=\PP^1$ and an automorphisms of $\PP^n$ always has a fixed point given by an eigenvector for an associated matrix. Now let $d>1$. First suppose that $X$ is singular. Let $S$ be the singular locus of $X$. Recall that we may write \[S=\bigcup_{\sigma\in I}V(\sigma)\] where $I$ is the set of singular cones of the fan of $\Sigma$. The minimal singular cones thus are the components of the singular locus $S$ and $f$ permutes them being an automorphism. After iterating $f$ we may assume that $f$ fixes the components $V(\sigma)$ and thus we obtain $f^k\colon V(\sigma)\ra V(\sigma)$. Now $V(\sigma)$ is a torus closure of a $\QQ$-factorial toric variety and so is $\QQ$-factorial and of Picard number 1 by Proposition \ref{prop:basictoric1}. So by induction $f^k$ has a pre-periodic point and therefore so does $f$. Now assume that $X$ is smooth. Since $\rho(X)=1$ we have that $X=\PP^n$ (all smooth toric varieties of Picard number 1 are projective spaces) for some $n$ and as noted above every automorphism of $\PP^n$ has a fixed point. 
\end{proof}

\begin{lemma}\label{lem:toricpre2}
	Let $X$ be a $\QQ$-factorial projective toric variety. Let $f\colon X\ra X$ be a surjective endomorphism. Then $f$ has a pre-periodic point.
\end{lemma}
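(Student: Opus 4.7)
The plan is to proceed by induction on the lexicographic pair $(\dim X, \rho(X))$. The base cases are $\dim X = 0$ (trivial) and $\rho(X) = 1$ (which is exactly Lemma \ref{lem:toricpre1}). So assume $\dim X \geq 1$ and $\rho(X) \geq 2$.

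The crucial input is that $X$, being a $\QQ$-factorial projective toric variety, admits a polarized and therefore int-amplified endomorphism---for instance the $n$-th power map on the dense torus for $n \geq 2$. This activates Theorem \ref{thm:MengZhangMMP}: after replacing $f$ with an iterate (harmless, since a pre-periodic point for $f^n$ is pre-periodic for $f$), we obtain an $f$-equivariant MMP
\[
X = X_1 \dashrightarrow X_2 \dashrightarrow \cdots \dashrightarrow X_r,
\]
with surjective endomorphisms $f_i\colon X_i \to X_i$ commuting with each contraction $g_i$ of a $K_{X_i}$-negative extremal ray, and with $X_r$ a $Q$-abelian variety. Since $X$ is rationally connected and rational connectedness is preserved by every step of the MMP, $X_r$ is simultaneously rationally connected and $Q$-abelian. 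But a $Q$-abelian variety is a finite image of an abelian variety, and a rationally connected abelian variety is a point. Hence $X_r$ is a point, and $f_r$ trivially has a fixed point.

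I would then descend through the MMP in reverse: assume $f_{i+1}$ has a pre-periodic point $Q_{i+1}$, which after iteration we take to be fixed. If $g_i$ is a fibering contraction, then $\dim X_{i+1} < \dim X_i$, so the outer induction supplies $Q_{i+1}$ at the target, and the $f_i$-invariant fiber $g_i^{-1}(Q_{i+1})$ is a closed subvariety of strictly smaller dimension, which by the dimension induction contains a pre-periodic point. If $g_i$ is a divisorial contraction, then $\rho(X_{i+1}) < \rho(X_i)$, and a pre-periodic point of $f_{i+1}$ outside the image of the exceptional divisor lifts uniquely to $X_i$; if it lies in that image, the corresponding $f_i$-invariant fiber is a proper closed subvariety of smaller dimension to which the dimension induction applies. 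For flips $X_i \dashrightarrow X_{i+1}$, the two varieties are isomorphic in codimension two, and a pre-periodic point of $f_{i+1}$ in the locus of isomorphism lifts directly.

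The main obstacle is the control of the fibers $g_i^{-1}(Q_{i+1})$: they are projective $f_i$-invariant subvarieties of smaller dimension, but are not automatically $\QQ$-factorial projective toric varieties themselves, so the outer induction does not apply verbatim to them. One must either choose the pre-periodic points $Q_{i+1}$ in a torus-invariant position (e.g., a torus-fixed point of $X_{i+1}$, where the fiber of a toric morphism is a genuine toric subvariety) or strengthen the inductive statement to Zariski density of pre-periodic points, so that $Q_{i+1}$ can always be placed in the generic locus where the fibers of a toric fibration are isomorphic to the toric generic fiber. The density strengthening behaves well under divisorial contractions and flips, and for fibering contractions propagates from density on the base together with existence on the fiber, making it the natural form of the inductive hypothesis to carry.
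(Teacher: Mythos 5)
Your overall strategy (use the existence of an int-amplified endomorphism on a toric variety to run an $f$-equivariant MMP and then descend) is in the same spirit as the paper's, but the paper takes a single step at a time rather than running the full MMP down to a $Q$-abelian variety, and this turns out to dissolve the obstacle you flag. The paper inducts only on $\dim X$. If $X$ is not a Mori fiber space, it admits a $K_X$-negative birational extremal contraction with exceptional locus $E$; after iterating $f$ one gets $f\colon E\ra E$, and since $E$ is a torus orbit closure, Proposition \ref{prop:basictoric1} makes it a $\QQ$-factorial projective toric variety of strictly smaller dimension, so induction applies directly. This handles divisorial and small contractions in one stroke (by working on $E$ rather than on $X_{i+1}$ or a fiber), so flips never have to be addressed and you never chase pre-periodic points through the flipped locus. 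If $X$ is a Mori fiber space $\phi\colon X\ra Y$ and $Y$ is positive-dimensional, induction gives a fixed point $Q\in Y$ and one restricts to the fiber $F=\phi^{-1}(Q)$. Your worry that $F$ ``is not automatically a $\QQ$-factorial projective toric variety'' is the crux, and the paper handles it by citing the toric fact (Cox--Little--Schenck 15.4.5) that \emph{all} fibers of a toric Mori fiber space are isomorphic to a single $\QQ$-factorial projective toric variety, regardless of whether the base point is torus-invariant. So neither of your suggested fixes (placing $Q$ at a torus-fixed point, or strengthening the hypothesis to Zariski density of pre-periodic points) is needed.

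The genuine gaps in your proposal are precisely the two places you recognized as troublesome. For divisorial contractions, you propose restricting to the fiber $g_i^{-1}(Q_{i+1})$ when $Q_{i+1}$ lies in the image of the exceptional divisor, but that fiber is not a priori a $\QQ$-factorial toric variety and your outer induction cannot be applied to it; the fix is simply to work on the exceptional locus itself, which is a torus orbit closure. For flips, you assert that a pre-periodic point of $f_{i+1}$ ``in the locus of isomorphism lifts directly,'' but you give no argument for why such a pre-periodic point must land in that locus, so as written the flip step is incomplete. The paper sidesteps both issues by never descending across a birational contraction: it only ever passes to the exceptional locus (for birational contractions) or to the base and a fiber (for fiber-type contractions), both of which are guaranteed to be toric and of smaller dimension.
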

\begin{proof}
	We induct on the dimension. If $\dim X=1$ the result follows from the result on $\PP^1$. Otherwise first suppose that $X$ is not a Mori-fiber space. Then $X$ admits $K_X$ negative birational extremal contraction. Let $E$ be the exceptional locus, then $\dim E<\dim X$ and we have $f\colon E\ra E$ after possibly iterating $f$. Since $E$ is an orbit closure, by Proposition \ref{prop:basictoric1} that $E$ is a $\QQ$-factorial toric variety. By induction we have that $f\colon E\ra E$ has a fixed point and we are done. Otherwise we may assume that $X$ is a Mori-fiber space $\phi\colon X\ra Y$. After iterating $f$ we have a diagram
	\[\xymatrix{X\ar[r]^f\ar[d]_\phi & X\ar[d]^\phi\\ Y\ar[r]_g & Y}\]
     By induction $g\colon Y\ra Y$ has a pre-periodic point. After iterating $f$ and $g$ we may assume that there is a point $Q$ such that $g(Q)=Q$. Then we obtain a mapping $f\colon F\ra F$ where $F=\phi^{-1}(Q)$. Here $F$ is a $\QQ$-factorial toric variety by \cite[15.4.5]{CLOToric}. If $\dim F<\dim X$ then by induction there is a pre-periodic point as needed. This is because $F$ is a normal projective variety, and the fibers of a Mori-fiber space are connected thus $F$ is irreducible. Now $f\colon F\ra F$ is finite mapping as $f$ is a finite mapping. So the image of $f$ is a $\dim F$-dimensional closed sub-variety, it follows that $f$ is surjective so we may apply the inductive hypothesis.  Otherwise $\dim F=\dim X$ and $Y$ is a point. Then $X$ has Picard number $1$ and Lemma \ref{lem:toricpre1}) gives the result.   	
\end{proof}	
\begin{theorem}\label{thm:toricrealworks}
	Let $X$ be $\QQ$-factorial toric variety defined over $\overline{\QQ}$. Let $f\colon X\ra X$ be a surjective endomorphism. Then $f$ has arithmetic eigenvalues. 
\end{theorem}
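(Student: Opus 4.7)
The plan is to run the equivariant toric minimal model program on $X$ and to realize each potential arithmetic degree at the unique MMP stage at which it first disappears from the eigenvalue spectrum. By Proposition~\ref{prop:iterationprop} I may freely replace $f$ by any iterate, so I first iterate $f$ until $f^*$ fixes every extremal ray of the finitely generated, rational polyhedral cone $\Nef(X)$ and the equivariant MMP of Theorem~\ref{thm:MengZhangMMP}
\[
X = X_0 \dashrightarrow X_1 \dashrightarrow \cdots \dashrightarrow X_r = \{\pt\}
\]
carries commuting surjective endomorphisms $f_i : X_i \to X_i$; the MMP terminates at a point because any $Q$-Abelian variety covered by a rationally connected toric variety must be zero-dimensional. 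Each $X_i$ is again a $\QQ$-factorial projective toric variety, and each step is a divisorial contraction, a fibering contraction, or a flip. The key observation is that, after a further iteration so that each $f_i^*$ fixes all rays of the full-dimensional polyhedral cone $\Nef(X_i)$, the spanning rays give a basis of nef eigenvectors, so each $f_i^*$ is diagonalizable with every eigenvalue admitting a nef eigendivisor.

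I proceed by reverse induction on $j$, starting from $j = r$: every eigenvalue $\mu$ of $f_j^*$ with $|\mu|>1$ is realized as an arithmetic degree on $X_j$. The base $j = r$ is vacuous. For the inductive step from $j+1$ to $j$, either $\mu$ is already an eigenvalue of $f_{j+1}^*$ (in which case the inductive hypothesis supplies $P_{j+1} \in X_{j+1}$ with $\alpha_{f_{j+1}}(P_{j+1}) = |\mu|$ that must be lifted to $X_j$) or $\mu$ is new at stage $X_j$. Since a flip induces an isomorphism $N^1(X_j)_\RR \cong N^1(X_{j+1})_\RR$, a new eigenvalue can arise only at a divisorial or fibering contraction $\phi_j : X_j \to X_{j+1}$; there Proposition~\ref{prop:birationalbackbone} (divisorial case) or Proposition~\ref{prop:fiberingbackbone} (fibering case) produces the desired realizing point, using the nef eigendivisor supplied by the diagonalization above and a fixed point on $X_{j+1}$ or on the image of the exceptional set, which Lemma~\ref{lem:toricpre2} provides after one further iteration.

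The main obstacle is the lifting step from $X_{j+1}$ to $X_j$. Through a flip or a divisorial contraction the birational map is an isomorphism off a closed subset of codimension at least one (codimension two for flips), canonical heights of corresponding eigendivisors agree on the common open set, and a generic preimage of $P_{j+1}$ carries the same arithmetic degree. The genuine difficulty is a fibering contraction $X_j \to X_{j+1}$: an arbitrary lift of $P_{j+1}$ into the fiber $\phi_j^{-1}(P_{j+1})$ only satisfies $\alpha_{f_j}(P_j) \geq |\mu|$, and one must ensure vanishing of the canonical height of the new eigenvalue introduced by $\phi_j$ at $P_j$ in case that new eigenvalue dominates $|\mu|$. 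My plan here is to first arrange $P_{j+1}$ to be a fixed point of $f_{j+1}$ whenever possible (again via Lemma~\ref{lem:toricpre2}), which makes the fiber $f_j$-invariant and reduces the question to dynamics on a lower-dimensional $\QQ$-factorial toric variety, where the decomposition $h_A = \hat{h}_{D_\mu} + h_H \circ \phi_j$ used in the proof of Proposition~\ref{prop:fiberingbackbone} isolates the desired canonical height behaviour. Pushing this induction from $j = r$ up to $j = 0$ delivers the arithmetic realization on $X = X_0$.
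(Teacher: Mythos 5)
Your proposal takes a genuinely different (and, as written, incomplete) route from the paper. The paper never runs a full MMP. Having iterated so that $f^*$ fixes all rays of the finitely generated cone $\Nef(X)$---whence every eigenvalue of $f^*$ has a nef eigendivisor, the same observation you make---it fixes the target eigenvalue $\lambda$ with nef eigendivisor $D_\lambda$, picks a \emph{single} facet $F$ of $\Nef(X)$ with $D_\lambda \notin F$, and lets $\phi\colon X\ra Y$ be the associated toric contraction. A conjugating endomorphism on $Y$ exists after a further iteration, Lemma~\ref{lem:toricpre2} supplies a fixed point on $\phi(E)$ (birational case) or on $Y$ (fibering case), and the computation of Proposition~\ref{prop:birationalbackbone} or~\ref{prop:fiberingbackbone} then realizes $\lambda$ directly on $X$, using that $D_\lambda+\phi^*H$ is ample precisely because $D_\lambda$ pairs positively with the contracted extremal ray. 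There is no induction over MMP stages and hence no lifting problem.

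The gap in your proposal is exactly where you sense it. If $\mu$ is an eigenvalue of $f_{j+1}^*$ with $|\mu|>1$ and $P_{j+1}$ realizes it, then $P_{j+1}$ is \emph{not} pre-periodic, so you cannot ``arrange $P_{j+1}$ to be a fixed point of $f_{j+1}$'': any fixed point has arithmetic degree $1\neq|\mu|$. The fiber $\phi_j^{-1}(P_{j+1})$ is therefore not $f_j$-invariant and the reduction to lower-dimensional dynamics does not apply. If you instead work over a genuine fixed point $Q$ of $f_{j+1}$, the height decomposition $h_A=\hat{h}_{D}+h_H\circ\phi_j$ of Proposition~\ref{prop:fiberingbackbone} only produces the eigenvalue whose eigendivisor $D$ is \emph{not} pulled back from $X_{j+1}$; it cannot detect the eigenvalue $\mu$ you are trying to lift, whose eigendivisor lies in $\phi_j^*N^1(X_{j+1})_\RR$. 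The structural obstruction is that the MMP dictates which extremal ray is contracted at stage $j$, and that ray may be dual to a facet which \emph{does} contain $D_\mu$, in which case this contraction cannot isolate $\mu$ at all. The repair is the paper's: for each eigenvalue $\lambda$ separately, choose the contraction of $X$ adapted to $D_\lambda$ and apply the backbone propositions once, directly on $X$.
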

\begin{proof}
	Let $\lambda$ be a potential arithmetic degree of $f$. After replacing $f$ with an iterate we may assume that $f^*$ fixes all rays of the nef cone of $X$. Let $D_\lambda$ be a nef eigendivisor for $\lambda$. Choose a facet $F$ of $\Nef(X)$ that does not contain $D_\lambda$ and let $\phi\colon X\ra Y$ be the associated extremal contraction. First suppose that $\phi$ is birational. Since a toric variety admits an int-amplified endomorphism, then after iterating $f$ by (\cite[Theorem 5.3]{MR4070310}) we have a conjugating diagram
    \[\xymatrix{X\ar[r]^f\ar[d]_\phi & X\ar[d]^\phi \\ Y\ar[r]_g & Y}\]
    Then we have that $F$ is identified with the Nef cone of $Y$. Let $H$ be an ample divisor on $Y$ such that $A=D_\lambda+\phi^*H$ is ample. This occurs as $D_\lambda$ does not appear in $\phi^*N^1(Y)_\RR$ by construction. Let $E$ be the exceptional locus of $\phi$. Let $Z=\phi(E)$. Notice that we obtain maps $f\colon E\ra E$ and $g\colon Z\ra Z$.  Using $\ref{lem:toricpre2}$ we may take $P$ to be a fixed point of $f$ in $E$ after potentially iterating $f$. Thus we have that $P=\phi(Q)$ is a fixed point of $g$ in $Z$. By the argument in Proposition \ref{prop:birationalbackbone} we have that $\lambda$ is realized.  Now suppose that $\phi$ is fibering. By $\ref{lem:toricpre2}$ we have that $g\colon Y\ra Y$ has a fixed point (after potentially iterating all morphisms) and so using the same argument as the previous paragraph and Proposition \ref{prop:fiberingbackbone} we have that $\lambda$ is realizable. 
\end{proof}
The key result here is two fold: 
\begin{enumerate}
	\item We could find pre-period points to construct good fibers.
	\item we could contract all faces of the nef cone, and that every such contraction corresponds to an extremal contraction of a pair $(X,D)$ which allowed us to conclude the result.
\end{enumerate}
The basic $\emph{reason}$ why the theorem is true, is that potential arithmetic degrees on the fibers can be realized, and in the toric case every potential arithmetic degree appears on the fibers of some extremal contraction. Notice that this shares many details with the argument for surjective equivariant morphisms.  
\end{subsubsection}
\end{section} 

\begin{section}{Realizability in the Int-amplified setting}\label{sec:Intamp1}

In this section we study the realizability question in the setting of varieties admitting int-amplified endomorphisms. We first give some basic results that illustrate the points of friction in this approach.

\begin{proposition}\label{prop:intprop1}
	Let $X$ be a $\QQ$-factorial variety with terminal singularities and finitely generated nef cone and $\Alb(X)=0$. Suppose that $f\colon X\ra X$ is a surjective endomorphism. Let $\phi \colon X\ra Y$ be a birational extremal contraction. Suppose that we have a diagram \[\xymatrix{X\ar[r]^f\ar[d]_\phi & X\ar[d]^\phi \\ Y\ar[r]_g & Y}\]
	Let $E$ be the exceptional locus of $\phi$ and let $Z=\phi(E)$. We have a second diagram given by
	\[\xymatrix{E\ar[r]^f\ar[d]_\phi & E\ar[d]^\phi \\Z\ar[r]_g & Z}\] 
	Suppose that every potential arithmetic degree of $f\mid_E\colon E\ra E$ is realized as an arithmetic degree. If every potential arithmetic degree of $g$ is realizable as an arithmetic degree and $g\colon Z\ra Z$ admits a pre-periodic point. then every potential arithmetic degree of $f$ is realizable as an arithmetic degree. 
\end{proposition}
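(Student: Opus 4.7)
The plan is to classify a potential arithmetic degree $\lambda$ of $f$ by whether it descends from the base $g$, and to reduce each case to one of the hypotheses. Since $\phi$ is a birational extremal contraction, $\rho(X) = \rho(Y)+1$ and the pullback $\phi^* \colon N^1(Y)_\RR \hookrightarrow N^1(X)_\RR$ is an injection whose image is stable under $f^*$ (because $\phi \circ f = g \circ \phi$); the transverse direction in $N^1(X)_\RR$ is spanned by the class of the exceptional divisor $E$. Consequently every eigenvalue of $f^*$ is either pulled back from an eigenvalue of $g^*$ or is a single new eigenvalue from the quotient, and I handle these two situations separately.

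In the first case, $\lambda$ is a potential arithmetic degree of $g$, and by hypothesis I pick $Q \in Y(\overline{\QQ})$ with $\alpha_g(Q) = |\lambda|$. Note that $Z = \phi(E)$ is $g$-invariant since $E$ is $f$-invariant (after iteration, using Proposition \ref{prop:iterationprop}) and $\phi$ intertwines $f$ and $g$; thus the forward orbit $\mathcal{O}_g(Q)$ either stays disjoint from $Z$ or enters $Z$ and remains there. In the disjoint subcase, I lift to $P = \phi^{-1}(Q) \in X \setminus E$; since $\phi$ is an isomorphism off $E$ and the $f$-orbit of $P$ stays off $E$, a direct height comparison (using ample $H$ on $Y$ and ample $A$ on $X$ with $A - \phi^*H$ effective and supported on $E$, so that its height stays bounded along the orbit of $P$) yields $\alpha_f(P) = \alpha_g(Q) = |\lambda|$. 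In the other subcase, after iterating I may assume $Q \in Z$ and $\alpha_{g|_Z}(Q) = |\lambda|$, so $|\lambda|$ is a potential arithmetic degree of $g|_Z$, and pulling back along the surjection $\phi|_E \colon E \to Z$ upgrades it to a potential arithmetic degree of $f|_E$; the hypothesis on $f|_E$ then produces $P \in E$ realizing it, and since an ample divisor on $X$ restricts to an ample divisor on $E$, $\alpha_f(P) = \alpha_{f|_E}(P) = |\lambda|$.

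In the second case, $\lambda$ is not a potential arithmetic degree of $g$, so $\lambda$ is the single transverse eigenvalue and its eigenspace is not contained in $\phi^* N^1(Y)_\RR$. Assuming a nef eigendivisor $D_\lambda$ for $\lambda$ exists, I apply Proposition \ref{prop:birationalbackbone} directly: the commutative diagram, the nef eigendivisor $D_\lambda$, and the hypothesized pre-periodic point of $g$ on $Z$ (which becomes a fixed point after iteration, again by Proposition \ref{prop:iterationprop}) together produce a point $P$ in the preimage of the fixed point with $\alpha_f(P) = |\lambda|$.

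The main obstacle is the second case: ensuring that the transverse eigendivisor for $\lambda$ is nef. This is not automatic for arbitrary birational extremal contractions, but is expected in the int-amplified setting where the minimal model program ensures that $f^*$ interacts favorably with the nef cone, particularly because the extremal ray being contracted is $K_X$-negative and Theorem \ref{thm:MengZhangMMP} controls how $f^*$ permutes the facets of $\NE(X)_\RR$. A secondary subtlety is the height comparison in the disjoint subcase of Case 1, which relies on the orbit of $P$ avoiding the exceptional locus $E$; asymptotically the contributions from $E$-supported divisors along such an orbit are bounded, so $h_A(f^n(P)) = h_{\phi^*H}(f^n(P)) + O(1) = h_H(g^n(Q)) + O(1)$, which gives the desired equality of arithmetic degrees.
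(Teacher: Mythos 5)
Your proposal is essentially the same decomposition the paper uses: split by whether $|\lambda|$ descends to $g$, handle the descending case by further splitting on whether the realizing orbit stays off $Z$ (birational-invariance / lift) or eventually lands in $Z$ (push the problem to $g|_Z$, then to $f|_E$), and handle the non-descending case via Proposition \ref{prop:birationalbackbone} together with the pre-periodic point on $Z$. Your explicit height bookkeeping in the ``orbit stays off $Z$'' subcase replaces the paper's one-line citation of the birational invariance of the arithmetic degree, but the content is the same.

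Where you go astray is your treatment of the ``main obstacle'': you hedge that the transverse eigendivisor being nef ``is not automatic'' and needs int-amplified MMP input via Theorem \ref{thm:MengZhangMMP}. That machinery is neither assumed in the hypotheses nor needed. The proposition explicitly assumes $\Nef(X)$ is finitely generated; since $f^*$ preserves $\Nef(X)$ and hence permutes its finitely many extremal rays, an iterate $f^m$ fixes each ray. The ray generators are then nef eigenvectors, and because $\Nef(X)$ is full-dimensional in $N^1(X)_\RR$ they span, so one can extract a basis of nef eigenvectors for $(f^m)^*$; consequently \emph{every} eigenvalue of $(f^m)^*$, including the transverse one, has a nef eigendivisor, and Proposition \ref{prop:iterationprop} lets you pass freely between $f$ and $f^m$. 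This is the first sentence of the paper's proof and should replace your appeal to the int-amplified setting; with that correction your argument matches the paper's.
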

\begin{proof}
	After iterating $f$ we may assume that $f^*$ fixes the rays of the nef cone and so every eigenvalue has a nef eigendivisor. Let $\mu$ be realizable as a potential arithmetic degree of $g$. Choose a point $P$ so that $\alpha_g(P)=\vert \mu\vert$. If $g^n(P)\notin Z$ for all $n$ then by the birational invariance of arithmetic degrees (\cite[Lemma 2.4]{1802.07388}) we have that $\vert \mu\vert= \alpha_g(P)=\alpha_f(\phi^{-1}(P))$ as needed. So we may assume that $P\in Z$. Therefore $\mu$ is a potential arithmetic degree of $g\mid_Z$ and so a potential arithmetic degree of $f\mid_E$ which by assumption is realizable. Now let $\lambda$ be a potential arithmetic degree of $f$ that is not a potential arithmetic degree of $g$. Then by Proposition \ref{prop:birationalbackbone} we have that $\lambda$ is an arithmetic degree. 
\end{proof}

\begin{corollary}
	In the situation of the above proposition Proposition \ref{prop:intprop1} if $f\colon E\ra E$ is such that $f^*$ acts by a dilation on $N^1(E)_\RR$ and every potential arithmetic degree of $g\colon Y\ra Y$ is an arithmetic degree then every potential arithmetic degree of $f$ is realized.
\end{corollary}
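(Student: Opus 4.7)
The plan is to observe that this corollary is essentially a direct application of Proposition \ref{prop:intprop1}, with the dilation hypothesis doing the work of verifying one of the assumptions. Concretely, I would reduce the statement to checking the three hypotheses needed to invoke Proposition \ref{prop:intprop1}: (a) every potential arithmetic degree of $f|_E$ is realized, (b) every potential arithmetic degree of $g\colon Y \ra Y$ is realized, and (c) $g\colon Z \ra Z$ has a pre-periodic point.

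For step (a), I would apply Proposition \ref{prop:dialprop} to $f|_E \colon E \ra E$. Since by hypothesis $(f|_E)^*$ acts on $N^1(E)_\RR$ by multiplication by a scalar $\lambda$, that proposition yields immediately that $f|_E$ has arithmetic eigenvalues: either $|\lambda| = 1$, in which case there are no potential arithmetic degrees at all, or $|\lambda| > 1$, in which case Theorem \ref{theorem:MSS} produces a point $P \in E(\overline{\QQ})$ with $\alpha_{f|_E}(P) = |\lambda|$ realizing the unique potential arithmetic degree. Either way, every potential arithmetic degree of $f|_E$ is realized.

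Steps (b) and (c) are built into the ``situation of Proposition \ref{prop:intprop1}'': (b) is stated explicitly as a hypothesis of the corollary, while (c) is an assumption inherited from the setup of that proposition. With all three hypotheses in place, Proposition \ref{prop:intprop1} applies and concludes that every potential arithmetic degree of $f$ is realizable.

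There is no real obstacle here beyond bookkeeping; the content of the corollary is just that the dilation hypothesis is a clean geometric criterion that, via Proposition \ref{prop:dialprop}, automatically supplies the arithmetic realizability on $E$ that Proposition \ref{prop:intprop1} requires. In particular, the dilation assumption is a convenient replacement for the ad hoc condition ``every potential arithmetic degree of $f|_E$ is realized,'' which is the only nontrivial hypothesis of Proposition \ref{prop:intprop1} that is not assumed directly in the corollary.
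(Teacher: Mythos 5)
Your proof is correct and follows the same route as the paper: reduce to checking hypothesis (a) of Proposition \ref{prop:intprop1} and dispose of it using the dilation assumption. The paper's own proof is a single sentence (``if $f\colon E\to E$ is polarized, then every potential arithmetic degree is realizable, as there is a potential arithmetic degree''), which tacitly assumes the dilation factor has modulus $>1$; your version is slightly more careful, routing through Proposition \ref{prop:dialprop} so that the case where the scalar has modulus $1$ (no potential arithmetic degrees, condition vacuous) is handled explicitly rather than left implicit. Both treat the pre-periodic point condition on $g\colon Z\to Z$ as part of the inherited ``situation,'' which is the right reading since Proposition \ref{prop:birationalbackbone} requires a fixed point on $Z$. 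No gap.
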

\begin{proof}
	This is immediate as if $f\colon E\ra E$ is polarized, then every potential arithmetic degree realizable, as there is a potential arithmetic degree.	
\end{proof}

Because of the importance we give these eigenvalues a name.

\begin{definition}
	Let $f\colon X\ra X$ be a surjective endomorphism and let $\phi\colon X\ra Y$ be a birational morphism. Let $g\colon Y\ra Y$ be a surjective endomorphism with 
	\[\xymatrix{X\ar[r]^f\ar[d]_\phi & X\ar[d]^\phi \\ Y\ar[r]_g & Y}\]
	Let $E$ be the exceptional locus of $\phi$ and let $Z=\phi(E)$. Suppose we have a second diagram
	\[\xymatrix{E\ar[r]^f\ar[d]_\phi & E\ar[d]^\phi \\Z\ar[r]_g & Z}\] 
	We call the eigenvalues of $f\colon E\ra E$ the exceptional eigenvalues of $f$ with respect to $\phi$.
\end{definition}
The above results say that if we know what happens with the exceptional eigenvalues, then the problem of realizability is translated along a birational extremal contraction. If we imagine attempting to run the minimal model program to simplify the situation, this would correspond to a divisorial contraction. To do the minimal model program one must also consider the situation of flips. This is where we use \cite[Theorem 5.3]{1908.11537} to transfer dynamical information between the flips.
\begin{lemma}
	Let $X$ be $\QQ$-factorial projective variety with at worst terminal singularties and finitely generated nef cone. Suppose that $X$ that admits an int-amplified endomorphism and $\Alb(X)=0$. Take $f\colon X\ra X$ a surjective morphism and let $\phi\colon X\ra Y$ be a flipping extremal contraction with $\phi^+\colon X^+\ra Y$ the associated flip. Let $g\colon Y\ra Y$ be a surjective endomorphism with $\phi\circ f=g\circ\phi$. We let $E$ be the exceptional locus of $\phi$. In addition assume the following.
	\begin{enumerate}
		\item Every potential arithmetic degree of $f\colon E\ra E$ is an arithmetic degree.
		\item Put $Z=\phi(E)$. We assume that $g\colon Z\ra Z$ and that this morphism has a pre-periodic point. 
		\item We assume that every potential arithmetic degree on $X^+$ is an arithmetic degree. 
	\end{enumerate}
	Then every potential arithmetic degree of $X$ is realized as an arithmetic degree.
	
\end{lemma}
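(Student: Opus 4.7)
The approach is to imitate Proposition~\ref{prop:intprop1}, with the flip $\psi\colon X\dashrightarrow X^+$ playing the role of the divisorial contraction. The key feature of a flip is that it is an isomorphism in codimension one, so the N\'eron--Severi spaces of $X$ and $X^+$ are canonically identified and the dynamics on the two models agree away from the exceptional loci.

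First, by the int-amplified hypothesis and Theorem~\ref{thm:MengZhangMMP}(1) (equivalently \cite[Theorem 5.3]{1908.11537}), after replacing $f$ by a sufficiently high iterate there is a surjective endomorphism $f^+\colon X^+\to X^+$ with $\psi\circ f=f^+\circ\psi$ (as rational maps) and $\phi^+\circ f^+=g\circ\phi^+$; Proposition~\ref{prop:iterationprop} justifies the iteration. After iterating once more, $E$, $E^+$, and $Z$ are respectively $f$-, $f^+$-, and $g$-invariant, and the pre-periodic point from assumption~(2) is a genuine fixed point $Q_0\in Z$ of $g$. Because $\psi$ is small, the strict-transform map $\psi_\ast\colon N^1(X)_\RR\cong N^1(X^+)_\RR$ intertwines $f^\ast$ and $(f^+)^\ast$, so the two endomorphisms have identical potential arithmetic degrees.

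Now fix a potential arithmetic degree $\lambda$ of $f$. By assumption~(3) there is $P^+\in X^+(\overline{\QQ})$ with $\alpha_{f^+}(P^+)=|\lambda|$. If the forward $f^+$-orbit of $P^+$ avoids $E^+$, then $\psi^{-1}$ is regular along the orbit and the birational invariance of arithmetic degrees \cite[Lemma 2.4]{1802.07388} gives $\alpha_f(\psi^{-1}(P^+))=|\lambda|$; otherwise, iterating $P^+$ forward I may assume $P^+\in E^+$. Choose a nef eigendivisor $D\in N^1(X)_\RR$ for $\lambda$, available after one further iteration since $\Nef(X)$ is finitely generated. If $D\notin \phi^\ast N^1(Y)_\RR$, then $D|_E\not\equiv 0$ and $(f|_E)^\ast(D|_E)=\lambda\,(D|_E)$, so $\lambda$ is a potential arithmetic degree of $f|_E$; assumption~(1) furnishes $P\in E(\overline{\QQ})$ with $\alpha_{f|_E}(P)=|\lambda|$, and a canonical-height comparison combined with Theorem~\ref{theorem:KS2} forces $\alpha_f(P)=|\lambda|$.

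The remaining sub-case, in which $D=\phi^\ast D'$ with $g^\ast D'=\lambda D'$, is the principal obstacle. The identity $\hat h_D=\hat h_{D'}\circ\phi+O(1)$ forces $\hat h_D$ to vanish on every fiber of $\phi$, so the fibering-backbone method of Proposition~\ref{prop:fiberingbackbone} applied to the $f$-invariant fiber $F_0=\phi^{-1}(Q_0)\subset E$ yields no candidate point. Moreover the realizing point $P^+\in E^+$ produced by assumption~(3) satisfies $\phi^+(P^+)\in Z$, so its image under $\phi^+$ cannot be pulled back through $\phi$ to produce a point of $X\setminus E$ realizing $\lambda$ for $g$ and hence for $f$. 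To close this case I would combine assumption~(1) with a refinement of the canonical-height comparison that uses the Meng--Zhang equivariance of the small modification $\psi$ to transfer $\lambda$-realizing points between $F_0^+:=(\phi^+)^{-1}(Q_0)\subseteq E^+$ and $F_0$; locating an appropriate eigendivisor on $E$ that detects $\lambda$ through this transfer, despite the triviality of $\hat h_D$ along $\phi$-fibers, is the technical core of the proof.
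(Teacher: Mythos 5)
There is a genuine gap, and you flag it yourself: you reduce the problem to the sub-case in which the only nef $\lambda$-eigendivisor on $X$ is a pullback $D=\phi^*D'$, and you then stop, describing a vague strategy of ``refining the canonical-height comparison'' and ``locating an appropriate eigendivisor on $E$'' without carrying it out. This is precisely the step the lemma's hypotheses are designed to settle, so leaving it open means the proof is incomplete.

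The paper closes this case by a different, and decisively simpler, case split. Rather than asking whether the chosen eigendivisor $D$ lies in $\phi^*N^1(Y)_\RR$, the paper observes that once the realizing point $P^+$ furnished by hypothesis (3) is placed inside $E^+$ (after iterating into the invariant locus, exactly as you do), $\alpha_{f^+}(P^+)=|\lambda|$ computed on $E^+$ shows that $\lambda$ is a potential arithmetic degree of $f^+|_{E^+}$. One then splits on whether $\lambda$ is a potential arithmetic degree of $g|_Z\colon Z\to Z$. If it is, the surjection $\phi|_E\colon E\to Z$ gives an injective pullback $N^1(Z)_\RR\hookrightarrow N^1(E)_\RR$ intertwining $(g|_Z)^*$ and $(f|_E)^*$; pulling back the nef eigendivisor from $Z$ exhibits $\lambda$ as a potential arithmetic degree of $f|_E$, so hypothesis (1) produces the realizing point outright, with no need to restrict $D$ to $E$ or to work with $\hat h_D$ at all. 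If it is not, the paper invokes Proposition \ref{prop:birationalbackbone} applied to $\phi$ and the fixed point on $Z$ from hypothesis (2). The idea you are missing is that hypothesis (1) is meant to be fed eigenvalues coming \emph{up} from $Z$ via $\phi|_E^*$, not eigenvalues coming \emph{down} by restricting $D$ from $X$; the latter can fail (your claim that $D\notin\phi^*N^1(Y)_\RR$ forces $D|_E\not\equiv 0$ is itself unjustified, since the kernel of restriction to $E$ need not coincide with $\phi^*N^1(Y)_\RR$), whereas the former is automatic whenever $\lambda$ survives on $Z$.
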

\begin{proof}
	Let $\psi\colon X\dashrightarrow X^+$ be the associated birational map. By (\cite[Theorem 5.3]{1908.11537}) we have a diagram
	\[\xymatrix{X\ar[r]^\psi\ar[d]_f & X^+\ar[d]^{f^+}\\ X\ar[r]_\psi & X^+}\]
	where $f^+$ is a everywhere defined morphism.  Since $E$ has codimension two we note that $f^*$ and $(f^+)^*$ have eigenvalues of the same magnitude, so the potential arithmetic degrees of both $f$ and $f^+$ coincide. Let $\lambda$ be a potential arithmetic degree of $f$ and suppose that there is a point $P^+\in X^+$ with $\alpha_{f^+}(P^+)=\vert \lambda\vert $. If $(f^+)^n(P^+)\notin E^+$ for all $n$ then by the birational invariance of the arithmetic degree (\cite[Lemma 2.4]{1802.07388}) we have that $\alpha_f(\psi^{-1}(P^+))=\vert \lambda\vert $ as needed.  We may assume that for all $P\in X^+\setminus E^+$ that $\alpha_f(P)\neq\vert \lambda\vert$. So we may assume that $P^+\in E^+$ and so $\lambda$ is a potential arithmetic degree of $f^+\colon E^+\ra E^+$. If $\lambda$ is a potential arithmetic degree of $g\colon Z\ra Z$ then we are done by assumption because then $\lambda$ is a potential arithmetic degree of $E$. Therefore $\lambda$ is not a potential arithmetic degree of $g$. By Proposition \ref{prop:birationalbackbone} we have that $\lambda$ is realizable as an arithmetic degree. 
	\end{proof}
The above result shows that while flipping the exceptional eigenvalues remain the same, and that they are the issue that prohibits a reduction to mori fiber spaces. We now give an example of how these ideas can be used in practice to prove realizability results. We first treat the case of rationally connected surfaces with $\textnormal{Alb}(X)=0$ that admit an int amplified endomorphism. 
\begin{theorem}\label{thm:mainintamp1}
	Let $X$ be a normal $\QQ$-factorial surface with at worst terminal singularities and finitely generated nef cone that is rationally connected over $\overline{\QQ}$ and $\Alb(X)=0$ that admits an int-amplified endomorphism. Let $f\colon X\ra X$ be a surjective endomorphism and $\lambda$ a potential arithmetic degree of $f$. Then $\lambda$ is realizable as an arithmetic degree.  
\end{theorem}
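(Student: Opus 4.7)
The plan is to induct on $\rho(X)$, using the equivariant MMP of Meng--Zhang (Theorem \ref{thm:MengZhangMMP}) to peel off $(-1)$-curves one at a time until reaching a toric minimal model, where Theorem \ref{thm:toricrealworks} already handles the result. Throughout, I exploit Proposition \ref{prop:iterationprop} to replace $f$ by any convenient iterate. Since $X$ has at worst terminal singularities in dimension two, $X$ is in fact smooth.

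For the base of the induction, suppose $X$ is itself a minimal model. Because $X$ is rationally connected with $\Alb(X)=0$, its Kodaira dimension is $-\infty$, so classically $X$ is isomorphic to $\PP^2$ or to a Hirzebruch surface $\McH_n$ (including $\PP^1\times\PP^1=\McH_0$). In every case $X$ is a smooth $\QQ$-factorial projective toric variety, so $f$ has arithmetic eigenvalues by Theorem \ref{thm:toricrealworks}.

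For the inductive step, assume $X$ is not minimal. There are no flips in dimension two, and a fibering $K_X$-negative extremal contraction would force $X$ to already be a Mori fiber space (hence the minimal case just handled), so I may fix a $K_X$-negative divisorial extremal contraction $\phi\colon X\to Y$. By Castelnuovo, $\phi$ blows down a $(-1)$-curve $E\cong \PP^1$ to a point $Q\in Y$, and $Y$ is again a smooth projective surface with $\rho(Y)=\rho(X)-1$. Theorem \ref{thm:MengZhangMMP} yields, after iterating $f$, a surjective endomorphism $g\colon Y\to Y$ with $\phi\circ f=g\circ\phi$, and Theorem \ref{thm:propertiesofintam} ensures $Y$ still admits an int-amplified endomorphism, while rational connectedness, $\Alb(Y)=0$, $\QQ$-factoriality, terminal singularities, and finite generation of the nef cone all descend along the blowdown. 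The inductive hypothesis applied to $Y$ then guarantees that $g$ has arithmetic eigenvalues.

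The endgame is to lift realizability from $Y$ to $X$ via Proposition \ref{prop:intprop1}. The hypotheses check out readily: after iteration $f$ fixes the extremal ray of $\phi$, so $f(E)\subseteq E$ and $f|_E\colon\PP^1\to\PP^1$ is a surjective endomorphism; since $\dim N^1(\PP^1)_\RR=1$, the pullback $(f|_E)^*$ acts by a scalar, so Proposition \ref{prop:dialprop} gives that $f|_E$ has arithmetic eigenvalues. The image $Z=\phi(E)$ is the single point $Q$, and the intertwining $\phi\circ f=g\circ\phi$ forces $g(Q)=Q$, so $g|_Z$ trivially has a fixed point. All hypotheses of Proposition \ref{prop:intprop1} are met and the induction closes. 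The main technical obstacle will be verifying that the auxiliary hypotheses propagate cleanly to $Y$ under the divisorial contraction; once that is in hand, the rest is essentially automatic because a divisorial contraction on a smooth surface has exceptional locus $\PP^1$ and center a single point, where the relevant exceptional dynamics is manifestly controlled.
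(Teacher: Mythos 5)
Your proof is correct and follows the same inductive skeleton as the paper's: induct on $\rho(X)$, use a $K_X$-negative divisorial contraction to descend to a surface of smaller Picard number, and handle the terminal (relatively minimal) case separately. The genuine divergence is in how that terminal case is dispatched. You observe that a smooth rationally connected surface with no $(-1)$-curves is $\PP^2$ or a Hirzebruch surface $\McH_n$ with $n\neq 1$, hence $\QQ$-factorial toric, and invoke Theorem \ref{thm:toricrealworks} directly. The paper instead argues on the Mori fiber space $\phi\colon X\to\PP^1$ by hand, splitting into the cases $\lambda_1(f)=\lambda_1(g)$ (where \ref{theorem:MSS} realizes the top eigenvalue and \ref{prop:fiberingbackbone} realizes the fiber eigenvalue over a fixed point of $g$ on $\PP^1$) and $\lambda_1(f)>\lambda_1(g)$, the latter requiring \cite[Theorem 5.2]{1908.01605} to produce a second fibration $\psi\colon X\to\PP^1$ whose induced endomorphism has dynamical degree $\lambda_1(f)$. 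Your route sidesteps that external result and the case split entirely, at the cost of leaning on the full strength of the toric theorem; since \ref{thm:toricrealworks} is proved earlier in the paper and independently of this theorem, there is no circularity. For the inductive step you package the argument through Proposition \ref{prop:intprop1} rather than applying \ref{prop:birationalbackbone} directly as the paper does; these are interchangeable here because the exceptional curve $E\cong\PP^1$ has one-dimensional $N^1$, so $f|_E$ is trivially polarized, and $Z=\phi(E)$ is a single fixed point of $g$. Two small points worth tightening: you should exclude $\McH_1$ from the ``minimal'' list since it carries a $(-1)$-curve (harmless, as $\McH_1$ is toric and in any case falls into the inductive step), and the remark that ``a fibering contraction would force $X$ to be a Mori fiber space, hence minimal'' conflates two notions, since $\McH_1$ is simultaneously a Mori fiber space over $\PP^1$ and non-minimal; the clean dichotomy you want is simply ``$X$ has a $(-1)$-curve'' versus ``$X$ does not.'' The descent of hypotheses (smoothness, $\Alb=0$, rational connectedness, finite generation of the nef cone via $\phi^*\Nef(Y)=\Nef(X)\cap\phi^*N^1(Y)_\RR$, and the int-amplified endomorphism via Theorem \ref{thm:propertiesofintam}) all check out as you anticipated.
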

\begin{proof}
	We proceed by induction on the Picard number $\rho=\rho(X)$. When $\rho=1$ the result is true. When $\rho>1$ suppose that $X$ admits a $K_X$-negative extremal contraction that is divisorial. After iterating $f$  (\cite[Theorem 5.3]{1908.11537}) we have a diagram
	\[\xymatrix{X\ar[r]_f\ar[d]_\phi & X\ar[d]^\phi\\ Y\ar[r]_g & Y }\]
	The exceptional locus is now an irreducible curve $E$ that is contracted by $\phi$ and $Z=\phi(E)$ is a point say $Q$ that is fixed by $g$. Thus by Proposition \ref{prop:birationalbackbone} we have that any potential arithmetic degree of $f$ that is not a potential arithmetic degree of $g$ is realized. By induction every potential arithmetic degree of $g$ is realized and the birational invariance of the arithmetic degree gives the required result. So we may suppose that every $K_X$-negative extremal contraction of $X$ is of fibering type as flips do not occur for surfaces because of codimension reasons. We may now assume the existence of a diagram
	\[\xymatrix{X\ar[r]_f\ar[d]_\phi & X\ar[d]^\phi\\ Y\ar[r]_g & Y }\]  
	where $\phi$ is fibering and $Y$ is a curve. By assumption we have no non-trivial morphism to an elliptic curve or an abelian variety. So we must have that $Y$ is a $\QQ$-factorial and normal curve that is not of general type or an elliptic curve. It follows that $Y=\PP^1$ as a curve of general type admits a morphism to its Jacobian.  The Picard number of $X$ is now $2$. If $\lambda_1(f)=\lambda_1(g)$ then $\lambda_1(f)$ can be realized as an arithmetic degree by Theorem \ref{theorem:MSS}. On the other hand, if $\mu$ is a second potential arithmetic degree in this situation with $\vert \mu\vert <\lambda_1(f)$ then as $g$ has a fixed point, being an endomorphism of $\PP^1$ and $\mu$ is realizable by $\ref{prop:fiberingbackbone}$. Thus we may assume that $\lambda_1(f)>\lambda_1(g)$. In this case \cite[Theorem 5.2]{1908.01605} gives the existence of a morphism $\psi\colon X\ra \PP^1$ and a conjugating morphism $h\colon \PP^1\ra \PP^1$ such that 
	\[\xymatrix{X\ar[r]_f\ar[d]_\psi & X\ar[d]^\psi\\ \PP^1\ar[r]_h & \PP^1 }\] 
	commutes and $\lambda_1(h)=\lambda_1(f)$. The argument just given shows that any other potential arithmetic degree is realizable. 
\end{proof}
\end{section}

\bibliographystyle{plain}
\bibliography{bib}
\end{document}